\newtheorem{thm}{Theorem}[section]
\newtheorem{cor}[thm]{Corollary}
\newtheorem{lem}[thm]{Lemma}
\newtheorem{lemma}[thm]{Lemma}
\newtheorem{prop}[thm]{Proposition}
\newtheorem{proposition}[thm]{Proposition}
\newcommand\alp{\alpha}     
\newcommand\gam{\gamma}     
\newcommand\del{\delta}     \newcommand\Del{\Delta}
\newcommand\lam{\lambda}        \newcommand\Lam{\Lambda}
\newcommand\calB{{\mathcal{B}}}
\newcommand\calD{{\mathcal{D}}}
\newcommand\calE{{\mathcal{E}}}
\newcommand\calF{{\mathcal{F}}}
\newcommand\calH{{\mathcal{H}}}
\newcommand\calI{{\mathcal{I}}}
\newcommand\calM{{\mathcal{M}}}
\newcommand\calN{{\mathcal{N}}}
\newcommand\calO{{\mathcal{O}}}
\newcommand\calP{{\mathcal{P}}}
\newcommand\calS{{\mathcal{S}}}
\newcommand\calT{{\mathcal{T}}}
\newcommand\calV{{\mathcal{V}}}
\newcommand\calX{{\mathcal{X}}}
        \newcommand\bfG{{\mathbf G}}
        \newcommand\bfH{{\mathbf H}}
        \newcommand\bfM{{\mathbf M}}
\newcommand\QQ{\mathbb{Q}}
\newcommand\TT{\mathbb{T}}
\newcommand\PP{\mathbb{P}}
\renewcommand\AA{\mathbb{A}}
\newcommand\GG{\mathbb{G}}
\newcommand\ZZ{\mathbb{Z}}
\newcommand\CC{\mathbb{C}}
\newcommand\BB{\mathbb{B}}
 \newcommand\grb{{\mathfrak{b}}}
 \newcommand\grg{{\mathfrak{g}}}
 \newcommand\grt{{\mathfrak{t}}}
\newcommand\sdp{\times \hskip -0.3em {\raise 0.3ex
\hbox{$\scriptscriptstyle |$}}} % semidirect product
\newcommand\Aut{\operatorname{Aut}}
\newcommand\End{\operatorname{End\,}}
\newcommand\Hom{\operatorname {Hom}}
\newcommand\Sym{\operatorname{Sym}}
\newcommand\oM{{\overline{M}}}
\newcommand\os{{\overline{s}}}
\newcommand\ow{{\overline{w}}}
\newcommand\oW{{\overline{W}}}
\newcommand\hatB{{\widehat{B}}}
\newcommand\tilM{{\widetilde{M}}}
\newcommand\tils{{\widetilde{s}}}
\newcommand\tilw{{\widetilde{w}}}
\newcommand\Tdbar{\overline{T}^{\vee}}
\newcommand\x{\times}
\newcommand{\Mbar}{\overline{M}}
\newcommand\tcS{\widetilde{\calS}}
\renewcommand\Sym{\operatorname{Sym}}
\newcommand\St{\mathbf{St}}
\newcommand\Coh{\operatorname{Coh}}
\newcommand\reg{\operatorname{reg}}
\renewcommand\lg{\operatorname{lg}}
\newcommand\sh{\operatorname{sh}}
\renewcommand\AA{\mathbb A}
\newcommand\ocM{\overline{\calM}}
\newcommand\CM{{\mathcal C}{\mathcal M}}
\newcommand\ev{\operatorname{ev}}
\newcommand\onabla{\overline{\nabla}}
\newcommand\tcH{\widetilde{\calH}}
\renewcommand\BB{\calB}
\newcommand\bT{\mathbf{T}}
\newcommand\bS{\mathbf{S}}
\newcommand\cL{\mathcal{L}}
\newcommand\Td{T^{\vee}} %the dual of the torus T
\newcommand\Ba{B_{\widehat{W}}}  %affine braid group
\newcommand\Pic{\operatorname{Pic}}
\newcommand\Hilb{\operatorname{Hilb}}
\newcommand\Lie{\operatorname{Lie}}
\newcommand\Kb{\overline{\mathcal{K}}}
\newcommand\bG{\mathbf{G}}
\newcommand\dR{H^2_\textup{deRham}}
\newcommand\egt{\mathbf{t}}
\newcommand\IC{\operatorname{IC}}
\newcommand\Spr{\operatorname{Spr}}
\newcommand\tX{\widetilde{X}}
\begin{document}

\title{Quantum cohomology of the Springer resolution}
\author{Alexander Braverman, Davesh Maulik and Andrei Okounkov}
\begin{abstract}
Let $G$ denote a complex, semisimple, simply-connected group and
$\calB$ its associated flag variety.  We identify the equivariant
quantum differential equation for the cotangent bundle
$T^*\calB$ with the affine Knizhnik-Zamolodchikov connection of
Cherednik and Matsuo. This recovers Kim's description of quantum
cohomology of $\calB$ as a limiting case. A parallel result is
proven for resolutions of the Slodowy slices. Extension to arbitrary
symplectic resolutions is discussed.
\end{abstract}
\maketitle
%----------------------------------------------------------------------------------------------------------
\section{Introduction}
%--------------------------------------------------------------------------------------------------

%---------------------------------------------------------------------------------------------------------------

The main goal of this paper is to study the quantum cohomology of the Springer resolution associated to a semisimple algebraic group $G$.  Since many of the ideas and constructions here apply to arbitrary symplectic resolutions, much of the introduction will discuss this more general context.

\subsection{Geometry of symplectic resolutions}\label{geom_sympl}
Recall from \cite{Kaledin2} that
a smooth algebraic variety $X$ with a holomorphic symplectic form $\omega\in H^0(X,\Omega^2)$
is called a \emph{symplectic resolution} if the canonical map
\begin{equation}
X \to X_0=\textup{Spec} H^0(X,\mathcal{O}_X)\label{defpi}
\end{equation}
is projective and birational.

The deformations of the pair
$(X,\omega)$ are classified by the image $[\omega]$
of the symplectic form in $\dR(X)$. In many important examples,
the universal families

\begin{equation}\label{deform}
\xymatrix{
X \ar@{^{(}->}[r]\ar[d]& \tX \ar[d]_\phi \\
[\omega] \ar@{^{(}->}[r] & H^2(X,\CC) \\
}
\end{equation}
may be described explicitly, see \cite{Kaledin2} for further discussion.
The generic fiber of \eqref{deform} is affine. Fibers
with algebraic cycles $\alpha^\vee \in H_2(X,\ZZ)$
lie over hyperplanes
\begin{equation}
 (\omega,\alpha^\vee)= \int_{\alpha^\vee} \omega = 0
\label{hyperpl}
\end{equation}
in the base. Primitive effective classes $\alpha^\vee$ as above
are called \emph{primitive coroots} by the analogy with the following
prominent example.

\subsection{Grothendieck resolution}\label{sGroth}
Let $G$ be a complex, semi-simple  simply connected
algebraic group with Lie algebra $\grg$. Let $\calB$ be the flag
variety of $G$ that parametrizes, among other things, Borel
subalgebras $\grb\subset\grg$. Consider
$$
\tX = \{(\grb,\xi),\, \xi \in [\grb,\grb]^\perp\} \subset \calB\times\grg^*  \,,
$$
and the map
\begin{equation}
\tX \owns (\grb,\xi) \overset\phi\longrightarrow
\xi \big|_{\grb/[\grb,\grb]} \in \grt^*\,, \label{GrothPhi}
\end{equation}
where $\grt$ is the Lie algebra of a maximal torus $T\subset G$.

The generic fiber of $\phi$ is a coadjoint orbit of $G$ with its
canonical Kirillov-Kostant form $\omega$, while
$$
X= \phi^{-1}(0) \cong T^*\calB
$$
with the canonical exact symplectic form of a cotangent bundle.
The map
\begin{equation}
  \tX \owns (\grb,\xi)\longrightarrow \xi \in \grg^*
\label{Groth}
\end{equation}
takes $X$ to the nilpotent cone $\calN\subset\grg^*$. This is known
as the \emph{Springer resolution} of $\calN$, while \eqref{Groth}
is referred to as the Grothendieck simultaneous resolution.

Fix a Borel subgroup $B\subset G$ with a maximal torus $T$.
A weight $\lambda\in \Hom(B,\CC^*)$ defines an equivariant
line bundle
$$
\calO(\lambda)=G \times_{B} \CC_{\lambda}
$$
on $\calB$ and, by pullback, on $X$. The map
$$
\lambda \mapsto D_\lambda = c_1(\calO(\lambda))
$$
yields an isomorphism
\begin{equation}
  \Lambda = \Hom(T,\CC^*) \cong H^2(X,\ZZ)
\label{Lam}
\end{equation}
Applying $\otimes_\ZZ\CC$ to  \eqref{Lam}, we get
an isomorphism $\grt^* \cong H^2(X,\CC)$ that
makes
\begin{equation}\label{GrothPhi2}
\xymatrix{
T^*\calB \ar@{^{(}->}[r]\ar[d]& \tX \ar[d]_\phi \\
0 \ar@{^{(}->}[r] & \grt^* \\
}
\end{equation}
an instance of the diagram \eqref{deform} with
$\phi$ as in \eqref{GrothPhi}.

We will denote by $R, R_+,$ and $\Pi$ the set of roots, positive roots, and simple roots respectively.
A positive coroot $\alpha^\vee$
defines an $SL_2$-subgroup $G_{\alpha^\vee} \subset G$
and hence a rational curve
$$
G_{\alpha^\vee} \cdot [B] \subset \calB \subset X\,.
$$
Since $G$ is simply-connected, coroots generate the lattice
\begin{equation}
  \Lambda^\vee = \Hom(\CC^*,T) \cong H_2(X,\ZZ) \,,
\label{Lamv}
\end{equation}
in which the positive ones generate the effective cone.
It is well-known that this classical notion of a positive
coroot agrees with the definition given in Section \ref{geom_sympl}.

\subsection{Equivariant symplectic resolutions}

Our interest lies in enumerative invariants of curves in $X$,
particularly, in quantum cohomology of $X$. Since a symplectic resolution $X$ may be deformed to an affine variety,
most Gromov-Witten invariants of $X$ trivially vanish.
However, \emph{equivariant} Gromov-Witten invariants of $X$ may
be very interesting.

In what follows,
we assume that a connected reductive algebraic group $\bG$ acts on $X$ so
that:
\begin{enumerate}
\item $X^g$ is proper for some $g\in\bG$;
\item  $\bG$ scales $\omega$
by a \emph{nontrivial character}.
\end{enumerate}
The second property implies $X$ does not have any
$\bG$-equivariant symplectic deformations since any connected group
acts trivially on $H^2(X)$. See \cite{Kaledin3}
for general conjectures concerning the existence of
scaling actions on $X$.

For the
Springer resolution $X=T^*\calB$, we take
$$
\bG = G \times \CC^*
$$
where the second factors acts by scalar operators on $\grg^*$, $\grt^*$,
and the cotangent fibers. While all constructions of Section \ref{sGroth} are
$\bG$-equivariant, note that $\bG$ acts nontrivially on the base
of the deformation with a unique fixed point $0\in\grt^*$.

Other examples of equivariant symplectic resolution include:
cotangent bundles of homogeneous spaces, as well as Hilbert schemes of
points and more general moduli of sheaves on symplectic
surfaces. The largest class of examples comes from
Nakajima quiver varieties, see
\cite{ginzl} for an introduction. For a general symplectic
resolution, we denote by $G\subset\bG$ the stabilizer of $\omega$.

\subsection{Goal of the project}

There exist powerful heuristic as well as proven principles
that organize Gromov-Witten invariants of algebraic varieties
into certain beautiful and powerful structures. This is loosely
known as \emph{mirror symmetry},  see e.g.\ \cite{CoxKatz,Gross,mirror} for an
introduction. We believe for symplectic resolutions, these
structures recover and generalize some classical
notions of geometric representation theory.

To see this in full generality is the goal of
a large joint project which we pursue with R.~Bezrukavnikov, P.~Etingof,
V.~Toledano-Laredo, and
others. It has also many points of contact with the ongoing work of
N.~Nekrasov and S.~Shatashvili \cite{NekSh}.

This paper may be seen as a first step, in which
we explain what these structures mean
 for the most classical symplectic
resolution of all --- the  Springer resolution.
This will be reflected in
the paper's structure: the reader will notice that the generalities take up much more
space than the actual geometric computation of quantum multiplication
in Section \ref{proofoftheorem}.

\subsection{Quantum multiplication}\label{quantum-int}

By definition, the operator of quantum multiplication by
$\alpha\in H^*_\bG(X)$ has the following matrix elements
\begin{equation}
  (\alpha\circ\gamma_{1},\gamma_2) =
\sum_{\beta \in H_{2}(X,\ZZ)}
q^{\beta}\,
\langle \alpha, \gamma_{1}, \gamma_{2}\rangle^{X}_{0,3,\beta}
\,,\label{qmult}
\end{equation}
where $(\cdot\,,\,\cdot)$ denotes the standard inner product on cohomology and the
quantity in angle brackets is a $3$-point, genus $0$, degree $\beta$ equivariant
Gromov-Witten invariant of $X$, see Section \ref{GWprelim}.
It is a very general fact
that \eqref{qmult} defines a commutative associative deformation of the algebra $H^*_{\bG}(X)$, see e.g.\ \cite{CoxKatz}.

We denote by $T^\vee$ the dual torus, for which the roles of
\eqref{Lam} and \eqref{Lamv} are reversed. Each monomial $q^\beta$
is naturally a function on $T^\vee$. In fact, the series \eqref{qmult}
will be shown to be rational, that is, to lie in $\CC(T^\vee)$.
Parallel rationality is expected for arbitrary symplectic resolutions.

It is well-known that $H^*(\calB)$ is generated by divisors and, for
an arbitrary symplectic resolution $X$, one may hope that the
quantum cohomology is generated by divisors even if the classical
cohomology is not.

The main result of the paper is a computation of operators of
quantum multiplication by divisors for the Springer resolution.
Before stating it, we discuss the general structure of the answer.

\subsection{Correspondence algebra and root subalgebras}\label{rootsubalgebra}

Consider the fiber product
\begin{equation}
X\times_{X_0} X \subset X \times X\label{XX} \,.
\end{equation}
It is known that all components of \eqref{XX}
have dimension
$\le \dim X$ and those of dimension $\dim X$ are
Lagrangian, see \cite{ginzl}.
Lagrangian components of \eqref{XX} act by correspondences on the
Borel-Moore homology of $X$.  We
denote by
$\mathrm{Corr}(X) \subset \End(H^*(X))$ the subalgebra that they generate, known as the \emph{correspondence algebra}.  We will show that the quantum contribution to divisor multiplication will lie in $\mathrm{Corr}(X)$.  More precisely, we establish the following refined statement in section \ref{proofoftheorem}.

Let $X^\alpha$ be a generic fiber of \eqref{deform} over the
hyperplane where $\alpha^\vee$ is algebraic. Then we can again consider the fiber product
\begin{equation}
  X^\alpha \times_{X^\alpha_0} X^\alpha \subset
X^\alpha \times X^\alpha \label{XaXa} \,.
\end{equation}
Since $X^\alpha \sim X$ as a topological $G$-space,
Lagrangian components of \eqref{XaXa} act by
correspondences in the Borel-Moore homology of $X$.
We denote $\bS^\alpha \subset \mathrm{Corr}(X)$ the
subalgebra that they generate.

Let $D\subset H^2(X)$ be a divisor. By an abstract
argument, we show the operator  $D\circ$ of quantum
multiplication by $D$ has the following form
\begin{equation}
  \label{abstrD}
  D\circ  = \textup{classical} + t \sum_{\alpha^\vee} \,
(D,\alpha^\vee) \,
S_\alpha\!\left(q^{\alpha^\vee}\right)\,,
\quad S^\alpha(z) \in \bS^\alpha\otimes
\CC[[z]] \,,
\end{equation}
where the first term is the classical multiplication by $D$,
the sum is over all positive coroots $\alpha^\vee\in H_2(X,\ZZ)$,
and $t$ is the weight of the symplectic form $\omega$.  See sections \ref{Lcorr} and \ref{simultaneous} for the argument.

Formula \eqref{abstrD} is \emph{functorial} in the sense
that $\alpha^\vee$-term in it describes the
quantum corrections to multiplication in $H^*(X^\alpha)$.
This reduces the computation of quantum cohomology
to varieties with a unique effective curve class, as we discuss further below.
We will also see that similar functoriality holds for slices
of $X$.

In the case of Nakajima varieties, which come in families, the root subalgebras $\bS^\alpha$
and the operators of classical multiplication can be extended to a representation
of the so called \emph{Yangian} algebra $Y(\grg)$ associated to the corresponding
Kac-Moody Lie algebra, see \cite{var}.
Through the work of Nekrasov and Shatashvili, this has
a direct link to the classical work of C.~N.~Yang in quantum
integrable systems.

%-----------------------------------------------------------------------------------------------------
\subsection{Main result}

In the Springer case, the action of classical multiplication and the root subalgebras
has a very concrete description in terms of
 the \emph{graded affine Hecke algebra} $\calH_t$.
Recall that $\calH_t$ is generated by the symmetric algebra $\Sym \grt^*$
of $\grt^*$
and the group algebra of $W$ subject to the relation
$$
s_{\alp} x_\lam-x_{s_{\alp}(\lam)}s_{\alp}=t(\alp^{\vee}, \lam),
$$
where $s_{\alp}$ is the reflection associated with the simple root $\alp$
and $x_\lambda \in S^1 \grt^*$ corresponds to $\lambda\in \grt^*$.

As before, $t$ is a parameter which may be identified with the weight of the symplectic form $\omega$,
that is, with the equivariant parameter of the $\CC^*$-factor in
$\bG$.
The algebra $\calH_t$ is graded by  $\deg x_\lam=2,\deg w=0, \deg t=2$.

An action of $\calH_t$ on $H^*_\bG(T^*\calB)$
is due to Lusztig \cite{lus-cusp}.
Its construction is recalled in Section \ref{hecke-geometry}.
In particular, $x_{\lam}$ acts by classical multiplication by $D_{\lam}$,
while $\bS^\alpha$ is generated by the corresponding reflection in
$s_\alpha\in W$.
Having introduced the above notations we can now state

\begin{thm}\label{formula} The operator of quantum multiplication
by the divisor $D_\lambda$ is given by
\begin{equation}
    D_{\lambda}\circ \,  =x_{\lam} + t \sum\limits_{\alp^{\vee}\in R_+^{\vee}}
(\lambda, \alpha^{\vee}) \frac{q^{\alpha^{\vee}}}{1 - q^{\alpha^{\vee}}} (s_{\alpha}-1) .
\label{mainf}
\end{equation}
\end{thm}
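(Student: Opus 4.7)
The plan is to apply the structure theorem \eqref{abstrD} and then pin down the undetermined series $S_\alpha(z)$ by a direct rank-one computation. Formula \eqref{abstrD} puts us in the position
\begin{equation*}
D_\lambda\circ \;=\; x_\lambda \;+\; t\sum_{\alpha^\vee\in R_+^\vee}(\lambda,\alpha^\vee)\,S_\alpha\!\left(q^{\alpha^\vee}\right),\qquad S_\alpha(z)\in\bS^\alpha\otimes\CC[[z]],
\end{equation*}
so the theorem reduces to the identification $S_\alpha(z)=\frac{z}{1-z}(s_\alpha-1)$ for each positive coroot.

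First I would identify the root subalgebra $\bS^\alpha$ explicitly. The Lagrangian components of the deformed Steinberg-type variety $X^\alpha\times_{X^\alpha_0}X^\alpha$ are indexed by the two elements of $\langle s_\alpha\rangle\subset W$, and under Lusztig's action of $\calH_t$ recalled in Section~\ref{hecke-geometry} they span the two-dimensional subalgebra $\CC\cdot 1\oplus\CC\cdot s_\alpha\subset\End H^*_\bG(T^*\calB)$. Writing $S_\alpha(z)=f_\alpha(z)(s_\alpha-1)+g_\alpha(z)\cdot 1$ with $f_\alpha(0)=g_\alpha(0)=0$, it remains to pin down the two scalar series $f_\alpha,g_\alpha$.

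Second, I would combine the functoriality of \eqref{abstrD} with $W$-equivariance to reduce to a single rational curve. Every positive coroot has the form $\alpha^\vee=w(\gamma^\vee)$ for some simple coroot $\gamma^\vee$, and the compact effective curves in $X^\alpha$ representing multiples of $\alpha^\vee$ are precisely the multi-covers of the $W$-translates of the minimal rational curves $\mathbb{P}^1_\gamma=G_{\gamma^\vee}\cdot[B]\subset\calB$. It is therefore enough to compute the quantum correction for a single such $\mathbb{P}^1$, i.e.\ for the Springer resolution of $SL_2$, which is $T^*\mathbb{P}^1$.

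Finally, I would carry out the equivariant Gromov-Witten calculation on $T^*\mathbb{P}^1$ by $T\times\CC^*$-localization. The degree-$n$ contribution comes from $n$-fold covers of the zero section; the scaling weight $t$ balances the virtual obstruction so that the integrals become explicit rational functions in the $T$-weights, and the sum over $n\ge 1$ collapses to the geometric series $z/(1-z)$. Matching the resulting two-by-two operator with the action of $(s_\alpha-1)$ on the two $T$-fixed points $[B],[s_\alpha B]$ forces $f_\alpha(z)=z/(1-z)$ and $g_\alpha(z)=0$, which together with the abstract formula yields \eqref{mainf}. The main obstacle is exactly this $T^*\mathbb{P}^1$ computation: one must carefully track the equivariant weights on the multi-cover fixed loci and verify the obstruction-theoretic cancellations that turn the $n$-fold cover integrals into a clean geometric series.
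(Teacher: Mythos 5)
Your overall strategy is the same as the paper's: sections \ref{Lcorr} and \ref{simultaneous} establish exactly the structural formula \eqref{abstrD} (via the reduced class, the factor of $t$, the dimension constraint on the Steinberg correspondence, and deformation invariance along a pencil meeting each root hyperplane once), and section \ref{actual} then pins down the coefficients by reduction to the known $T^*\PP^1$ ($A_1$) answer. The genuine gap in your write-up is the bridge from $X^\alpha$ to a single $\PP^1$. Your stated justification --- that the compact curves in $X^\alpha$ in classes $d\alpha^\vee$ are multi-covers of finitely many $W$-translates of $G_{\gamma^\vee}\cdot[B]\subset\calB$ --- is not correct geometry: $\calB$ is not contained in the deformed fiber $X^\alpha$ at all, and $X^\alpha\cong G\times_{M_z}T^*\PP^1$ is a $T^*\PP^1$-bundle over the affine, positive-dimensional base $A_z=G/M_z$, so the curves in class $\alpha^\vee$ move in a continuous family (the zero-sections of the fibers) and the relevant Lagrangian correspondence is the $\PP^1\times\PP^1$-bundle $W_z$ over $A_z$, not a single quadric. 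Hence ``it is enough to compute for a single such $\PP^1$'' does not follow from your reason; and if you instead localize on $X^\alpha$ directly, the $T$-invariant curves do sit over the finitely many fixed points of $A_z$, but the virtual normal directions along $A_z$ then enter and the computation is no longer literally that of $T^*\PP^1$. The paper closes exactly this point in section \ref{actual}: it compares the absolute reduced class with the reduced class relative to $A_z$, uses the dimension constraint with \emph{non-localized} coefficients to write $\phi_*[\Mbar_{0,2}(X_z,d\alpha^\vee)]^{\mathrm{red}}=c_1[X_z]+c_2[W_z]$ with rational constants, and uses $\iota^!\phi_*=\phi_*\iota^!$ for a point $\iota\colon a\to A_z$ to evaluate $c_1=0$, $c_2=1/d$ from the $SL_2$ case. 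You need some version of this argument (or another reason why the $A_z$-directions drop out) before your $T^*\PP^1$ localization can legitimately determine $f_\alpha$ and $g_\alpha$.

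Two smaller remarks. The reduction to simple coroots via $W$-equivariance is both unnecessary and not free: $W$ acts on $H^*_\bG(T^*\calB)$ through the Springer--Lusztig action, not by automorphisms of $X$, so equivariance of the quantum product is not available a priori (in the paper, \eqref{Wequiv} is a consequence of the theorem, not an input); fortunately you never need it, since every positive root hyperplane is treated on the same footing and its generic fiber has the same $G\times_{M_z}T^*\PP^1$ description. Finally, your concluding $T^*\PP^1$ computation (degree-$d$ covers contributing $1/d$, combined with the divisor equation to produce $q^{\alpha^\vee}/(1-q^{\alpha^\vee})$, and $[\PP^1\times\PP^1]$ acting as $s_\alpha-1$, with no diagonal term) is correct and is precisely what the paper imports from the $A_1$ case rather than rederiving by localization.
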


Since $D_\lambda$ generate cohomology, this determines all operators
of quantum multiplication.  We give a more detailed statement of this theorem in section \ref{main-revisited}.
The commuting elements \eqref{mainf} are, of course, well-known, as we now recall.
%----------------------------------------------------------------------------------------------
\subsection{The quantum connection}
Over
$$
T^{\vee}_{\reg}=\{ q\in T^{\vee}|\ \alp(q)\neq 1\text{ for any }\alp\in R_+\}.
$$
consider what is known as the \emph{quantum differential equation}, namely the following
connection on the trivial bundle with fiber $H^*_{\bG}(X)$
\begin{equation}\label{formula-conn}
\nabla_{\lam}=d_{\lam}-D_{\lam}\circ=d_{\lam}-x_{\lam}- t \sum_{\alp\in R_+}
(\lambda, \alpha^{\vee}) \frac{q^{\alpha^{\vee}}}{1 - q^{\alpha^{\vee}}} (s_{\alpha}-1).
\end{equation}
Here $d_{\lam}$ is the derivative in the direction of $\lambda\in \grt^* = \textup{Lie}(T^\vee)$.
It is a general result of Dubrovin that quantum connections are always flat, see \cite{CoxKatz}.

In our case, a simple gauge transformation, see Section  \ref{affine KZ}, identifies \eqref{formula-conn}
with the {\em affine KZ connection} studied by Cherednik \cite{Cher-book}, Matsuo \cite{Mats},
Felder and Veselov \cite{FelderVeselov} and others.

Recall that an integrable connection $\nabla$ on an algebraic vector bundle $\calE$ defines a $D$-module,
that is, a module over differential operators on the base. For the quantum differential
equation, this is known as the \emph{quantum $D$-module}. The isomorphism class of the $D$-module
determines $\nabla$ up to gauge equivalence; in other words, passing from the quantum connection to the quantum
$D$-module corresponds to forgetting the fact that the quantum connection was defined on the trivial
bundle (thus, knowing the quantum $D$-module alone is not enough in order to recover the quantum
multiplication by divisor classes).

In the Springer case, the
quantum $D$-module coincides with the quantum Calogero-Moser $D$-module for the Langlands
dual group.  After providing background, we will give a precise formulation in Section \ref{calogero} and Theorem \ref{main-detailed}.
In particular, one can describe the corresponding quantum cohomology ring of $T^*\calB$ by using
the corresponding {\em classical Calogero-Moser integrable system}.
For $\grg=\mathfrak{sl}_n$, this was obtained
independently by Nekrasov and Shatashvili. A related result is due to A.~Negut \cite{Negut}, see below.

\subsection{The K\"ahler moduli space}

Let $\Kb=\Tdbar \supset \Td$ be the
compactification defined by the fan of Weyl chambers.
The connection \eqref{formula-conn} extends to $\Kb$ as a connection
with first-order poles along a normal crossing divisor.
In accordance with the mirror symmetry nomenclature, we call
$\Kb$ the \emph{compactified K\"ahler moduli space}.

The expansion \eqref{qmult} is around the point
$$
q^{\,\alpha_1^\vee} = q^{\,\alpha_2^\vee} = \dots = 0\,,
$$
where $\alpha_i^\vee$ are the simple positive coroots.
This is one of the $|W|$-many  $\Td$-fixed points of $\Kb$.
Such points are called the \emph{large radius points}.

The Weyl group $W$ acts on both
the base and the fiber of $\nabla$ and one easily checks that
\begin{equation}
  \label{Wequiv}
  w \, \nabla_\lam \, w^{-1} = \nabla_{w(\lam)} \,, \quad w\in W \,.
\end{equation}
In fact, this is \emph{equivalent} to the commutation relations in $\calH_t$.

Note that $T^\vee$ is naturally the base of the multiplicative version of Grothendieck simultaneous
resolution for the Langlands dual group $G^\vee$. In fact, we expect
these two families to be \emph{equivariant mirrors} and hope to elaborate on this point in
a future paper.

\subsection{Monodromy and derived equivalences}\label{smdr}

In general, a compactified K\"ahler moduli space may have large
radius points $m_1$ and $m_2$ that correspond to nonisomorphic varieties
$X_1\not\cong X_2$. One always expects, however, that
$$
D^b(X_1) \cong D^b(X_2) \,.
$$
and, moreover, that the equivalence
should depend on a choice of a path from $m_1$ to $m_2$, thus
giving
\begin{equation}
\rho: \pi_1(U) \longrightarrow \Aut D^b(X_i)\label{pitoD}
\end{equation}
for a certain open $U\subset\Kb$. One further expects that $U$ is the
regular locus of $\nabla$ and that there is a nonzero
intertwiner between the monodromy of $\nabla$ and the
image of $\rho$ in $K$-theory
$$
\rho_K: \pi_1(U) \longrightarrow \Aut K(X)\otimes \CC \,.
$$
See, for example, \cite{Horja} for an introduction to these ideas.

Specializing this to the
Springer case, we first use the $W$-equivariance
\eqref{Wequiv} to descend $\nabla$ to a connection on
$T^{\vee}_{\reg}/W$. The fundamental group of $T^{\vee}_{\reg}/W$ is
the affine braid group $\Ba$.
Bezrukavnikov constructed an action of $\Ba$ on $D^b(X)$ in
\cite{Bezr,Bezr-Riche}, see also the work of
Khovanov and Thomas \cite{KhovanovThomas}
in the case $\grg=\mathfrak{sl}_n$.
On the $K$-theory, this braid group action
factors trough the affine Hecke
algebra action
constructed earlier by  Kazhdan and Lusztig
\cite{kazhdan-lusztig} and Ginzburg \cite{chriss-ginz}. And indeed this
precisely
matches Cherednik's description of the monodromy, see page 64 in
\cite{Cher-book} and Sections
\ref{monodromy} and \ref{monodromy-derived} for further discussion.

A parallel result for $X=\Hilb(\CC^2,n)$ is the subject of \cite{Bezr-O}.
In both cases, monodromy of $\nabla$ is irreducible and
isomorphic to $\rho_K$.

%--------------------------------------------------------------------------------------------------------------------------
\subsection{Matching of equivariant parameters}
One aspect of the correspondence described in Section \ref{smdr} merits
a special discussion, namely the identification of equivariant
parameters.

Equivariant $K$-theory $K_\bG(X)\otimes\CC$ is a module over
the representation ring $\CC[\bG]^\bG$ of $\bG$
while $H^*_\bG(X,\CC)$ is a module over $\CC[\mathbf{g}]^\bG$ where
$\mathbf{g}$ is the Lie algebra of $\bG$. Assuming that $\nabla$ depends
regularly on $a\in\mathbf{g}$, its monodromy is an entire function
of $a$. Hence, to match $\rho_K$ with the monodromy of $\nabla$,
we need a map
$$
\zeta: \CC[\bG]^\bG \to \CC_\textup{an}[\mathbf{g}]^\bG\,,
$$
where $\CC_\textup{an}$ stands for analytic functions.
We claim the only natural choice is
\begin{equation}
  \label{zeta}
   \zeta(f)(a) = f\left(e^{2\pi i a}\right) \,.
\end{equation}
This holds in the Springer case and can be argued in general as
follows.

The simplest autoequivalences of $D^b(X)$ are tensor products
with line bundles $\cL\in\Pic(X)$. These are expected to be
compatible with $\rho$ as follows. Let $m\in\Kb$ be a large
radius point. The intersection of a small
neighborhood of $m$ with $U$ has an abelian fundamental group,
namely $H^2(X,\ZZ)$. One requires that
\begin{equation}
\rho(c_1(\cL)) = \cL \otimes \, \textup{---}
\label{cLm}
\end{equation}
To see the connection with \eqref{zeta}, we treat the
purely equivariant classes in $H^2_\bG(X,\ZZ)$ on the same
footing as the geometric ones. For a purely equivariant
class, the quantum differential equation is
a constant coefficient, in fact, scalar equation.
In this case \eqref{cLm}
specializes to \eqref{zeta}.

\subsection{Shift operators}\label{shift-intro}
Formula \eqref{zeta} predicts the monodromy is a
periodic function of equivariant parameters. For
the Springer resolution, this means
that for any $s\in \Lambda^\vee\oplus \ZZ$, we have a
a shift operator
$$
\bS(s) \in \End(H^*_T(X)) \otimes \CC(T^\vee)
$$
satisfying
$$
\nabla(a) \, \bS(s) = \bS(s) \, \nabla(a+s) \,,
$$
where $a\in\Lie(\bG)$ denotes the
equivariant parameters of $\nabla$.
And, indeed, such intertwiner may be constructed geometrically
as described in Section \ref{shift}.

These are the shift operators of Opdam (\cite{opdam, heckman}) that played a very
significant role in understanding the quantum Calogero-Moser
systems.

%--------------------------------------------------------------------------------------------------------------------
\subsection{The Toda limit}\label{int-limit}
Recall that the parameter $t$ in \eqref{mainf} is the equivariant parameter
for the scaling action in the fibers of $T^*\calB$. It is known and explained
in Section \ref{limit-geometry} that the $t\to\infty$ limit, taken
correctly, recovers
the quantum cohomology of the base $\calB$.

The analog of $\calH_t$ for $\calB$ is a certain nil-version $\calH_{nil}$
generated by  $x_{\lam}$ for $\lam\in\grt^*$ and $\ow$ for $w\in W$.
The element  $x_{\lam}$ still acts on by multiplication
by a divisor $D_{\lam}^{\calB}$, while $\ow$ are nilpotent in $\calH_{nil}$.
Precise definitions are given in Section \ref{hnil}.

Referring to Sections \ref{limit-algebra} and \ref{limit-geometry} for
the details of the $t\to\infty$ limit, we state the result.
Introduce the following
subset $R_+'$ of the set of $R_+$ of positive roots of $G$. Namely, let $\alp\in R_+$. Denoting by $\Pi$ the set of simple positive roots, we
have an expansion
$$
\alp=\sum\limits_{\beta\in \Pi} a_{\beta}\beta \,.
$$
Then $\alp\in R_+'$ if one of the following conditions is satisfied:
\begin{enumerate}
\item $\alp$ is a long root;
\item $a_{\beta}=0$ for all long simple roots $\beta$.
\end{enumerate}

In particular, we see that $R_+=R_+'$ if $G$ is simply laced and that $R_+'$ contains all simple roots.

\begin{thm} \label{formula-flags}
The operator of quantum multiplication by $D_{\lam}^{\calB}$ on $H^*_G(\calB)$ is equal to
$$
x_{\lam}+\sum\limits_{\alp\in R_+'}(\lam,\alp^{\vee})q^{\alp^{\vee}}\os_{\alp}.
$$
\end{thm}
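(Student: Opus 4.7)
The strategy is to obtain Theorem \ref{formula-flags} as the $t \to \infty$ limit of Theorem \ref{formula}, in the precise sense developed algebraically in Section \ref{limit-algebra} and geometrically in Section \ref{limit-geometry}. Recall that $t$ is the equivariant weight of the $\CC^*$-scaling of the cotangent fibers of $T^*\calB$; the limit corresponds to virtual localization to the zero section $\calB \subset T^*\calB$, recovering Gromov-Witten invariants of $\calB$ from those of $T^*\calB$ after a compensating rescaling of the K\"ahler parameters $q^{\alpha^\vee}$.

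The first step is to establish the algebraic degeneration $\calH_t \rightsquigarrow \calH_{nil}$. Setting $\overline{s}_i := \lim_{t \to \infty} t^{-1}(s_{\alpha_i} - 1)$ for each simple reflection, the defining relation $s_i x_\lambda - x_{s_i(\lambda)}s_i = t(\alpha_i^\vee, \lambda)$ degenerates to the nil-Hecke relation $\overline{s}_i x_\lambda - x_{s_i(\lambda)}\overline{s}_i = (\alpha_i^\vee, \lambda)$, while $s_i^2 = 1$ together with $(s_i-1)^2 = -2(s_i-1)$ yields $\overline{s}_i^2 = 0$. For a general $w \in W$ of length $\ell$ with reduced expression $s_{i_1}\cdots s_{i_\ell}$, the product of rescaled operators converges to $\overline{w} = \overline{s}_{i_1}\cdots \overline{s}_{i_\ell}$; applying this to a reduced expression of $s_\alpha$ for $\alpha \in R_+$ and tracking the top-$t$ term of $(s_\alpha - 1)$ along that word gives the key asymptotic $(s_\alpha - 1) \sim t^{\ell(s_\alpha)}\, \overline{s}_\alpha$ as $t \to \infty$, with all lower-order contributions of strictly lower order in $t$.

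Next, one substitutes into \eqref{mainf}. Under the rescaling $q^{\alpha_i^\vee} \mapsto t^{-2}q^{\alpha_i^\vee}$ of simple K\"ahler variables (the exponent $-2$ is forced geometrically by the $\CC^*$-weights on the relative obstruction bundle of $\Mbar_{0,3}(\calB,\alpha^\vee)\hookrightarrow \Mbar_{0,3}(T^*\calB,\alpha^\vee)$), the $k$-th term of $q^{\alpha^\vee}/(1 - q^{\alpha^\vee}) = \sum_{k\ge1}q^{k\alpha^\vee}$ scales by $t^{-2k\,\mathrm{ht}(\alpha^\vee)}$, where $\mathrm{ht}(\alpha^\vee)$ is the height of $\alpha^\vee$ in simple coroots. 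Combining the prefactor $t$ in \eqref{mainf} with the $t^{\ell(s_\alpha)}$ from $(s_\alpha - 1)$ and the $t^{-2k\,\mathrm{ht}(\alpha^\vee)}$ from $q^{k\alpha^\vee}$, the summand's total scaling is $t^{\,1 + \ell(s_\alpha) - 2k\,\mathrm{ht}(\alpha^\vee)}$. Only the leading $k = 1$ term can possibly contribute (higher $k$ have strictly more negative scaling), and its limit is finite and nonzero precisely when $\ell(s_\alpha) = 2\,\mathrm{ht}(\alpha^\vee) - 1$, giving $(\lambda, \alpha^\vee)\,q^{\alpha^\vee}\,\overline{s}_\alpha$; otherwise the summand vanishes in the limit.

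The main obstacle is then the purely combinatorial identification
$\{\alpha \in R_+ : \ell(s_\alpha) = 2\,\mathrm{ht}(\alpha^\vee) - 1\} = R_+'$.
For $\alpha$ long one proves this by induction on $\mathrm{ht}(\alpha^\vee)$: choose a simple root $\beta$ such that $s_\beta(\alpha)$ remains a long positive root of one-lower coroot-height, which also reduces $\ell(s_\alpha)$ by two, the base case being the simple reflections. For $\alpha$ short and in the span of short simple roots, the computation reduces to the simply-laced sub-root system generated by the short simple roots, where the identity $\ell(s_\alpha) = 2\,\mathrm{ht}(\alpha^\vee) - 1$ is standard. For any remaining short root $\alpha$, a direct case-by-case check (in types $B$, $C$, $F_4$, $G_2$) yields $\ell(s_\alpha) < 2\,\mathrm{ht}(\alpha^\vee) - 1$, killing the corresponding summand. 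Assembling the surviving terms produces the right-hand side of Theorem \ref{formula-flags}; the classical part $x_\lambda$ is unchanged by the limit and is identified with multiplication by $D_\lambda^{\calB}$ via the natural isomorphism of $\Sym(\grt^*)$-modules between $H^*_\bG(T^*\calB)$ and $H^*_G(\calB)[t]$.
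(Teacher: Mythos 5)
Your proposal follows the same overall strategy as the paper's proof of Theorem~\ref{formula-flags} (= Theorem~\ref{usual flags}(1)): one takes the $t\to\infty$ limit of Theorem~\ref{formula}, combining the algebraic degeneration $\calH_t \rightsquigarrow \calH_{nil}$ via $\tilw = t^{-\ell(w)}w$ (Proposition~\ref{prop-limit}) with the geometric comparison of obstruction theories that forces the rescaling $q^\beta\mapsto t^{-c_1(\calB)\cdot\beta}q^\beta$ (Proposition~\ref{prop-limit-geometry}), and the combinatorial dichotomy $\ell(s_\alpha)\le 2\,\mathrm{ht}(\alpha^\vee)-1$ with equality iff $\alpha\in R_+'$ then picks out exactly the terms of Theorem~\ref{formula-flags}. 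Your scaling count $t^{1+\ell(s_\alpha)-2k\,\mathrm{ht}(\alpha^\vee)}$ matches the paper's manipulation $t^{2\rho}\nabla_\lambda t^{-2\rho}$, and both proofs reduce to the same length--height inequality.

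The one genuine difference is in the treatment of the combinatorial lemma. The paper outsources it to Juteau~\cite{jut}, Lemma~1.3 and Section~1.2, which cleanly separates the long-root case ($\ell(s_\alpha) = (2\rho,\alpha^\vee)-1$) from the short-root case ($\ell(s_\alpha) = (2\rho^\vee,\alpha)-1$) and then compares $(\rho^\vee,\alpha)$ with $(\rho,\alpha^\vee)$. You instead sketch a direct inductive argument. Your induction step for long $\alpha$ (``choose a simple root $\beta$ such that $s_\beta(\alpha)$ remains a long positive root of one-lower coroot-height'') is not automatic: it requires a simple root $\beta$ with $(\alpha^\vee,\beta)=1$, and one should verify such a $\beta$ always exists (in a nonsimply-laced system $(\alpha^\vee,\beta)$ can be $2$ or $3$), and separately that $\ell$ drops by exactly two for the same $\beta$. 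Your reduction of the short, $R_+'$-case to the Levi generated by short simple roots is fine, since $\ell_W$ restricted to a standard parabolic $W_J$ equals $\ell_{W_J}$ and $(\rho,\alpha^\vee) = \mathrm{ht}(\alpha^\vee)$ is computed the same way inside and outside the Levi. Citing~\cite{jut} as the paper does avoids these case checks entirely; if you keep your direct route you should supply the existence of the required simple root and not merely assert it. Finally, your geometric justification for the exponent $-2$ on $q^{\alpha_i^\vee}$ is a compressed gesture at the content of Proposition~\ref{prop-limit-geometry} (the rank of the relative obstruction bundle equals $c_1(\calB)\cdot\beta = (2\rho,\beta)$, and one divides by its top Chern class $t^{\mathrm{rk}}$); this is the right idea, but the paper spells out the compatibility triangle of obstruction theories, which you would need if this were a self-contained proof.
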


\noindent

The quantum cohomology of $\calB$ has been studied in great detail by many
authors, see for example \cite{Ciocan,GiventalKim,kim,mih,Rietsch} for a very incomplete selection of
references.  In particular, the above formula may be deduced from Theorem 6.4 in \cite{mih}.

%----------------------------------------------------------------------------------------------------------
\subsection{Generalization to Slodowy slices}\label{int-slodowy}

To any $n$ in the nilpotent cone $\calN$,
 one associates the {\em Slodowy slice} $\calS_n\subset\calN$, which is a transversal slice
to the $G$-orbit of $n$ in $\calN$. This is an affine conical algebraic variety, which has an open smooth symplectic
subset (equal to the intersection of $\calS_n$ with the open $G$-orbit in $\calN$).
The slice $\calS_n$ is defined uniquely up to conjugation by an element of the centralizer $Z_n$ of $n$ in $G$.
Let $\tcS_n=\pi^{-1}(\calS_n)$. Then $\tcS_n$ is smooth and symplectic; it provides a symplectic resolution of singularities
for $\calS_n$.  Note that when $n=0$ we have $\calS_n=\calN$ and $\tcS_n=T^*\calB$.

According to \cite{lus-cusp} and \cite{ginzburg} the algebra
$\calH_t$ still acts on $H^*_{Z_n\x\CC^*}(\tcS_n)$. Abusing notation we shall denote the restriction of
the class $D_{\lam}$ to $\tcS_n$ by the same symbol. Then we have
%------------------------------------------------------------------------------------
\begin{thm}\label{formula-slodowy}
Assume that the restriction map $H^2(X,\ZZ)\to H^2(\tcS_n,\ZZ)$ is an isomorphism.
Then the operator of quantum multiplication by $D_{\lam}$ in $H^*_{Z_n\x\CC^*}(\tcS_n)$ is given by the same
formula as in Theorem \ref{formula}.
\end{thm}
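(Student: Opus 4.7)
The strategy is to mirror the proof of Theorem \ref{formula}, using functoriality of the abstract formula \eqref{abstrD} to reduce each $\alpha^\vee$-contribution on $\tcS_n$ to a rank-one problem that is inherited from the Springer case.

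First, I would apply \eqref{abstrD} to $\tcS_n$ as an equivariant symplectic resolution. The action of $Z_n \times \CC^*$ scales $\omega$ non-trivially via the $\CC^*$-factor and has a proper fixed locus (inherited from the $T$-action on $T^*\calB$), so the hypotheses required for \eqref{abstrD} hold, yielding
\[
D_\lambda \circ \, = x_\lambda + t \sum_{\alpha^\vee}(\lambda,\alpha^\vee)\, S_\alpha\!\left(q^{\alpha^\vee}\right), \qquad S_\alpha(z) \in \bS^\alpha \otimes \CC[[z]].
\]
The hypothesis $H^2(X,\ZZ) \simeq H^2(\tcS_n,\ZZ)$ identifies the effective curve classes of $\tcS_n$ with $R_+^\vee$ and ensures that the pairings $(\lambda, \alpha^\vee)$ are computed identically on $X$ and on $\tcS_n$; in particular the index set and the linear coefficients match Theorem \ref{formula}.

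Next, I would identify the classical term with Lusztig's action of $\calH_t$ on $H^*_{Z_n \times \CC^*}(\tcS_n)$ from \cite{lus-cusp,ginzburg}, so that $x_\lambda$ acts as multiplication by $D_\lambda|_{\tcS_n}$. The main geometric input is that the reflections $s_\alpha \in W \subset \calH_t$ generate the root subalgebra $\bS^\alpha$ for $\tcS_n$: this reduces to showing that the Lagrangian components of \eqref{XaXa} for $\tcS_n^\alpha$ produce, via Lusztig's geometric construction, precisely the reflection $s_\alpha$. Because $\tcS_n$ is a transverse slice to a nilpotent orbit, this should follow by restriction from the corresponding statement for $X = T^*\calB$ recalled in Section \ref{hecke-geometry}.

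Finally, functoriality of \eqref{abstrD} reduces the determination of $S_\alpha(z)$ to a Gromov-Witten computation on the rank-one slice $\tcS_n^\alpha$, the generic fiber of the deformation of $\tcS_n$ over the hyperplane $\{(\omega,\alpha^\vee)=0\}$. Since $\tcS_n^\alpha$ is a transverse slice to a nilpotent orbit inside $X^\alpha$, and the rational curves of class $\alpha^\vee$ on $\tcS_n$ are the transverse intersections of $G_{\alpha^\vee}$-orbit closures with $\calS_n$, the genus-zero Gromov-Witten counts on $\tcS_n^\alpha$ coincide with those on $X^\alpha$ carried out in Section \ref{proofoftheorem}. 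Thus $S_\alpha(z) = (s_\alpha - 1)\, z/(1-z)$ as in the Springer case, and \eqref{mainf} holds verbatim on $\tcS_n$.

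The principal obstacle is the compatibility between the geometric construction of $\bS^\alpha$ on $\tcS_n$ and the Lusztig-Ginzburg $\calH_t$-action: one must verify, using the transversality of the Slodowy slice and properness of the intersections of Springer fibers with $\calS_n$, that the restriction of the Steinberg correspondence to $\tcS_n \times_{\calS_n} \tcS_n$ induces the same reflection $s_\alpha$ on $H^*_{Z_n \times \CC^*}(\tcS_n)$ that acts on $H^*_{\bG}(T^*\calB)$. Once this compatibility and the transverse-slice matching of the rank-one Gromov-Witten calculation are in place, the result follows by assembling the rank-one contributions as in the proof of Theorem \ref{formula}.
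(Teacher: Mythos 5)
Your overall strategy---re-running the abstract arguments of Sections \ref{Lcorr}--\ref{simultaneous} directly for $\tcS_n$ and then matching coefficients by a rank-one computation on the slice's codimension-one deformation fibers---is genuinely different from the paper's, and it has a real gap at exactly the step you yourself flag as the ``principal obstacle.'' The assertion that the genus-zero counts on $\tcS_n^{\alpha}$ ``coincide with those on $X^{\alpha}$'' by transversality is not a proof: the reduced virtual class of $\Mbar_{0,2}(\tcS_n^{\alpha},d\alpha^{\vee})$ is governed by the obstruction theory of maps into the slice, not into $X^{\alpha}$, so the normal-bundle (excess) contribution must be controlled. Moreover, the explicit calculation of Section \ref{actual} used the fibration $X_z=G\times_{M_z}T^*\PP^1\to A_z$ over an affine base together with a relative reduced class; the intersection of the slice with $X_z$ carries no such bundle structure (the slice is transverse to the $G$-orbits and is not compatible with the projection to $A_z$), so that computation does not transfer ``without change,'' and your appeal to it is circular precisely where new input is needed. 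The same unproven compatibility is hidden in your identification of the slice's root subalgebras with the reflections $s_{\alpha}$ by ``restriction'' of the Steinberg correspondence.

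The paper avoids all of this by never re-running the deformation argument for $\tcS_n$: it compares the moduli spaces for $\tcS_n$ and $X$ directly. Since $\tcS_n=g^{-1}(\mathfrak{v}_n)$ is cut out of $X$ by pulling back the affine subspace $\mathfrak{v}_n$, and genus-zero stable maps are contracted by $g$, the two obstruction theories are compatible in the sense of Theorem 5.10 of \cite{behrend-fantechi}, which gives $[\Mbar_{0,2}(\tcS_n,\beta)]^{\mathrm{vir}}=\iota^{!}[\Mbar_{0,2}(X,\beta)]^{\mathrm{vir}}$. Pushing forward to $\tcS_n\times_{\calS_n}\tcS_n$ then yields $\phi_*[\Mbar_{0,2}(\tcS_n,\beta)]^{\mathrm{vir}}=\iota^{*}\bigl(\phi_*[\Mbar_{0,2}(X,\beta)]^{\mathrm{vir}}\bigr)$, and since the $\calH_t$-action on $H^*_{\TT'}(\tcS_n)$ is by definition induced by the same restriction map $\iota^{*}$ from the Steinberg variety, Theorem \ref{formula} transfers verbatim; the $H^2$ hypothesis only serves to identify the curve classes. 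If you wish to keep your route, this virtual-pullback compatibility (or an equivalent excess-intersection argument) is the missing ingredient you would have to supply, both for the rank-one matching and for the identification of $\bS^{\alpha}$ with $s_{\alpha}$; also note that only the subtorus $\TT'\subset\TT$ preserves $\tcS_n$, so the properness of fixed loci needed for \eqref{abstrD} is not simply ``inherited'' from $T^*\calB$ and must be checked for the slice.
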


Let us note that the assumption on $H^2$ in the formulation of Theorem \ref{formula-slodowy} is not very restrictive.
For example, one can show (cf. the Appendix) that this assumption always holds when $G$ is simply laced.
%--------------------------------------------------------------------------------------------------------------

\subsection{Reduction to rank 1}\label{rank1}

As discussed in section \ref{rootsubalgebra}, one can study quantum cohomology of $X$ in terms of
that of generic non-affine deformations $X^{\alpha}$.  In fact, we can further simplify this analysis as follows.

The symplectic form $\omega$ induces a Poisson algebra
structure on $\mathcal{O}_X$ which makes $X_0$ a Poisson
variety with finitely many symplectic leaves, see \cite{Kaledin3}.
Let $Z\subset X_0$ be a symplectic leaf of minimal
dimension.  If one has an isotrivial fibration $X\to Z$ which is a Poisson map,
then, since $Z$ is affine, this reduces quantum cohomology of $X$ to that of the
fiber $X'$.
If one has such a structure for $X^\alpha$, in combination with \eqref{abstrD}, we can reduce further to the fibers
$$
\xymatrix{
X'_\alpha \ar@{^{(}->}[r]& X^\alpha \ar[d] \\
 & Z^\alpha \\
}
$$
for the simpler varieties $X^\alpha$ of $X$.

In general, while we may not have a global fibration $X\to Z$, there is a formal result of Kaledin \cite{Kaledin3} which allows us to write, for $z\in Z$, the formal neighborhood $\widehat{(X_0)}_{z}$
as a product of $\widehat{Z}_{z}$ and the formal neighborhood of a lower-dimensional symplectic singularity.  It turns out that, for equivariant symplectic resolutions, this formal statement is already sufficient for the reduction step.

We say that a symplectic resolution $X$ has rank $1$ if
$Z$ is a point and $\dim H_2(X)=1$. The above reduction
procedure obviously stops once we reach a rank $1$
variety $X$.
Examples of rank 1 varieties are
cotangent bundles $T^*\mathrm{Gr}(k,n)$ of the Grassmannians and
also moduli of
framed torsion-free sheaves on $\CC^2$.
Further examples may be found among Nakajima quiver varieties.

In the Springer case, only $T^*\PP^1$ appear as fibers $X'_\alpha$ - here the formal argument will be unnecessary.
Their quantum cohomology is very well understood. Similarly,
Nakajima varieties for quivers of finite type lead only to
$T^*\mathrm{Gr}(k,n)$. They will be discussed in forthcoming
paper (the corresponding quantum connection should presumably be related to the
{\em trigonometric Casimir connection} -- cf. \cite{TL1, TL2}). Similarly, for quiver varieties
of affine type one has to understand the the quantum cohomology of the moduli space of
framed sheaves on $\PP^2$ of arbitrary rank; this is more involved - cf.  \cite{MO}.

\subsection{Related work}

{}From our viewpoint, the operators of quantum multiplication
form a family (parametrized by $q$) of maximal abelian subalgebras
in a certain geometrically constructed Yangian. Long before
geometric representation theory, Yangians appeared in
mathematical physics as symmetries of integrable models
of quantum mechanics and quantum field theory. The maximal
abelian subalgebra there is the algebra of quantum integrals
of motion, that is, of the operators commuting with the
Hamiltonian. A profound correspondence
between quantum integrable systems and supersymmetric gauge
theories was discovered by Nekrasov and Shatashvili, see \cite{NekSh}.
It connects the two appearances of Yangians and, for example,
correctly predicts that the eigenvalues of the operators of quantum
multiplication are given by solutions of certain Bethe
equations (well-known in quantum integrable systems, but
probably quite mysterious to geometers).
The integrable and geometric viewpoints are complementary in
many ways and, no doubt, will lead to new insights on both
sides of the correspondence.

It is a well-known phenomenon that a particular solution
of the quantum differential equation (the so-called $J$-function)
may often be computed as the generating function of integrals of
certain cohomology classes over different compactifications of the
moduli space of maps $\PP^1\to X$.
For example for maps to flag varies of $\grg=\mathfrak{sl}_n$, one can try to use
the so called Laumon moduli space flags of sheaves on $\PP^1$.
The corresponding generating function
was identified by A.~Negut with
the eigenfunctions of the quantum Calogero-Moser system \cite{Negut}.

As a rule, sheaf compactifications admit torus action with isolated
fixed points, thus expressing $J$-functions
as a certain multivariate hypergeometric series. The spectrum
of the operators of quantum multiplication may be deduced from
the semiclassical asymptotics of this solution. This asymptotic behavior
is determined by a unique maximal term in the hypergeometric
series. The spectrum is thus determined as a solution of a
certain finite-dimensional variational problem. It is one of
the cornerstones of Nekrasov-Shatashvili theory that this
variational problem coincides with the Yang-Yang variational
description of Bethe roots. For flag varieties, this can
be seen very explicitly.

%----------------------------------------------------------------------
\subsection{Organization of the paper}
In Section \ref{hecke} we review necessary facts about the graded affine
Hecke algebra and the corresponding affine KZ connection as well as introduce the  graded affine nil-Hecke
algebra and the corresponding connection. In Section \ref{hecke-geometry} we
review various well-known
facts about the cohomology of $\calB$ and its cotangent bundle and restate our main result in these terms. In Section \ref{GWprelim}
we discuss equivariant Gromov-Witten invariants and explain general properties
of the reduced virtual fundamental class.  Once these generalities are in place, we apply them in Section \ref{proofoftheorem} to give an extremely short proof of Theorem \ref{formula}.
 In Section \ref{shift} we explain the geometric construction of the shift operators for the affine KZ connection.
In Sections \ref{limit-algebra} and \ref{limit-geometry} we study the $t\to \infty$ limit of the above constructions.

Finally, in Section \ref{slodowy} we prove the generalization of
Theorem \ref{formula} to the varieties $\tcS_n$.

%-------------------------------------------------------------------------------------
\subsection{Acknowledgments}
Numerous discussions with R.~Bezrukavnikov, P.~Etingof, N.~Nekrasov,
and S.~Shatashvili
played a very important role in development of the ideas presented
in this paper.
We would also like to thank T.~Bridgeland, I.~Cherednik, D.~Kaledin, T.~Lam,  E.~Opdam, R.~Pandharipande, and Z.~Yun for very helpful conversations and for providing
us with references on various subjects.

A.B. was partially supported by the NSF grant DMS-0901274.
D.M. was partially supported by a Clay Research Fellowship.
A.O. was partially supported by the NSF grant DMS-0853560.

Part of this work was done when A.B. was visiting the program on "Algebraic Lie Theory" at
Isaac Newton Mathematical Institute. He would like to thank the program organizers for the invitation and
the institute stuff for its hospitality.

%---------------------------------------------------------------------------------------------------
\section{Hecke algebras, connections and integrable systems}\label{hecke}
In this section we recall some standard algebraic background about graded affine Hecke algebras and the affine KZ connection.  The geometrically-minded reader can skip this section on a first reading.

\subsection{The graded affine Hecke algebra}
Consider the graded Hecke algebra $\calH_t$. By definition, it is generated by elements $x_{\lam}$ for $\lam\in\grt^*$
and $w\in W$ and a central element $t$ such that

a) $x_{\lam}$ depends linearly on $\lam\in\grt^*$

b) $x_{\lam} x_{\mu}=x_\mu x_\lam$ for any $\lam,\mu\in\grt^*$

c) The $w$'s form the Weyl group inside $\calH_t$;

d) For any $i\in I$, $\lam\in \grt^*$ we have
$$
s_i x_\lam-x_{s_i(\lam)}s_i=t(\alp_i^{\vee}, \lam),
$$
where $s_i$ is the reflection associated with the simple root $\alp_i$.

We define a grading on $\calH_t$ in such a way that $\deg x_\lam=2,\deg w=0, \deg t=2$.
%----------------------------------------------------------------------------------------------------------------------
\subsection{The affine KZ connection}\label{affine KZ}
The Dunkl (or affine KZ) connection is a connection on $T^{\vee}_{\reg}$ with values in $\calH_t$. In other words, given a module $M$
over $\calH_t$, we may define a connection $\nabla$ on the trivial bundle over $T^{\vee}_{\reg}$ with fiber $M$.
The connection is defined as follows. Any $\lam\in\grt^*$ defines a vector field on $T^{\vee}$ and thus on
$T^{\vee}_{\reg}$; we shall denote by $d_{\lam}$ the derivative in the direction of this vector field.
In order to define $\nabla$ it is enough to define $\nabla_\lam$ for every $\lam\in \grt^*$, which
is given by the following formula:
\begin{equation}\label{connection}
\nabla_\lam=d_\lam-t\sum\limits_{\alp>0}(\lam,\alp^{\vee}) \frac{q^{\alp^{\vee}}}{1-q^{\alp^{\vee}}}(s_{\alp}-1)-x_\lam
\end{equation}
By specifying $x_{\lam}$'s to their values at some point in $\grt$ we get a family of connections on the trivial bundle with
fiber $M$ over $T^{\vee}_{\reg}$, parametrized by $\grt$. This connection is known to have regular singularities; moreover, it
is clear that it is $W$-equivariant (with respect to the natural action of $W$ on $T^{\vee}_{\reg}$ and {\it trivial} $W$-action
on $M$).

For the sake of completeness, let us compare our form of the connection $\nabla$ with the connection
of \cite{Cher-book}, (1.1.41). In {\em loc. cit.} the author defines a slightly different connection
$\nabla'$ given by
$$
\nabla'_{\lam}=d_\lam-t\sum\limits_{\alp>0}(\lam,\alp^{\vee})
\frac{s_{\alp}}{q^{\alp^{\vee}}-1}-x_\lam.
$$

Let
$$
\del=\prod\limits_{\alp\in R^+}(q^{\alp^{\vee}}-1).
$$
Then it is clear that $\nabla=\del^{-t}\nabla'\del^t$. In other words, our connection $\nabla$ is obtained from Cherednik's
connection $\nabla'$ by a simple gauge transformation. As a byproduct of this remark, we see that $\nabla$ is integrable
(since the same is known about $\nabla'$).
%----------------------------------------------------------------------------------------------

\subsection{The graded nil-Hecke algebra and the Toda connection}\label{hnil}
In this subsection we'll be working with the affine nil-Hecke algebra $\calH_{nil}$,
defined as follows: it is also generated by $x_{\lam}$ for every $\lam\in \grt^*$
 and by elements $\ow$ for $w\in W$ satisfying the following relations:

a')$x_\lam x_\mu=x_\lam x_\mu$ for every $\lam,\mu\in\grt^*$

b')
\begin{equation} \label{nil-Hecke}
\ow_1\ow_2=
\begin{cases}{\ow_1\ow_2}\ \text{if $\ell(w_1w_2)=\ell(w_1)+\ell(w_2)$}\\
0\ \text{otherwise}
\end{cases}
\end{equation}

c')
\begin{equation}
\os_i x_\lam-x_{s_i(\lam)}\os_i=(\alp_i^{\vee}, \lam).
\end{equation}

The algebras $\calH_t$ and $\calH_{nil}$ are related in the following way: let $\tcH_t$ denote the $\CC[t^{-1}]$-subalgebra
of $\calH_t[t^{-1}]$ generated by all the $x_\lam$'s and by the elements $\tilw=t^{-\ell(w)}w$. Then it is clear that
the fiber of $\tcH_t$ at $t=\infty$ is naturally isomorphic to $\calH_{nil}$. In other words we get a $\PP^1$-family $\calH_{\PP^1}$
of associative algebras whose restriction to ${\mathbb A}^1$ is isomorphic to $\calH_t$ and whose fiber at $t=\infty$ is naturally isomorphic
to $\calH_{nil}$. Informally we can say that
$\calH_{nil}$ is the limit of $\calH_t$ when $t\to\infty$; under this limit the elements $x_\lam$ go to themselves and
the limit of $\tilw$ is equal to $\ow$.

Let now $M$ be an $\calH_t$-module which is free over $\CC[t]$ and let $\tilM$ be an $\tcH_t$-submodule of $M[t^{-1}]$
such that $\tilM[t]=M[t^{-1}]$. In other words, $\tilM$ defines an extension of $M$ to a sheaf of modules $M_{\PP^1}$ over
$\calH_{\PP^1}$. The fiber of $\tilM$ at $t=\infty$ is an $\calH_{nil}$-module which we shall
denote by $\oM$. In the future such $\PP^1$-families of $\calH_{\PP^1}$-modules will be of special interest to us.
One example of such family is discussed below.
%---------------------------------------------------------------------------------------
\subsection{Some modules}
Here we would like to describe explicitly some modules over $\calH_t$ and $\calH_{nil}$, which we are going
to use in the future.

First, we define a module $\ocM$ over $\calH_{nil}$. As a vector space we have
$\ocM=\Sym(\grt^*)=\calO(\grt)$.
The action of $\calH_{nil}$ on $\Sym(\grt^*)$ is described as follows: the action of
$x_{\lam}$ is just given by the multiplication by $\lam$. The action of $\os_i$ (for a simple reflection $s_i$)
is given by
\begin{equation}\label{action-formula-bar}
\os_i(f)=\frac{f-f^{s_i}}{\alp_i},
\end{equation}
where $f^{s_i}(a)=f(s_i(a))$.

Similarly, we can define a module $\calM_t$ over $\calH_t$. Namely, we set
$\calM_t=\Sym(\grt^*)[t]=\calO(\grt\x\CC)$ as a vector space. The action of
$\calH_t$ is defined as follows:
the element $x_{\lam}\in \calH_t$ as before acts by multiplication by $\lam$; $t$ acts in the obvious
way. The action of a simple reflection $s_i\in W$ is defined by
$$
s_i(f(a,t))=f(a,t)-(f(a,t)-f(s_i(a),t))(1-\frac{t}{\alp_i}).
$$
In other words we have
\begin{equation}\label{action-formula}
(1-s_i)f(a,t)=(f(a,t)-f(s_i(a),t))(1-\frac{t}{\alp_i}).
\end{equation}
The verification of the relations of $\calH_t$ is straightforward.

It is also clear that we if we set $\tils_i=t^{-1}s_i$ then
$\lim_{t\to\infty} \tils_i$ is independent of $t$ and is equal to $\os_i$.
Hence when $t\to\infty$ the action of $\tilw=t^{-\ell(w)}w$ becomes independent
of $t$ and goes to the the action of $\ow$ on $\Sym(\grt^*)$.
In other words, if we set $\calM_{\PP^1}$ to be the trivial bundle
over $\PP^1$ with fiber $\Sym(\grt^*)$ then $\calM_{\PP^1}$ becomes a sheaf
of modules over $\calH_{\PP^1}$ whose restriction to $\AA^1$ is $\calM_t$ and whose
fiber at $\infty$ is $\ocM$.

Note that the action of $\Sym(\grt^*)^W$ commutes with the action of $\calH_t$ on $\calM_t$ and with the
action of $\calH_{nil}$ on $\ocM$. For any $\xi\in\grt/W$ let us denote by $\CC_{\xi}$ the
corresponding one-dimensional module over $\Sym(\grt^*)^W$. Then we set
$$
\calM_{\xi,t}=\calM_t\underset{\Sym(\grt^*)^W}\otimes \CC_{\xi};\qquad
\calM_{\xi}=\ocM\underset{\Sym(\grt^*)^W}\otimes \CC_{\xi},
$$
which are modules over $\calH_t$ and $\calH_{nil}$ respectively of dimension $\# W$.

On the other hand, for each $a\in\grt$ let us denote by $\CC_a$ the one-dimensional
$\Sym(\grt^*)$-module supported at $a$. We define
$$
M_{a,t}=\calH_t\underset{\Sym(\grt^*)}\otimes \CC_a.
$$
These are often called {\em the principal series representations} of $\calH_t$.
It is clear that $\dim M_a=\# W$.

The same definitions make sense for $\calH_{nil}$ instead of $\calH_t$. Namely, for any
$a\in \grt$ we define
the principal series representation $M_a$ of $\calH_{nil}$ by
$$
\oM_a=\calH_{nil}\underset{\Sym(\grt^*)}\otimes \CC_a.
$$

The following fact is shown in Theorem 1.2.2 of \cite{Cher-book} in the case of the algebra $\calH_t$;
the same proof as in {\em loc. cit.} works for $\calH_{nil}$.

\begin{prop}\label{principal series}
\begin{enumerate}
\item
There are isomorphisms
$$
\calM_t\simeq \calH_t\underset{\CC[W]}\otimes \CC;\quad
\ocM\simeq \calH_{nil}\underset{\CC[\oW]}\otimes \CC,
$$
where by $\CC[\oW]\subset\calH_{nil}$ we mean the span of all the $\ow$.
\newline

\smallskip
For generic $(a,t)\in\grt\x\CC$ one has:

\smallskip
\item
The modules $M_{a,t}$ and $\oM_a$ are irreducible.
\item
For any $w\in W$ there are isomorphisms
$$
M_{a,t}\simeq M_{w(a),t};\quad \oM_a\simeq \oM_{w(a)}.
$$
\item
Let $\xi$ be the image of $a$ in $\grt/W$. Then there are isomorphisms
$$
M_{a,t}\simeq \calM_{\xi,t};\quad M_a\simeq \ocM_{\xi}.
$$
\end{enumerate}
\end{prop}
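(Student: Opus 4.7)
The plan is to follow Cherednik's argument (\cite{Cher-book}, Theorem 1.2.2), treating $\calH_t$ and $\calH_{nil}$ in parallel; I describe the $\calH_t$ case, the nil case being identical after obvious modifications. For part (1), the PBW theorem gives $\calH_t\cong\Sym(\grt^*)\otimes\CC[W]$ as a left $\Sym(\grt^*)$-module, so $\calH_t\otimes_{\CC[W]}\CC$ is free of rank one over $\Sym(\grt^*)$ with generator $1\otimes 1$. On the other side $\calM_t=\Sym(\grt^*)[t]$ is manifestly free of rank one over $\Sym(\grt^*)$, and the defining formula \eqref{action-formula} yields $(1-s_i)\cdot 1 = 0$, so $1\in\calM_t$ is $W$-invariant. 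The $\calH_t$-linear map $h\otimes 1\mapsto h\cdot 1$ is therefore well-defined and sends generator to generator over $\Sym(\grt^*)$, hence is an isomorphism.

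Parts (2) and (3) rest on the \emph{intertwining element}
$$
I_i := x_{\alp_i}\, s_i - t \,\in\, \calH_t,
$$
which by a short computation using $s_i x_\lam = x_{s_i(\lam)} s_i + t(\alp_i^\vee,\lam)$ satisfies $I_i x_\lam = x_{s_i(\lam)} I_i$ inside $\calH_t$. Thus if $v$ is an $x$-eigenvector of weight $a$, then $I_i\cdot v$ is an $x$-eigenvector of weight $s_i(a)$. Applying the product $I_{i_k}\cdots I_{i_1}$ to the cyclic vector $1\otimes 1\in M_{a,t}$, for any reduced expression $w = s_{i_k}\cdots s_{i_1}$, produces a vector $v_{w(a)}\in M_{a,t}$ of $x$-weight $w(a)$; for generic $a$ the weights $\{w(a)\}_{w\in W}$ are pairwise distinct, so $\{v_{w(a)}\}_{w\in W}$ forms an eigenbasis. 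Sending the cyclic vector of $M_{a,t}$ to $I_i\cdot(1\otimes 1)\in M_{s_i(a),t}$ defines the intertwiner of (3); the identity $I_i^2 = t^2 - x_{\alp_i}^2$ (another direct verification) shows the two intertwiners in opposite directions compose to scalar multiplication by $t^2-\alp_i(a)^2$, which is nonzero for generic $(a,t)$. For (2), any nonzero $\calH_t$-submodule $N\subset M_{a,t}$ is stable under $\Sym(\grt^*)$ and so decomposes as a sum of eigenlines $\CC v_{w(a)}$; since the $I_i$ permute these lines transitively, $N = M_{a,t}$.

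For (4), the restriction of $M_{a,t}$ to $\CC[W]\subset\calH_t$ is the regular representation by PBW, so $M_{a,t}$ contains a unique (up to scalar) $W$-invariant vector $v^W$. Since $\Sym(\grt^*)^W$ lies in the center of $\calH_t$, it acts on the irreducible $M_{a,t}$ by the central character corresponding to the $W$-orbit of $a$, i.e.\ by $\xi$. Tensoring the isomorphism of (1) over $\Sym(\grt^*)^W$ presents $\calM_{\xi,t}$ as the universal $\calH_t$-module generated by a $W$-invariant vector of central character $\xi$, so sending this generator to $v^W$ defines a nonzero $\calH_t$-map $\calM_{\xi,t}\to M_{a,t}$. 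By the Chevalley-Shephard-Todd theorem $\Sym(\grt^*)$ is free of rank $|W|$ over $\Sym(\grt^*)^W$, whence $\dim\calM_{\xi,t}=|W|=\dim M_{a,t}$; irreducibility of the target from (2) then forces this map to be an isomorphism. The main technical point will be delineating the precise generic locus: it amounts to checking that the rational intertwining factors $t^2-\alp(a)^2$ attached to each positive root are nonzero, and that these intertwiners, normalized appropriately, satisfy the braid relations so that the vector $v_{w(a)}$ is independent (up to a nonzero scalar) of the reduced expression chosen for $w$.
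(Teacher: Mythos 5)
Your proof is correct and reproduces the standard intertwiner argument which the paper simply cites from Cherednik (Theorem 1.2.2 of \cite{Cher-book}) without repeating. The key computations --- the intertwining relation $I_i x_\lambda = x_{s_i(\lambda)}I_i$ for $I_i = x_{\alpha_i}s_i - t$, the identity $I_i^2 = t^2 - x_{\alpha_i}^2$, and the rank-$|W|$ count from Chevalley--Shephard--Todd in part (4) --- all check out, as does the nil-Hecke variant (where the analogous intertwiner satisfies $\overline{I}_i^{\,2}=1$, so only the regularity of $a$ is needed).
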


%The first assertion of Proposition \ref{principal series} is obvious. Namely, it is clear
%from (\ref{action-formula}) that the element
%$1\in\calM_t\simeq \Sym(\grt^*)[t]$ is invariant under all the $s_i$ and thus it is $W$-invariant.
%Therefore we get a canonical morphism $\calH_t\underset{\CC[W]}\otimes \CC\to\calM_t$ of
%$\calH_t$-modules. Moreover, these modules are free of rank one over $\Sym(\grt^*)[t]$ and the morphism
%sends a free generator of $\calH_t\underset{\CC[W]}\otimes \CC$ to a free generator of $\calM_t$.
%Hence this morphism is an isomorphism. The same argument works $\ocM$ and $\calH_{nil}$.

%The assertions (2), (3) and and (4) follow from Theorem 1.2.2 of \cite{Cher-book} in the case of the algebra $\calH_t$;
%the same proof as in {\em loc. cit.} works for $\calH_{nil}$.
%----------------------------------------------------------------------------------------------------------

\subsection{The Calogero-Moser system}\label{calogero}
The trigonometric Calogero-Moser (or CM for short) quantum integrable system is an embedding
$\eta_{CM,t}$ of the commutative algebra $\Sym(\grt^*)^W$ into the algebra $\calD(T^{\vee}_{\reg})$
of differential operators on $T^{\vee}_{\reg}$, which depends on a parameter $t\in \CC$.
This embedding is characterized by the following properties:

\medskip
CM1) For any $f\in\Sym(\grt^*)^W$ the highest symbol of $\eta_{CM,t}(f)$ is equal to $f$;

CM2) Let us choose a non-degenerate $W$-invariant quadratic form on $\grt^*$; let $C$ be the corresponding
element of $\Sym^2(\grt^*)^W$ and let $\Del$ denote the corresponding Laplacian on $T^{\vee}$ (this is a $W$-invariant
differential operator of order 2 on $T^{\vee}$ with constant coefficients). Then
$$
\eta_{CM,t}(C)=
\Del-t(t-1)\sum\limits_{\alp\in R_+}
\frac{(\alp^{\vee},\alp^{\vee})}{(q^{\alp^{\vee}/2}-q^{-\alp^{\vee}/2})^2}.
$$
Using the map $\eta_{CM,t}$ we may consider $\calD(T^{\vee}_{\reg})[t]$ as a
$\calD(T^{\vee}_{\reg})\otimes \Sym(\grt^*)^W[t]$-module: here $\calD(T^{\vee}_{\reg})$
acts by left multiplication and any  $f\in\Sym(\grt^*)^W$ acts by right multiplication
by $\eta(f)$. We shall call this the Calogero-Moser $\calD$-module and denote it by
$\CM_t$. For any $\xi\in\grt/W$ we denote by $\CM_{\xi,t}$ the specialization of
$\CM_t$ to $\xi$ (i.e. $\calM_{\xi,t}=\calM_t\underset{\Sym(\grt^*)^W}\otimes \CC_{\xi}$).

On the other hand, consider $\calO(T^{\vee}_{\reg})\otimes \calM_t$. Since $\Sym(\grt^*)^W$ acts on
$\calM_t$ by endomorphisms of the $\calH_t$-module structure, by using the connection
$\nabla$ we may view it as a $\calD(T^{\vee}_{\reg})\otimes \Sym(\grt^*)^W[t]$-module.
According to Cherednik (cf. \cite{Cher-many-body} or \cite{Cher-book} Theorem 1.2.11) and Matsuo \cite{Mats} we have
the following result:
%-----------------------------------------------------------------------------------
\begin{prop}\label{cher-matsuo}
There exists an isomorphism
$$
\calO(T^{\vee}_{\reg})\otimes \calM_t\simeq \CM_t
$$
of $\calD(T^{\vee}_{\reg})\otimes\Sym(\grt^*)^W[t]$-modules.
In particular, for any $\xi\in \grt/W$ there exists an isomorphism
$$
\calO(T^{\vee}_{\reg})\otimes \calM_{\xi,t}\simeq \CM_{\xi,t}
$$
of $\calD(T^{\vee}_{\reg})$-modules.
\end{prop}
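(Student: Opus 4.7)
The plan is to follow the classical Matsuo--Cherednik approach and construct an explicit morphism $\mu : \CM_t \to \calO(T^\vee_\reg) \otimes \calM_t$ of $\calD(T^\vee_\reg) \otimes \Sym(\grt^*)^W[t]$-modules. Since $\CM_t = \calD(T^\vee_\reg)[t]$ is cyclic over $\calD(T^\vee_\reg) \otimes \Sym(\grt^*)^W[t]$, generated by $1 \in \calD(T^\vee_\reg)$, giving $\mu$ is equivalent to choosing an element $\Phi := \mu(1) \in \calO(T^\vee_\reg) \otimes \calM_t$ satisfying
$$
\eta_{CM,t}(f) \cdot \Phi \;=\; f \cdot \Phi \qquad \text{for every } f \in \Sym(\grt^*)^W,
$$
where the left-hand side acts through the $\calD$-module structure given by $\nabla$, and the right-hand side acts through the central embedding $\Sym(\grt^*)^W \hookrightarrow \calH_t$ on the $\calM_t$ factor. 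The natural candidate for $\Phi$ is the cyclic section $1 \otimes \mathbf{1}$, where $\mathbf{1} \in \calM_t$ is the $W$-invariant generator provided by Proposition \ref{principal series}(1), possibly multiplied by a gauge factor $\delta^{-t}$ with $\delta = \prod_{\alpha > 0}(q^{\alpha^\vee} - 1)$, as suggested by the comparison between $\nabla$ and Cherednik's $\nabla'$ in Section \ref{affine KZ}.

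The core of the argument is verifying the identity above for the Casimir $C \in \Sym^2(\grt^*)^W$. Fix an orthonormal basis $\{\lambda_i\}$ of $\grt^*$, and expand $\sum_i \nabla_{\lambda_i}^2 \Phi$ using \eqref{connection}. Since $s_\alpha \mathbf{1} = \mathbf{1}$, the first application of $\nabla_{\lambda_i}$ to $1 \otimes \mathbf{1}$ produces only $-1 \otimes x_{\lambda_i} \mathbf{1}$. Applying $\nabla_{\lambda_i}$ a second time, the $(s_\alpha - 1)$-factor now acts nontrivially, and via the graded Hecke relation $s_\alpha x_{\lambda_i} = x_{s_\alpha(\lambda_i)} s_\alpha + t(\alpha^\vee, \lambda_i)$ it contributes the combination $(\alpha^\vee, \lambda_i)(t - x_\alpha) \mathbf{1}$. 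Summing over $i$, using $\sum_i (\alpha^\vee, \lambda_i)^2 = (\alpha^\vee,\alpha^\vee)$, and incorporating the gauge twist, the rational-function corrections combine into precisely the trigonometric potential $t(t-1) \sum_{\alpha > 0} (\alpha^\vee,\alpha^\vee) / (q^{\alpha^\vee/2} - q^{-\alpha^\vee/2})^2$ of $\eta_{CM,t}(C)$, so that $\eta_{CM,t}(C) \cdot \Phi$ matches $1 \otimes C \mathbf{1}$ as required. This is Matsuo's original calculation.

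For higher-order $f \in \Sym(\grt^*)^W$ one may either run the analogous direct computation, or argue abstractly: both $\eta_{CM,t}(f)$ and the operator defined on $\calO(T^\vee_\reg) \otimes \calM_t$ by the action of $f$ via the $x_\lambda$'s have the same leading symbol $f$, commute with the Casimir already matched, and pairwise commute among themselves (since $\Sym(\grt^*)^W$ is central in $\calH_t$), so by standard uniqueness results for the quantum Calogero--Moser integrable system they must coincide. Having established the compatibility for all $f$, one has a well-defined morphism $\mu$. To conclude $\mu$ is an isomorphism, specialize to a generic $\xi \in \grt/W$: by Proposition \ref{principal series}(4), $\calM_{\xi,t}$ has dimension $|W|$, so $\calO(T^\vee_\reg) \otimes \calM_{\xi,t}$ is a rank-$|W|$ integrable connection on $T^\vee_\reg$, while $\CM_{\xi,t}$ is a holonomic $\calD$-module of the same generic rank. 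Since $\mu$ is a nonzero map between holonomic $\calD$-modules of rank $|W|$ that are generically irreducible, it is an isomorphism after specialization, and flatness over $\Sym(\grt^*)^W$ lifts this to the full statement. The principal obstacle is the Casimir computation itself: identifying the correct gauge factor $\delta^{-t}$ that converts the raw $q^{\alpha^\vee}/(1 - q^{\alpha^\vee})$ expressions arising from $\nabla^2$ into the symmetric potential $1/(q^{\alpha^\vee/2} - q^{-\alpha^\vee/2})^2$ requires a nontrivial trigonometric identity; the extension to higher invariants and the final rank comparison are comparatively routine.
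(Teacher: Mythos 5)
The paper does not prove Proposition~\ref{cher-matsuo}; it recalls it as the Cherednik--Matsuo theorem, citing \cite{Mats} and Theorem~1.2.11 of \cite{Cher-book}, with a remark that the argument for $\calM_{a,t}$ in {\em loc.\ cit.} works in families to give $\calM_t$. Your proposal attempts to reconstruct that cited proof, and the overall strategy---pick a cyclic generator $\Phi\in\calO(T^{\vee}_{\reg})\otimes\calM_t$ on which $\eta_{CM,t}(f)$ and $f$ agree, verify this for the quadratic Casimir by a direct $\nabla^2$ computation, then extend to all of $\Sym(\grt^*)^W$ using uniqueness of the CM integrals and finish by a rank/irreducibility comparison---is indeed the Matsuo--Cherednik route.

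However, the core Casimir computation as you describe it has a genuine flaw. You invoke the relation $s_\alpha x_\lambda - x_{s_\alpha(\lambda)}s_\alpha = t(\alpha^\vee,\lambda)$ for every positive root $\alpha$; in $\calH_t$ this holds \emph{only} for simple reflections. For a non-simple root the commutator picks up extra group-algebra terms. For instance in type $A_2$, with $\alpha=\alpha_1+\alpha_2$ and $s_\alpha=s_1s_2s_1$, unwinding the simple-reflection relation three times gives
$$
s_\alpha x_\lambda - x_{s_\alpha(\lambda)}s_\alpha
= t\left[ (\lambda,\alpha_2^\vee)\,s_2s_1 + (\lambda,\alpha^\vee) + (\lambda,\alpha_1^\vee)\,s_1s_2\right].
$$
When applied to the $W$-invariant vector $\mathbf{1}$ the group elements collapse, but the scalars add up, yielding
$$(s_\alpha-1)\,x_\lambda\,\mathbf{1}=(\lambda,\alpha^\vee)\left(2t-x_\alpha\right)\mathbf{1},$$
not $(\lambda,\alpha^\vee)(t-x_\alpha)\mathbf{1}$. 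In general the coefficient of $t$ is root-dependent, and it is precisely this extra combinatorial data---together with the first-order terms produced by the gauge conjugation---that must reorganize, via a genuinely nontrivial identity over the root system, into the $t(t-1)$ trigonometric potential of CM2. As written, your computation would give the wrong answer for any rank at least two. A secondary issue: $\delta^{-t}(1\otimes\mathbf{1})$ is not an element of $\calO(T^{\vee}_{\reg})\otimes\calM_t$ for non-integral $t$, so one cannot literally take $\Phi$ to be that product; rather, the argument has to be run for the gauge-transformed connection $\nabla'=\delta^{t}\nabla\delta^{-t}$ (as Cherednik does), and the gauge twist absorbed into the normalization of the CM Hamiltonian, not into a choice of vector in the module.
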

%--------------------------------------------------------------------------------------------
\noindent
{\bf Remark.} In fact, in \cite{Mats} Theorem \ref{cher-matsuo} is proved for generic $(a,t)\in\egt$
for $M_{a,t}$ instead
of $\calM_{a,t}$. In \cite{Cher-book} Theorem \ref{cher-matsuo} is proved for $\calM_{a,t}$ (for any $a$)
and it is easy to see that the proof "works in families" (i.e. for $\calM_t$ itself).
%--------------------------------------------------------------------------
\subsection{The classical Calogero-Moser system}The quantum Calogero-Moser system has it quasi-classical analog --
{\em the classical Calogero-Moser system}. This is a $\CC[t]$-linear embedding
$$
\eta_{CM,t}^{cl}:\Sym(\grt^*)^W[t]=\CC[\grg]^{\bfG}\hookrightarrow
\calO(T^*(T^{\vee}_{\reg})\x \CC)=\calO(T^{\vee}_{\reg}\x \grt)[t]
$$
whose image consists of Poisson commuting functions and which satisfies obvious analogs of the conditions CM1, CM2.

%----------------------------------------------------------------------------------------------
\subsection{The monodromy}\label{monodromy}
Let now $\bfH_v$ denote the "usual" affine Hecke algebra; by the definition this is a $\CC[v,v^{-1}]$-algebra,
generated by elements $X_{\lam}$ for $\lam\in \Lam$ and $T_w$ for $w\in W$ subject to the well-known relations.
In particular, $\bfH_v$ is a quotient of the group algebra of $\CC[\hatB_W]$ of the affine braid
group $\hatB_W=\pi_1(T^{\vee}_{\reg}/W)$ associated with the Weyl group $W$; also
the subalgebra of $\bfH_v$ generated by the $X_{\lam}$'s is just $\CC[\Lam]$ which is the same
as the algebra of regular functions on $T$.  The subalgebra of $\bfH_v$ generated by all
the $T_w$ is the {\em finite Hecke algebra} $\bfH_v'$. It is a deformation of
the group algebra $\CC[W]$ and we shall denote by $\mathds{1}$ the corresponding deformation
of the trivial $\CC[W]$-module. For any $z\in T$ we shall denote by $\CC_z$ the corresponding
1-dimensional module over $\CC[\Lam]$.
Abusing notation, we shall sometimes think about $v$ as an element of $\CC^*$ rather than as a formal variable.
Let us set
$$
\bfM_v=\bfH_v\underset{\bfH_v'}\otimes \mathds{1}.
$$
The algebra $\CC[\Lam]^W=\calO(T/W)$ acts on $\bfM_v$ on the right (in fact, this algebra
is the center of $\bfH_v$ and therefore it acts on every $\bfH_v$-module); for any $z\in T/W$
we shall denote by
$\bfM_{z,v}$ the specialization of $\bfM_v$  at $z$. This is a finite-dimensional $\bfH_v$-module of
dimension $\# W$, which is known to be irreducible for generic $z$.

Let us denote $\calH_t-mod^f$ the category of finite-dimensional $\calH_t$-modules; similarly,
let us denote by $\bfH_v-mod^f$ the category of finite-dimensional $\bfH_v$-modules.
Cherednik (cf. e.g. \cite{Cher-book}, Theorem 1.2.8) shows the following:
%-------------------------------------------------------------------------------------------------------
\begin{prop}\label{cherednik-monodromy}
There exists  a functor
$\calI:\calH_t-mod^f\to \bfH_v-mod^f$, which depends on a choice of a point $q_0\in T^{\vee}_{\reg}/W$,
satisfying the following conditions:
\begin{enumerate}
\item
$\calI(\calM_{\xi,t})=\bfM_{z,v}$ where $z=e^{2\pi i \xi}$ and $v=e^{2\pi i t}$.
\item
For any $M$ in $\calH_t-mod^f$ let us consider the corresponding affine KZ connection, as a connection
over $T^{\vee}_{\reg}/W$ on the trivial bundle with fiber $M$. Then the monodromy representation
of $\CC[\pi_1(T^{\vee}_{\reg}/W, q_0)]$ on $M$ factors through $\bfH_v$ and the resulting $\bfH_v$-module
is $\calI(M)$.
\end{enumerate}
\end{prop}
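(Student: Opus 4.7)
The plan is to define $\calI$ as the monodromy functor of the affine KZ connection and verify the two stated properties.

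Given $M \in \calH_t\text{-mod}^f$, form the trivial bundle $\calO(T^{\vee}_{\reg}) \otimes M$ equipped with the connection $\nabla$ of \eqref{connection}. Since $W$ acts freely on $T^{\vee}_{\reg}$ and $\nabla$ is $W$-equivariant with trivial $W$-action on the fiber (as noted in Section \ref{affine KZ}), the connection descends to an integrable connection on the trivial $M$-bundle over $T^{\vee}_{\reg}/W$. Parallel transport based at $q_0$ then yields a representation of $\pi_1(T^{\vee}_{\reg}/W, q_0) = \hatB_W$ on $M$. Define $\calI(M)$ to be $M$ equipped with this representation; functoriality in $M$ is clear.

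To prove that $\calI(M)$ descends to an $\bfH_v$-module, I would verify the quadratic Hecke relation on each braid generator $T_s$ by a local residue computation. Near the wall $q^{\alp^{\vee}}=1$ for a simple root $\alp$, only the summand of \eqref{connection} indexed by $\alp$ is singular, and the residue of the corresponding 1-form is proportional to $t(s_{\alp}-1)$, whose eigenvalues are $0$ and $-2t$ on any representation. Hence the full monodromy of $\nabla$ around the wall in $T^{\vee}_{\reg}$ has eigenvalues $1$ and $v^{\pm 2}$, where $v = e^{2\pi i t}$. A braid generator $T_s$ corresponds to a loop in $T^{\vee}_{\reg}/W$ around the image of the wall; such a loop lifts to a path between $s_\alp$-equivalent points in $T^{\vee}_{\reg}$, so the monodromy of the descended connection along it is one of the square roots of the full monodromy, giving eigenvalues that satisfy the quadratic relation $(T_s-1)(T_s+v^{\pm 1})=0$ of $\bfH_v$ with the standard sign choice. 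The commutation of the $X_\lam$'s, the braid relations among the $T_s$'s, and the $X$-$T$ relations hold already in $\hatB_W$, so the monodromy factors through $\bfH_v$.

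For the identification $\calI(\calM_{\xi,t}) \simeq \bfM_{z,v}$ with $z=e^{2\pi i \xi}$, I would apply Proposition \ref{cher-matsuo} to identify $\calO(T^{\vee}_{\reg})\otimes \calM_{\xi,t}$ with the Calogero-Moser $\calD$-module $\CM_{\xi,t}$, reducing the question to the monodromy of $\CM_{\xi,t}$. After undoing the gauge transformation $\del^{-t}$ of Section \ref{affine KZ} to pass to Cherednik's form, Heckman-Opdam theory produces a basis of formal solutions near the large-radius point $q=0$ of the shape $q^{w(\xi)}(1+O(q^{\alp_i^{\vee}}))$ indexed by $w\in W$. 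The lattice generators $X_\lam$ then act on these solutions by the scalars $e^{2\pi i (w(\xi),\lam)}$, realising the right action of $\calO(T^{\vee}/W)$ at $z$, while the $T_s$'s permute adjacent Weyl sectors through the Opdam connection coefficients, realising exactly the induction from the trivial module $\mathds{1}$ of the finite Hecke subalgebra $\bfH_v'$. This matches the presentation of $\bfM_{z,v}$.

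The main obstacle is this last step: explicitly tracking the Heckman-Opdam connection coefficients, and in particular their compatibility with the Cherednik gauge $\del^{-t}$, is quite technical. A cleaner route (and the one effectively taken in \cite{Cher-book}) is to argue generically and by deformation. Both $\calI(\calM_{\xi,t})$ and $\bfM_{z,v}$ are $|W|$-dimensional with the correct spectrum for the lattice part, so by the generic irreducibility in Proposition \ref{principal series} and the rigidity of finite-dimensional families of $\bfH_v$-modules one obtains the isomorphism for generic $(\xi,t)$ and then extends it to all parameters by continuity, using the classical limit $t=0$ (where $\nabla$ reduces to an abelian system and the identification is explicit) as a normalization.
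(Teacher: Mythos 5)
The paper does not prove this proposition; it is cited directly from Cherednik's book (Theorem~1.2.8 of \cite{Cher-book}), so there is no internal argument to compare your proposal against. What you have written is a plausible reconstruction of the underlying theorem, and the high-level architecture --- define $\calI$ by descending $\nabla$ to $T^{\vee}_{\reg}/W$ and taking monodromy, verify the Hecke relation by a local residue computation at the walls, and identify $\calI(\calM_{\xi,t})$ with $\bfM_{z,v}$ via Proposition~\ref{cher-matsuo} plus a rigidity/deformation argument --- matches the shape of Cherednik's proof and is what a reader would expect.

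Two steps in your sketch are materially incomplete, though, and you are right to flag them. First, the residue computation only tells you that the monodromy of the \emph{full} loop in $T^{\vee}_{\reg}$ around a wall has eigenvalues $1$ and $e^{\pm 4\pi i t}$, and that the braid generator $T_s$ (a half-loop in $T^{\vee}_{\reg}/W$) squares to that operator up to the regular part of the connection; this alone does not pin down which of the four possible pairs of square roots occurs, nor does it produce the standard quadratic relation $(T_s - v)(T_s + v^{-1}) = 0$. The relation you wrote, $(T_s - 1)(T_s + v^{\pm 1}) = 0$, has the wrong set of eigenvalues, and fixing this requires tracking the gauge factor $\del^{-t}$ (which is multi-valued and genuinely shifts the local exponents) and doing the one-dimensional $SL_2$ computation explicitly. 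Second, the identification $\calI(\calM_{\xi,t}) \simeq \bfM_{z,v}$ is not obtainable from generic irreducibility and dimension count alone: one must also match the action of the lattice part $\CC[\Lambda]$ on a basis of local solutions (your first route) or exhibit an explicit intertwiner at a normalization point and then invoke rigidity (your second route); neither is carried out here. Both gaps are acknowledged in your writeup, and both are precisely the content of Cherednik's proof, so the proposal is a sound outline rather than a complete argument.
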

%------------------------------------------------------------------------------------------------------
In other words, one may say that the functor $\calI$ is provided by the monodromy of the affine KZ connection.
%---------------------------------------------------------------------------------------------------------
\section{Affine Hecke algebras via the Steinberg variety}\label{hecke-geometry}
In this section, we recall how the algebras $\calH_t$ and $\calH_{nil}$ appear geometrically and give a more detailed statement of our main result.
%-------------------------------------------------------------------------------------------------------------------
\subsection{The Steinberg variety}
Recall that we have the natural proper (Springer) map $f:T^*\calB\to\calN$.
The Steinberg variety $\St$ is defined as follows:
$$
\St=T^*\calB\underset{\calN}\x T^*\calB.
$$
Explicitly, $\St$ parametrises triples $(\grb_1,\grb_2,x)$ where $\grb_1$ and $\grb_2$ are two Borel
subalgebras in $\grg$ and $x$ is a nilpotent element in $\grb_1\cap \grb_2$.
Alternatively, $\St$ can be defined as the union of the conormal bundles to the $G$-orbits in
$\calB\x\calB$. In particular, $\St$ is equi-dimensional of dimension $2\dim\calB$ and
its irreducible components are parametrised by elements of $W$. The variety $\St$ is endowed with
a natural action of the group $\bG = G\x \CC^*$ where $G$ acts on everything by conjugation and the
multiplicative group $\CC^*$ dilates $x$ (and doesn't change $\grb_1$ and $\grb_2$).
We refer the reader to Section 3.3 of
\cite{chriss-ginz} for further details about $\St$.
%-------------------------------------------------------------------------------------------------

%--------------------------------------------------------------------------------------------------------------------------
\subsection{Borel-Moore homology and Lusztig's construction}\label{bm-lusztig}
Let $H_*^{\bG}(\St)$ denote the $\bG$-equivariant Borel-Moore homology of $\St$.
According to \cite{chriss-ginz} the space is endowed with a natural structure of an associative
algebra. Moreover, this algebra acts naturally on $H_{\bG}(T^*\calB)$ as well as on
$H^*_{Z_n\x\CC^*}(\calS_n)$.

According to \cite{lus-cusp} we have a natural isomorphism\footnote{In fact, in \cite{lus-cusp} this isomorphism is constructed in a much more general situation}
\begin{equation}\label{lusztig}
H_*^{\bG}(\St)\simeq \calH_t.
\end{equation}
Under this isomoprhism $t$ corresponds to the generator of $H^*_{\CC^*}(pt)$ (note that $H_*^{\bG}(\St)$ is a module
over $H^*_{\bG}(pt)$).

Similarly, one can define an associative algebra structure on $H^G_*(\calB\x\calB)$. In this case one has the isomorphism
\begin{equation}\label{lusztig-nil}
H^G_*(\calB\x\calB)\simeq\calH_{nil}.
\end{equation}
The construction of the above isomorphism. Let us just note (cf. \cite{chriss-ginz}) that the algebra $H^*_{\bG}(\St)$
acts naturally on $H_{\bG}(T^*\calB)$ (and more generally on $H^*_{Z_n\x\CC^*}(\tcS_n)$ for any $n\in\calN$) and
the algebra $H^G_*(\calB\x\calB)$ acts on $H^*_G(\calB)$.
%-------------------------------------------------------------------------------------------------------------
\subsection{Explicit construction: geometry}\label{lusztig-explicit}
The isomorphism (\ref{lusztig}) can be described as follows. Namely, according to \cite{lus-cusp}, Section 4, the element
$x_{\lam}\in\calH_t$ corresponds just to the push-forward of the class $D_{\lam}$ under the diagonal embedding
$T^*\calB\hookrightarrow \St$. In particular, the action of $x_{\lam}$ on $H^*_{\bG}(T^*\calB)$ (or, more generally,
on $H^*_{Z_n\x\CC^*}(\tcS_n)$) is given by multiplication by $D_{\lam}$. The description of the image
of $W$ in $H_*^{\bG}(\St)$ is more involved; however one can still describe explicitly the image
of a simple reflection $s_i\in W$ (cf. \cite{lus-cusp}, Section 3). Let us recall this construction.

First of all, for each vertex $i$ of the Dynkin diagram of $G$ let $P_i$ the corresponding sub-minimal
\footnote{Here and and in the sequel the word "subminimal" means that the semi-simple rank of the corresponding
Levi is equal to 1} parabolic and let $\calP_i=G/P_i$ be the variety parametrizing all subgroups of $G$ which are
conjugate to $P_i$. We have the natural projection $p_i:\calB\to\calP_i$ which is a locally trivial $G$-equivariant
$\PP^1$-fibration.
Let $Y_i=\calB\underset{\calP_i}\x\calB$. Then $Y_i$ is a closed subvariety of $\calB\x\calB$ and its
fundamental class $[Y_i]\in H_*^G(\calB\x\calB)$ is equal to $\os_i$. The corresponding operator
on $H^*_G(\calB)$ is just $p_i^*(p_i)_*$.

Similarly, let us now denote by $W_i$ the conormal bundle to $\calB\underset{\calP_i}\x\calB$.
This is a smooth closed $\bG$-invariant subvariety of $\St$ and thus its fundamental class
 $[W_i]$ is a well defined element of $H_*^{\bG}(\St)$ which is equal to $s_i$.
In particular, if we view $W_i$ as a correspondence from
$T^*\calB$ to itself, then its action on $\bG$-equivariant cohomology of $T^*\calB$ is equal to the action
of $s_i-1$.
%---------------------------------------------------------------------------------------------------------------------
\subsection{Explicit construction: algebra}
The above actions of $\calH_t$ on $H^*_{\bG}(T^*\calB)$ and of $\calH_{nil}$ on $H^*_G(\calB)$ can be described
explicitly in the following sense. First of all, we have the natural isomorphisms
$$
H^*_G(\calB)\simeq H^*_B(pt)=H^*_T(pt)=\Sym(\grt^*)=\ocM.
$$

Similarly, by using the pull-back map with respect to the projection $T^*\calB\to\calB$ we may identify
$H^*_{\bG}(T^*\calB)$ with $H^*_{\bG}(\calB)=\Sym(\grt^*)[t]=\calM_t$.
Then we have the following
\begin{prop}\label{geometry-algebra}
The above isomorphism $H^*_G(\calB)\simeq \ocM$ is an isomorphism of $\calH_{nil}$-modules.
Similarly, the above isomorphism $H^*_{\bG}(T^*\calB)\simeq \calM_t$ is an isomorphism
of $\calH_t$-modules.
\end{prop}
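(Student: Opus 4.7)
The plan is to exploit the cyclic presentations from Proposition \ref{principal series}: one has $\calM_t \simeq \calH_t \otimes_{\CC[W]} \CC$ and $\ocM \simeq \calH_{nil} \otimes_{\CC[\oW]} \CC$, so each module is cyclic, generated by $1 \otimes 1$ whose annihilator is the left ideal generated by $\{s_i - 1 : i \in I\}$ (resp.\ by $\{\os_i : i \in I\}$). It therefore suffices to identify the cyclic vector with the unit class on the geometric side and to verify the annihilator relations; the remainder of the action is forced by $\calH_t$- (resp.\ $\calH_{nil}$-) linearity.

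Under the standard identifications $H^*_G(\calB) = H^*_B(\textup{pt}) = \Sym(\grt^*)$ and $H^*_\bG(T^*\calB) = \Sym(\grt^*)[t]$, the unit corresponds to $1 \in \Sym(\grt^*)$. By Section \ref{lusztig-explicit}, the element $x_\lam$ acts by cup product with $D_\lam = c_1(\calO(\lam))$, and $D_\lam$ corresponds to $\lam \in \Sym^1(\grt^*)$ under the Borel isomorphism. Hence the $\Sym(\grt^*)$-subalgebra of $\calH_t$ or $\calH_{nil}$ acts by exactly the same multiplication formula on the cohomology groups as it does on $\calM_t$ and $\ocM$.

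The nontrivial step is to verify the vanishing of the simple-reflection generators applied to the cyclic vector. For the nil-Hecke case, $\os_i \cdot 1 = p_i^*(p_i)_*(1) = 0$, since the Gysin pushforward of $1 \in H^0(\calB)$ along the $\PP^1$-fibration $p_i : \calB \to \calP_i$ vanishes for degree reasons. For the Hecke case, $(s_i - 1) \cdot 1 = [W_i] \cdot 1 = (\pi_2)_*[W_i]$ in equivariant Borel--Moore homology. A direct description of the conormal bundle to $Y_i \subset \calB \times \calB$ identifies the image of $\pi_2 : W_i \to T^*\calB$ with the conormal variety to the $p_i$-fibration inside $T^*\calB$, a subvariety of complex dimension $2\dim\calB - 1$. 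Since $(\pi_2)_*[W_i]$ lies in $H^{BM,\bG}_{4\dim\calB}(T^*\calB)$ but is supported on a subvariety of one smaller complex dimension, the general vanishing of equivariant Borel--Moore homology above twice the complex dimension forces $(\pi_2)_*[W_i] = 0$.

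Combining these, one obtains $\calH_{nil}$-linear and $\calH_t$-linear maps $\ocM \to H^*_G(\calB)$ and $\calM_t \to H^*_\bG(T^*\calB)$ sending $1 \mapsto 1$. Each is $\Sym(\grt^*)$- (resp.\ $\Sym(\grt^*)[t]$-) linear by the paragraph above, and both source and target are free of rank one over this polynomial ring with generator $1$, so the maps are isomorphisms. The main obstacle is the conormal calculation $(\pi_2)_*[W_i] = 0$: once the image of $\pi_2$ is correctly identified and the support/dimension bound in equivariant Borel--Moore homology is applied, the identifications fall out formally.
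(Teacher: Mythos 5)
The paper states Proposition \ref{geometry-algebra} without proof, treating it as a standard consequence of Lusztig's construction, so there is no argument in the text to compare against directly. Your proof is correct and gives a clean, self-contained route. Rather than directly verifying that the convolution action of $[W_i]$ on $H^*_{\bG}(T^*\calB)\simeq\Sym(\grt^*)[t]$ reproduces the explicit Demazure--Lusztig formula \eqref{action-formula} (which is what one would do if following the references cited for \eqref{lusztig}), you use the cyclic presentations from Proposition \ref{principal series}(1) to reduce the whole verification to the annihilation relations $(s_i-1)\cdot 1 = 0$ and $\os_i\cdot 1 = 0$, and then obtain these from a support/dimension argument. Both key steps check out: $(p_i)_*(1)=0$ since the Gysin pushforward along a $\PP^1$-fibration drops degree by two; and for the Hecke case, the image of the second projection $W_i\to T^*\calB$ is precisely the subbundle $p_i^*T^*\calP_i\subset T^*\calB$ of complex dimension $2\dim\calB-1$, so the pushforward of $[W_i]$ --- a class that would have to live in equivariant Borel--Moore degree $4\dim\calB$ --- lands in a group that vanishes because the support has strictly smaller complex dimension. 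One small point worth making explicit: the $\calH_t$-linear (resp.\ $\calH_{nil}$-linear) map you construct out of the cyclic presentation sends $1\mapsto 1$ and is in particular $\Sym(\grt^*)[t]$-linear (resp.\ $\Sym(\grt^*)$-linear), which forces it to \emph{coincide} with the Borel isomorphism, not merely to be \emph{some} isomorphism; this is needed because the statement asserts that the pre-existing identification is $\calH$-equivariant. You handle this implicitly with the free-rank-one observation, but it deserves a sentence. The trade-off relative to the direct calculation is favorable: you avoid an explicit equivariant localization computation, at the cost of the (easy and correct) identification of $\pi_2(W_i)$.
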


%---------------------------------------------------------------------------------------------------
\subsection{Main result revisited}\label{main-revisited}
With all this context in place, we are now ready to give a reformulation of the main result of this paper.
\begin{thm}\label{main-detailed}
\begin{enumerate}
\item
The operator of quantum multiplication by $D_{\lam}$ in $H^*_{\bfG}(T^*\calB)$ is equal to
$$
x_{\lam} + t \sum\limits_{\alp^{\vee}\in R_+^{\vee}}
(\lambda, \alpha^{\vee}) \frac{q^{\alpha^{\vee}}}{1 - q^{\alpha^{\vee}}} (s_{\alpha}-1) ,
$$
where the action of $\calH_t$ on $H^*_{\bfG}(T^*\calB)$ is the one described above.
\item
The $\bfG$-equivariant quantum connection of $T^*\calB$ is given by Equation \ref{onabla}.
\item
The $\bfG$-equivariant quantum $D$-module of $T^*\calB$ is isomorphic to the Calogero-Moser $\calD$-module
$\CM_t$ (this is an isomorphism of $\calD(T^{\vee}_{\reg})\otimes\Sym(\grt^*)^W[t]$-modules).
\item
The $\bfG$-equivariant quantum cohomology ring of $T^*\calB$ is isomorphic to the ring
of functions on $T^*(T^{\vee}_{\reg})\x \AA^1=T^{\vee}_{\reg}\x \grt\x \AA^1$, where the embedding
$H^*_{\bfG}(pt)=\CC[\grg]^{\bfG}=\Sym(\grt^*)^W[t]\hookrightarrow \calO(T^*(T^{\vee}_{\reg})\x\AA^1)$ is given by the classical Calogero-Moser map $\eta_{CM,t}^{cl}$.
\end{enumerate}
\end{thm}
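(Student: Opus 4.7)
The plan is to prove the four parts sequentially, with part (1) carrying the main geometric content and the remaining parts following by algebraic identification.

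For part (1), I would first establish the abstract decomposition \eqref{abstrD} using the correspondence algebra structure of Section \ref{rootsubalgebra}. The key geometric input is that a degree-$d\alpha^\vee$ contribution to $D_\lambda \circ$ comes from curves whose class is a positive multiple of $\alpha^\vee$; by the deformation \eqref{deform}, these curves deform inside the non-affine family $X^\alpha$, so the corresponding correspondences on $H^*(X)$ lie in the root subalgebra $\bS^\alpha$. This reduces the problem to computing, for each positive coroot, a single power series $S_\alpha(z) \in \bS^\alpha \otimes \CC[[z]]$.

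Next, I would carry out the rank-$1$ reduction from Section \ref{rank1}. In the Springer case this is especially clean: for each coroot $\alpha^\vee$, the curve $G_{\alpha^\vee}\cdot[B] \subset \calB$ has conormal bundle isomorphic to $T^*\PP^1$, which already has a single effective class and serves as the rank-$1$ slice, so Kaledin's formal decomposition is not needed. The equivariant quantum cohomology of $T^*\PP^1$ is computable by torus localization: the only stable maps contributing are multiple covers of the zero section, and summing the resulting geometric series yields
$$
S_\alpha(z) = \frac{z}{1-z}(s_\alpha - 1),
$$
where the factor $s_\alpha - 1$ appears because, under Lusztig's isomorphism \eqref{lusztig}, the conormal $W_i \subset \St$ to $\calB \times_{\calP_i}\calB$ represents exactly this element, as recalled in Section \ref{lusztig-explicit}. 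Assembling over all $\alpha^\vee \in R_+^\vee$ gives the formula in (1). Part (2) is then an immediate consequence, since the quantum connection is by definition $d - D\circ$.

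For part (3), I would apply the Cherednik--Matsuo identification, Proposition \ref{cher-matsuo}. By Proposition \ref{geometry-algebra}, the quantum cohomology $H^*_\bG(T^*\calB)$ is identified with the standard $\calH_t$-module $\calM_t$, and part (2) says the quantum connection agrees with the affine KZ connection on $\calM_t$ (modulo the trivial gauge transformation noted in Section \ref{affine KZ}). Proposition \ref{cher-matsuo} then identifies this with $\CM_t$ as a $\calD(T^\vee_{\reg})\otimes \Sym(\grt^*)^W[t]$-module. For part (4), the quantum cohomology ring is the image of the $\Sym(\grt^*)^W[t]$-action by quantum multiplication on $H^*_\bG(T^*\calB)$; passing through the isomorphism of (3), this action is exactly the one coming from $\eta_{CM,t}$. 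Taking the classical (associated graded) limit of $\eta_{CM,t}$ produces $\eta^{cl}_{CM,t}$, and the ring of functions on $T^\vee_{\reg}\times\grt\times\AA^1$ it carves out is the quantum cohomology ring.

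The main obstacle is justifying the abstract form \eqref{abstrD} with the claimed $\bS^\alpha$-valued coefficients, which requires a careful analysis of the reduced virtual fundamental class on families with algebraic curve classes pinned to a given hyperplane (the content promised in Section \ref{GWprelim}). Once that framework is in place, the rank-$1$ computation is routine; the remaining steps are essentially bookkeeping under the Steinberg-algebra identifications of Section \ref{hecke-geometry} and the Cherednik--Matsuo theorem.
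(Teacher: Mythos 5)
Your outline of parts (2)--(4) matches the paper exactly: (2) is a reformulation of (1); (3) is the Cherednik--Matsuo identification (Proposition \ref{cher-matsuo}) applied through Proposition \ref{geometry-algebra}; and (4) is the classical-limit statement that follows. For part (1) you have the right skeleton --- reduced class supplies the overall factor of $t$, the pushforward to the Steinberg variety lies in the correspondence algebra, and the computation ultimately reduces to $T^*\PP^1$ --- but the intermediate mechanism you describe is not correct as stated and omits the two arguments that actually carry the proof.

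First, the claim that ``the curve $G_{\alpha^\vee}\cdot[B]\subset\calB$ has conormal bundle isomorphic to $T^*\PP^1$'' is geometrically false: the conormal variety to a $\PP^1$ inside $T^*\calB$ is a Lagrangian of dimension $\dim\calB$, so it is $T^*\PP^1$ only when $G=\mathrm{SL}_2$. The paper's actual route to $T^*\PP^1$ is different. One deforms $X$ inside the Grothendieck resolution along a generic one-parameter slice that is then pushed to meet each root hyperplane $H_\alpha$ transversely at a distinct point $z_\alpha$; by deformation invariance of the (standard) virtual class for the ambient family, the reduced two-point invariants of $X$ split as a sum over $\alpha\in R_+$ of those of the codimension-one fibers $X_{z_\alpha}$ (Lemma \ref{hyperplanes}). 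These fibers are $X_{z_\alpha}\cong G\times_{M_{z_\alpha}}T^*\PP^1$, a fiber bundle over the affine base $A_{z_\alpha}=G/M_{z_\alpha}$ with fiber $T^*\PP^1$; one then compares the absolute reduced theory with the theory relative to $A_{z_\alpha}$ and finally restricts to a single fiber. Your proposal goes directly from ``curves with class a multiple of $\alpha^\vee$'' to ``compute on $T^*\PP^1$,'' skipping the deformation-invariance step that separates the $\alpha$-contributions and the relative obstruction theory argument that justifies restriction to the $T^*\PP^1$ fiber.

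Second, you do not explicitly make the dimension count of Lemma \ref{correspondences}: because the Steinberg variety is equidimensional of (real) dimension $2\dim X$ and the reduced class lives in $H^{BM,\bG}_{2\dim X}$ with \emph{non-localized} coefficients, the pushforward is forced to be a $\QQ$-linear combination of the irreducible components with constant (non-equivariant) coefficients. This is what lets you evaluate at $t=0$ and work $G$-equivariantly, and it is what permits the geometry above at all (at $t\ne0$ the fibration $\phi$ is not $\bG$-equivariant). Your final $T^*\PP^1$ computation (multiple covers of the zero section, geometric series in $q^{\alpha^\vee}$) is fine, and is in effect what the cited rank-one cases in \cite{bryan,gwan} establish; but to splice it into a proof for general $G$ you need to supply the deformation-splitting and the relative obstruction-theory reduction, which are the real content of Section \ref{proofoftheorem}.
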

The proof of the first assertion of Theorem \ref{main-detailed} occupies the next two Sections. Theorem \ref{main-detailed}(2)
is just a reformulation of Theorem \ref{main-detailed}(1). Theorem \ref{main-detailed}(3) follows from
Theorem \ref{main-detailed}(2) by Proposition \ref{cher-matsuo} and Proposition \ref{geometry-algebra}.
Part \ref{main-detailed}(4) was also known to  Nekrasov and Shatashvili \cite{NekSh} for $G=SL(n)$; in the general case it follows
easily from Theorem \ref{main-detailed}(3).
%------------------------------------------------------------------------------------------------------
\subsection{Monodromy and derived equivalences}\label{monodromy-derived}
The isomorphism (\ref{lusztig}) has an analog in
$K$-theory (in fact, historically, it was discovered before (\ref{lusztig})). Namely, for a
$\bfG$-scheme $Y$ let us denote by
let $K^{\bfG}(Y)=K_0(\Coh^{\bfG}(Y))\otimes \CC$ the Grothendieck group of the category $\Coh^{\bfG}(Y)$ of
$\bfG$-equivariant coherent sheaves on $Y$. Then $K^{\bfG}(\St)$ has a natural structure of an associative algebra
(defined by convolution) which acts on $K^{\bfG}(T^*\calB)$. Then we have:
\begin{prop}\label{kazhdan-lusztig}\cite{kazhdan-lusztig, chriss-ginz}
\begin{enumerate}
\item
There exists a natural isomorphism
$$
K^{\bfG}(\St)\simeq \bfH_v.
$$
\item
$K^{\bfG}(T^*\calB)$ is isomorphic to $\bfM_v$ as a $K^{\bfG}(\St)=\bfH_v$-module.
\end{enumerate}
\end{prop}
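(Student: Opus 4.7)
The plan is to follow the standard convolution-theoretic approach of Kazhdan--Lusztig and Ginzburg, proving (1) and (2) in parallel since the proof of (2) provides a faithful realization which is useful for verifying the relations in (1). The convolution product on $K^{\bfG}$ of fiber products endows $K^{\bfG}(\St) = K^{\bfG}(T^*\calB \times_{\calN} T^*\calB)$ with an associative algebra structure, and simultaneously makes $K^{\bfG}(T^*\calB)$ a module over it via convolution with $T^*\calB \simeq T^*\calB \times_{\mathrm{pt}} \mathrm{pt}$.

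First I would construct an algebra homomorphism $\bfH_v \to K^{\bfG}(\St)$. Send $v$ to the $\bfG$-equivariant class of the trivial line bundle on a point twisted by the tautological character of the $\CC^*$-factor (so that $v^2$ corresponds to the generator of $R(\CC^*)$). Send each lattice generator $X_\lam$ to the pushforward along the diagonal $T^*\calB \hookrightarrow \St$ of the equivariant line bundle $\calO(\lam)$ pulled back from $\calB$. For each simple reflection $s_i$, send $T_{s_i}$ to an appropriate combination of the structure sheaves of the diagonal and of the conormal variety $W_i \subset \St$ associated to the parabolic $\PP^1$-fibration $p_i:\calB\to\calP_i$; the precise combination is fixed by requiring the quadratic Hecke relation $(T_{s_i} - v)(T_{s_i} + v^{-1}) = 0$ to hold.

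Next, I would verify the defining relations of $\bfH_v$. Commutativity of the $X_\lam$'s is automatic since they are supported on the diagonal, which is a commutative subalgebra under convolution. The braid relations among $T_{s_i}$ and the quadratic relations reduce, via the geometry of the partial flag varieties $\calP_i$ and $\calP_{ij}$, to computations inside rank-one and rank-two Levi subalgebras. The crucial Bernstein--Lusztig relation
$$
T_{s_i} X_\lam - X_{s_i(\lam)} T_{s_i} = (v - v^{-1}) \, \frac{X_\lam - X_{s_i(\lam)}}{1 - X_{-\alp_i^{\vee}}}
$$
is the main technical step: it reduces to an explicit convolution calculation on $T^*\PP^1 \times T^*\PP^1$ which I expect to be the main obstacle, requiring careful tracking of the $\CC^*$-scaling weights on cotangent fibers. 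Once this is known in rank one, the general case follows by pulling back through the parabolic projection associated to $s_i$.

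Finally, for the isomorphism in (1) and the module identification in (2), I would use the cellular decomposition of $\St$ indexed by $W$ (or equivalently, a Bruhat-type stratification) together with the deformation-retract isomorphism $K^{\bfG}(T^*\calB) \simeq K^{\bfG}(\calB) \simeq K^B(\mathrm{pt}) = R(T)$. This identifies $K^{\bfG}(\St)$ with a free module over $R(\bfG)^{\bfG} = R(T)^W \otimes \CC[v^{\pm 1}]$ of the rank predicted by the PBW basis $\{X_\lam T_w\}$ of $\bfH_v$, proving the map is an isomorphism. For (2), the action of the diagonal subalgebra $\CC[\Lam] \subset \bfH_v$ on $R(T)$ is the natural multiplication action, while the action of the finite Hecke subalgebra $\bfH_v'$ factors through the character $T_{s_i} \mapsto v$ (by the geometry of $p_i^*(p_i)_*$ applied to the structure sheaf), which is precisely the condition defining $\bfM_v = \bfH_v \otimes_{\bfH_v'} \mathds{1}$.
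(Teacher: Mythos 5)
This proposition is cited by the paper without proof --- it is taken directly from Kazhdan--Lusztig and Chriss--Ginzburg --- so there is no in-paper argument to compare against. Your sketch does reproduce the standard strategy from those references: convolution algebra structure, explicit geometric generators, Bernstein--Lusztig relation reduced to $T^*\PP^1$, and a cellular-fibration/rank argument to conclude. A few points deserve sharpening if this were to become a real proof. First, the identification $K^{\bfG}(T^*\calB)\simeq K^{\bfG}(\calB)\simeq K^{B}(\mathrm{pt})$ drops the $\CC^*$-factor; you should write $K^{B\times\CC^*}(\mathrm{pt})=R(T)\otimes\CC[v^{\pm 1}]$. Second, ``an appropriate combination'' for $T_{s_i}$ and ``factors through $T_{s_i}\mapsto v$'' are convention-dependent and are precisely where sign and $v$-versus-$v^{-1}$ errors creep in; in Chriss--Ginzburg the image of $T_{s_i}$ is not $[\cO_{W_i}]$ alone but a specific $v$-shifted combination of $[\cO_{W_i}]$ and the diagonal, and checking the quadratic relation really does require computing the self-convolution of $[\cO_{W_i}]$, which produces the $(v-v^{-1})$ factor. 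Third, your rank argument needs the cellular fibration lemma in equivariant $K$-theory (not just a Bruhat stratification by hand): one filters $\St$ by unions of conormal varieties $\overline{T^*_{Y_w}(\calB\times\calB)}$, checks each piece is an affine bundle over a smooth projective base, and concludes $K^{\bfG}(\St)$ is free over $R(T)\otimes\CC[v^{\pm1}]$ of rank $|W|$, with basis matching $\{X_\lambda T_w\}$ under the constructed map. Finally, for part (2), the key verification that $[\cO_{T^*\calB}]$ generates the module and that $\bfH'_v$ acts on it by the sign of the trivial deformation is correct in spirit, but note the generator is the structure sheaf, and you need the $K$-theoretic projection formula $p_i^*(p_i)_*\cO_\calB = \cO_\calB$ for the $\PP^1$-fibration to see the character is the right one. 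None of these are wrong ideas, just steps where the sketch elides the actual content; as a roadmap to the Kazhdan--Lusztig/Ginzburg argument it is accurate.
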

Since $\bfH_v$ is a quotient of the group algebra of the corresponding affine braid group $\hatB_W$, Proposition
\ref{kazhdan-lusztig} provides an action of $\hatB_W$ on $K^{\bfG}(\St)$. This action is categorified
in \cite{Bezr-Riche}. Namely, in {\em loc. cit.} the authors construct an action of $\hatB_W$ on the derived
category $D^b(\Coh^{\bfG}(\St))$, which descends to the above action of $\hatB_W$ on the $K$-theory.
Thus, via Proposition \ref{cherednik-monodromy},
Proposition \ref{geometry-algebra} and Theorem \ref{main-detailed}(2), we can verify the relation between monodromy of the quantum connection and derived auto-equivalences in our situation, as discussed in section \ref{smdr}.

%---------------------------------------------------------------------------------------------------------
\section{Preliminaries on Gromov-Witten invariants}\label{GWprelim}
%------------------------------------------------------------------------------------------------------------

In this section, we review some definitions from Gromov-Witten theory and sketch the basic properties of the reduced virtual class.  The latter is a technical construction responsible for many of the nice qualitative properties discussed in the introduction.  Proofs and more detailed discussion of the results here can be found in \cite{okpanhilb, mpk3}.

\subsection{Definitions}

We first clarify the definitions of the equivariant Gromov-Witten invariants
$$
\langle \gamma_1, \dots, \gamma_n \rangle_{0,n,\beta}^{X}
= \int_{[\Mbar_{0,n}(X,\beta)]^{\mathrm{vir}}}
\prod_{k=1}^{n}\ev_{k}^{*}\gamma_k.
$$

In the above expression, the integrand is the virtual class on the moduli space of $n$-pointed stable maps to $X$, which has expected dimension
$$-K_X\cdot\beta + \dim X + n - 3 = 2 \dim \BB + n - 3,$$
and the map $\ev_{k}$ is the evaluation map associated to the $k$-th marked point on the domain curve.

The moduli space of maps is typically noncompact; however there is still a well-defined pushforward morphism
$H^*_{\bG,c}(\Mbar_{0,n}(X,\beta))\to H^*_{\bG}(pt)$ (here $H^*_{\bG,c}$ stands
for the corresponding equivariant cohomology with compact support).
On the other hand, we have the map $H^*_{\bG,c}(\Mbar_{0,n}(X,\beta))\to H^*_{\bG}(\Mbar_{0,n}(X,\beta))$.
This is a map of $H^*_{\bG}$-modules, which
becomes an isomorphism once we tensor everything with the field of fractions of $H^*_{\bG}(pt)$, provided thatthe
$\bT$-fixed locus is compact. Thus we get a well-defined integration on $H^*_{\bG}(\Mbar_{0,n}(X,\beta)$ which
takes values in the field of fractions of
$H_{\bG}^{*}(pt)$.
Explicitly, this integral can be defined using virtual localization.

We further observe that for $\beta \ne 0$,
$$\langle \gamma_{1}, D_{\lambda}, \gamma_{2}\rangle^{X}_{0,3,\beta} =
(D_{\lambda}\cdot \beta) \langle \gamma_{1}, \gamma_{2}\rangle^{X}_{0,2,\beta},$$
by the divisor equation.  As a result, in order to understand the non-classical contribution to divisor operators, it suffices to study the two-point invariants of $X$.

\subsection{Reduced class}\label{reducedclass}

Given a variety $V$ with an everywhere-nondegenerate holomorphic symplectic form $\omega$, it is well-known that the usual non-equivariant virtual fundamental class on $\Mbar_{g,n}(V, \beta)$ vanishes for $\beta \ne 0$.  However, one can correct this phenomenon by modifying the standard obstruction theory so that the virtual dimension is increased by $1$.

We explain this modification for the deformation theory of maps from a fixed nodal curve $C$; applying this construction relative to the moduli stack of nodal curves gives the construction in general.  Let $M_C(V, \beta)$ denote the moduli space of maps from $C$ to $V$ with target homology class $0 \ne \beta \in H_2(V,\ZZ)$.  The standard obstruction theory
 for $M_C(V,\beta)$ is defined by the natural morphism
 \begin{equation}\label{standardmap}
R\pi_{\ast}(\mathrm{ev}^{\ast}T_{V})^{\vee} \rightarrow
L_{M_{C}},
\end{equation}
where $L_{M_{C}}$ denotes the cotangent complex of
$M_{C}(V,\beta)$ and
\begin{gather*}
\mathrm{ev}: C \times M_C(V,\beta) \rightarrow V,\\
\pi: C \times M_C(V, \beta) \rightarrow M_C(V,\beta).
\end{gather*}
are the evaluation and projection maps.

Let $\omega_{\pi}$ denote the relative dualizing
sheaf.    There is a map
$$\mathrm{ev}^{\ast}(T_V)\rightarrow
\omega_{\pi}\otimes(\mathbb{C}\omega)^{\ast},$$
induced by the pairing with the symplectic form and pullback of differentials.
This, in turn,
yields a map of complexes
$$R\pi_{\ast}(\omega_{\pi})^{\vee}\otimes\mathbb{C}\omega \rightarrow
R\pi_{\ast}(\mathrm{ev}^{\ast}(T_V)^{\vee})$$ and the
truncation
$$\iota:\tau_{\leq -1}R\pi_{\ast}(\omega_{\pi})^{\vee}\otimes\mathbb{C}\omega \rightarrow
R\pi_{\ast}(\mathrm{ev}^{\ast}(T_V)^{\vee}).$$  The truncation is a trivial line bundle, although will carry a nontrivial weight in the equivariant setting if the group action acts by a nontrivial character on $\omega$.

There is an induced map
\begin{equation}\label{reducedmap}
C(\iota) \rightarrow L_{M_{C}}
\end{equation}
where $C(\iota)$ is the mapping cone associated to $\iota$.
Moreover, this map
(\ref{reducedmap}) satisfies the necessary
properties of a perfect obstruction theory.  This is known
as the \textit{reduced} obstruction theory and defines
the \textit{reduced} virtual fundamental class associated to $V$.
\subsection{One-parameter families}\label{codimensionone}

Another characterization of reduced virtual classes is given in \cite{mpk3} in terms of one-parameter deformations of the holomorphic symplectic variety $V$.

Suppose we are in the following situation: There is a smooth map
$\pi: \calV \rightarrow B$ where $B$ is a smooth curve, such that $\pi$ is topologically trivial as a fibration and $\calV$ is equipped with a fiber-wise holomorphic symplectic form.  There is a point $b \in B$ such that $V = \calV_{b}$.  Furthermore, assume that we have a class $\beta \in H_{2}(V, \ZZ) \subset H_{2}(\calV, \ZZ)$ satisfying the following conditions:

\begin{itemize}
\item The fiber $V$ is the unique fiber of $\pi$ for which the class $\beta$ is represented by an effective curve.
\item The composition
\begin{equation}\label{embedding}
T_{B,b} \rightarrow H^1(V, T_V) \xrightarrow{\sim} H^{1}(V, \Omega_V) \xrightarrow{\beta\cup} \CC
\end{equation}
is an isomorphism.
\end{itemize}
In the above diagram, the first map is the Kodaira-Spencer map, the second map is the isomorphism induced by the holomorphic symplectic form on $V$, and the last map is given by cup product with the curve class $\beta$.

The following result is proven in Theorem $1$ of \cite{mpk3}
\begin{prop}\label{reduced-versus-virtual}
Given the above hypotheses, the embedding
$$\Mbar_{0,n}(V, \beta) \rightarrow \Mbar_{0,n}(\calV, \beta)$$
is an isomorphism of stacks and we have an
equality of virtual classes
$$[\Mbar_{0,n}(V,\beta)]^{\mathrm{red}} = [\Mbar_{0,n}(\calV, \beta)]^{\mathrm{vir}},$$
where the right-hand side is just the usual virtual fundamental class associated to $\calV$.
\end{prop}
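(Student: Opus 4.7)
The plan is to prove the two statements in turn: first establish the isomorphism of moduli spaces, then compare the obstruction theories via the normal bundle short exact sequence of $V\subset\calV$, using the Kodaira--Spencer/symplectic form compatibility to match the boundary map with the reduction morphism $\iota$.

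For the moduli isomorphism, I would argue that any stable map $f\colon (C,\vec p)\to\calV$ with $f_*[C]=\beta$ factors through $V$. Indeed, $\pi\circ f\colon C\to B$ is a morphism from a proper connected genus-zero nodal curve to a smooth curve, and its degree equals $\pi_*\beta=0$ since $\beta\in H_2(V,\ZZ)$ and $V$ is contracted by $\pi$. Hence $\pi\circ f$ is constant, and by the uniqueness hypothesis on $\beta$, this constant value must be $b$, so $f$ lands in $V$. The resulting closed embedding $\Mbar_{0,n}(V,\beta)\hookrightarrow\Mbar_{0,n}(\calV,\beta)$ is bijective on objects and tangent spaces, hence an isomorphism of stacks. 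Denote the common moduli space by $M$.

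For the virtual class comparison, I would exploit the normal bundle sequence
\[
0\to T_V\to T_\calV|_V\to N_{V/\calV}\to 0,
\]
where $N_{V/\calV}\cong\mathcal{O}_V\otimes T_{B,b}$ is trivial because $V$ is a fiber of a smooth map to the curve $B$. Applying $R\pi_*\ev^*$ to the universal map over $M$ and using that $R\pi_*\mathcal{O}_{C_M}=\mathcal{O}_M$ for families of genus-zero nodal curves, one obtains after dualization a distinguished triangle
\[
\mathcal{O}_M\otimes T_{B,b}^\vee\to (R\pi_*\ev^*T_\calV)^\vee\to (R\pi_*\ev^*T_V)^\vee\xrightarrow{\delta} \mathcal{O}_M[1]\otimes T_{B,b}^\vee,
\]
and in parallel the reduced obstruction theory of $V$ is defined as $C(\iota)$ for the map $\iota\colon\mathcal{O}_M[1]\otimes\CC\omega\to (R\pi_*\ev^*T_V)^\vee$. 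The key technical step is to show that the boundary morphism $\delta$ is naturally identified (up to sign) with the transpose of $\iota$ under the isomorphism $T_{B,b}^\vee\xrightarrow{\sim}\CC\omega^\vee$ provided by hypothesis~\eqref{embedding}. Fiberwise at $[f]\in M$, $\delta$ is the contraction against the Kodaira--Spencer class $\kappa_b\in H^1(V,T_V)\otimes T_{B,b}^\vee$ pulled back to $C$, while $\iota$ encodes the symplectic pairing $\ev^*T_V\to \omega_\pi\otimes(\CC\omega)^\vee$; the equality of the resulting pairings on $H^1(C,f^*T_V)^\vee$ amounts precisely to the stated composition being an isomorphism.

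Given this identification, the two triangles fit into a morphism of triangles whose third map is the hypothesized isomorphism, which forces $(R\pi_*\ev^*T_\calV)^\vee\simeq C(\iota)$ in $D^b(M)$. Both perfect complexes carry canonical maps to $L_M$ — the first is the Behrend--Fantechi obstruction theory for $\Mbar_{0,n}(\calV,\beta)$, the second is the reduced obstruction theory for $\Mbar_{0,n}(V,\beta)$ — and a diagram chase shows the identification intertwines these maps, because both are built functorially from the universal evaluation. Hence the induced virtual classes agree. The main obstacle I expect is the clean derived-categorical verification that $\delta$ and $\iota^\vee$ coincide under the hypothesis isomorphism; this is essentially a restatement of the semiregularity-type compatibility between Kodaira--Spencer, the symplectic pairing, and integration against $\beta$, and is the content of Theorem~1 of \cite{mpk3}.
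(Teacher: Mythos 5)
The paper offers no internal proof of this proposition at all --- it is imported wholesale, the text saying it is proven in Theorem 1 of \cite{mpk3} --- and your sketch follows the strategy of that reference (factor stable maps through the fiber, then compare the obstruction theory of $\calV$ with the reduced theory of $V$ via the normal-bundle triangle and hypothesis \eqref{embedding}). As a standalone proof, however, your write-up is circular at exactly the point you yourself flag as the key technical step: the identification of the connecting map $\delta$ with the transpose of $\iota$ is deferred to ``Theorem~1 of \cite{mpk3}'', which \emph{is} the statement being proved. To make this a genuine argument you must establish that compatibility directly: the composite of the connecting map $T_{B,b}\to H^1(C,f^*T_V)$ for $0\to T_V\to T_\calV|_V\to\calO_V\otimes T_{B,b}\to 0$ (cup product with the pulled-back Kodaira--Spencer class) with the cosection $H^1(C,f^*T_V)\to H^1(C,\omega_C)\cong\CC$ defined by $\omega$ equals the pairing in \eqref{embedding}; this follows from functoriality of Kodaira--Spencer under $f$ together with $\int_C f^*(-)=\int_\beta(-)$, but it has to be proved, not quoted. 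One must then also check that the resulting identification of $(R\pi_*\ev^*T_\calV)^\vee$ with $C(\iota)$ intertwines the two maps to the cotangent complex (relative to the stack of prestable curves), since equality of virtual classes requires comparing obstruction theories, not merely the underlying perfect complexes.

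The stack-isomorphism step is also incomplete as written: ``bijective on objects and tangent spaces, hence an isomorphism of stacks'' is not a valid inference --- $\Spec\CC[x]/(x^2)\subset\Spec\CC[x]/(x^3)$ is a closed embedding bijective on points and on tangent spaces. What is needed is that every family of stable maps to $\calV$ in class $\beta$ over an \emph{arbitrary Artinian base} factors through the fiber $V$. Your set-theoretic argument (properness and $\pi_*\beta=0$ force $\pi\circ f$ constant, and uniqueness of the distinguished fiber pins the constant to $b$) is fine, but the scheme-theoretic statement to all orders again rests on \eqref{embedding}: such a family induces a map $\Spec A\to B$ through which $\pi$ factors, and one must show it is the constant map $b$, using that the derivative of $s\mapsto\int_\beta\omega_s$ is nonzero (equivalently, that the connecting map composed with the cosection is an isomorphism at every point of the moduli space). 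So the outline is the right one --- it is essentially the outline of \cite{mpk3} --- but the two items above are the actual content of the proposition, and in your proposal they are either missing or borrowed from the theorem itself.
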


This statement can be easily generalized in many directions.
If we have a fiber-wise group action of $T$ on $\calV$, then the above equality holds for $T$-equivariant virtual classes.  If there are finitely many points on $B$ for which $\beta$ is effective and infinitesimally rigid in the sense of equation (\ref{embedding}), the statement generalizes in the obvious way where the left-hand side involves a summation over these points.

\section{Proof of Theorem \ref{formula}}\label{proofoftheorem}

We now use the results of the last section to give a short proof of Theorem \ref{formula}.

\subsection{Lagrangian correspondences}\label{Lcorr}

We first apply the results of section \ref{reducedclass} to our situation.
Since the holomorphic symplectic form on $X$ has equivariant weight $-t$ with respect to $\bG$, the construction there extends to the equivariant setting.  In particular, the reduced obstruction theory produces a cycle class
$$ [\Mbar_{0,2}(X,\beta)]^{\mathrm{red}} \in H^{BM,\bG}_{2\dim X}(\Mbar_{0,2}(X,\beta)$$
for $\beta \ne 0$ such that
$$[\Mbar_{0,2}(X, \beta)]^{\mathrm{vir}} = t\cdot [\Mbar_{0,2}(X,\beta)]^{\mathrm{red}}.$$
This equality explains the factor of $t$ in front of the nonclassical contribution to Theorem \ref{formula}.

Recall the Springer map
$$f: X \rightarrow \calN.$$
Since $\calN$ is affine, any proper curve on $X$ is contained in a fiber of $f$, and the evaluation map to $X \times X$ factors through the Steinberg variety
$$\phi: \Mbar_{0,2}(X,\beta) \rightarrow \St =  X \times_{\calN} X.$$
The resolution $X \rightarrow \calN$ is semismall, so the irreducible components $Z_{k}$ of $X \times_{\calN} X$ all have dimension $\dim X$.
In particular, we immediately have the following elementary but crucial lemma.
\begin{lemma}\label{correspondences}
\begin{equation}\label{corresp}
\phi_{*}[\Mbar_{0,2}(X,\beta)]^{\mathrm{red}} =
\sum a_k [Z_k] \in H^{BM,\GG}_{2\dim X}(X\times_{\calN}X),
\end{equation}
where $a_k \in \QQ$ are nonequivariant constants.
\end{lemma}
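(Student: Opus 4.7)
The plan is a direct dimension count in equivariant Borel-Moore homology, resting on two inputs: the reduced virtual class constructed in Section \ref{reducedclass} and semismallness of the Springer map $f\colon X\to\calN$. The prefactor of $t$ visible in Theorem \ref{formula} has already been extracted at the start of Section \ref{Lcorr} by passing from the usual virtual class to the reduced one, so it suffices to analyze $\phi_*[\Mbar_{0,2}(X,\beta)]^{\mathrm{red}}$.

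First I would pin down the homological degree of the pushforward. Since $X$ is symplectic, $K_X$ is trivial, so the ordinary virtual dimension $-K_X\cdot\beta+\dim X+n-3$ equals $\dim X-1$ for $n=2$; the reduced obstruction theory raises this by one, placing the reduced class in Borel-Moore degree $2\dim X$. Affineness of $\calN$ forces every proper curve in $X$ to be contracted by $f$, so the double evaluation factors through $\phi\colon\Mbar_{0,2}(X,\beta)\to\St=X\times_\calN X$, and proper pushforward preserves Borel-Moore degree. Semismallness of the Springer map, recalled in Section \ref{hecke-geometry}, is equivalent to $\St$ being equidimensional of dimension exactly $\dim X$, with top-dimensional components $Z_k$ indexed by $W$. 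Because $\bG$ is connected it preserves each component, so $[Z_k]\in H^{BM,\bG}_{2\dim X}(\St)$ is well-defined.

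The remaining input is formal: for a connected group acting on an equidimensional variety $Y$ of dimension $d$, the top equivariant Borel-Moore homology $H^{BM,\bG}_{2d}(Y)$ is freely generated over $\QQ$ by the fundamental classes of the irreducible components. Indeed, generators of $H^*_\bG(\mathrm{pt})$ sit in positive cohomological degree and act by lowering Borel-Moore degree, so no equivariant parameter can lift a lower-dimensional class into the top degree. Applied to $Y=\St$ this yields the expansion \eqref{corresp} with $a_k\in\QQ$. The only step that requires any care is this last one; it can be verified either through the Borel construction or through the localization sequence for the stratification of $\St$ by smooth loci of its components. The geometric content of the lemma is the numerical coincidence forced by semismallness together with the Calabi--Yau nature of $X$: both sides of \eqref{corresp} live in the same homological degree.
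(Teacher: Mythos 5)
Your proposal is correct and takes essentially the same approach as the paper, which disposes of the lemma with the one-line remark that it ``follows immediately from dimension constraints, since we are working with non-localized coefficients.'' You have simply unpacked that sentence: the reduced class lives in $H^{BM,\bG}_{2\dim X}$, $\phi$ is proper so pushforward preserves that degree, semismallness makes $\St$ equidimensional of dimension $\dim X$, and since $H^*_\bG(\mathrm{pt})$ acts by lowering Borel-Moore degree the top group $H^{BM,\bG}_{2\dim X}(\St)$ is spanned over $\QQ$ by the fundamental classes $[Z_k]$ with no room for equivariant parameters in the coefficients.
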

\begin{proof}
This follows immediately from dimension constraints, since we are working with non-localized coefficients.
\end{proof}

Since the correspondence operators defined by $[Z_k]$ give the action of $\ZZ[W]$ on the cohomology of $X$, this explains why the non-classical contribution to $D_{\lambda}$ is expressed in terms of Weyl group operators.   To complete the proof of Theorem \ref{formula}, it remains to match the coefficients $a_{k}$ with the predicted answer.  Since these coefficients are rational numbers, they are unaffected by specializing to $t=0$.  Therefore, we only need to compute the left hand side of
equation \ref{corresp} in $G$-equivariant cohomology.

\subsection{Simultaneous resolution}\label{simultaneous}

We now apply section \ref{codimensionone} to rephrase reduced invariants in terms of one-parameter deformations of $X$; here it is important that we are only working $G$-equivariantly.

We obtain such deformations using the Grothendieck resolution of section \ref{sGroth}:
$$\phi:\widetilde{X} \rightarrow \grt^*.$$  Recall that the fiber over the origin is $X$ and that there is a fiber-preserving $G$-action which cannot be extended to a fiber-preserving $\bG$-action.  In general, for $z \in \grt^*$, the monoid of effective curve classes in the fiber
$\phi^{-1}(z)$ is given by
$$\mathrm{Eff}(\phi^{-1}(z)) = \mathrm{Span}\{\alpha^{\vee} | \alpha \in \Delta_{+},
(\alpha^{\vee},z)=0\}.$$

Consider a generic linear subspace $\phi_{0}: \mathbb{A}^{1} \rightarrow \grt$, chosen to intersect each hyperplane transversely exactly once at the origin; via pullback, we have a smooth map $\calV_0 \rightarrow \mathbb{A}^{1}$ whose fiber over the origin is $X$.

\begin{lemma}
The family $\mathcal{V}_{0}\rightarrow \mathbb{A}^{1}$ satisfies the conditions listed in section \ref{codimensionone}.
\end{lemma}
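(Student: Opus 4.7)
The plan is to verify the two conditions of Section \ref{codimensionone} for the family $\pi:\mathcal{V}_0\to\mathbb{A}^1$ at the basepoint $0\in\mathbb{A}^1$, taking $\beta=\alpha^\vee$ to range over positive coroots (using the generalization of Proposition \ref{reduced-versus-virtual} to finitely many effective fibers, if needed for non-primitive positive combinations).

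First, I would check the structural hypotheses on $\pi$. The Grothendieck simultaneous resolution $\widetilde{X}\to\grt^*$ is smooth, and its fibers carry a natural fiber-wise holomorphic symplectic form extending the Kirillov--Kostant form on the generic (coadjoint orbit) fiber and restricting to the cotangent form on the fiber over $0$, as recalled in Section \ref{sGroth}. Base change along $\phi_0:\mathbb{A}^1\hookrightarrow\grt^*$ inherits both properties. Topological triviality over the contractible base $\mathbb{A}^1$ can be deduced from the $G$-equivariance of the family together with smoothness of $\widetilde{X}\to\grt^*$; in effect, one constructs a topological section and uses the $G$-action along the fibers to propagate it. This gives the required identification $H_2(X,\ZZ)\subset H_2(\mathcal{V}_0,\ZZ)$.

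Next, the uniqueness-of-effective-fiber condition. As described in Section \ref{geom_sympl} (cf.\ \eqref{hyperpl}), the effective cone in the fiber over $z\in\grt^*$ of the Grothendieck family is generated by those positive coroots $\alpha'^\vee$ for which $(\alpha',z)=0$. Since $\phi_0$ is chosen transverse to each root hyperplane at the origin and otherwise avoids them, for every $s\neq 0$ and every root $\alpha'$ we have $(\alpha',\phi_0(s))\neq 0$. Hence $\beta=\alpha^\vee$ is effective on exactly one fiber, namely $\mathcal{V}_{0,0}=X$.

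Finally, the Kodaira--Spencer computation, which is the substantive step. As recalled in Section \ref{geom_sympl} and \eqref{GrothPhi2}, the Grothendieck family is the universal family of symplectic deformations of $(X,\omega)$. For a family of holomorphic symplectic varieties, the composition of the Kodaira--Spencer map with the isomorphism $H^1(X,T_X)\xrightarrow{\sim}H^1(X,\Omega_X)$ induced by contraction with $\omega$ equals the derivative of the period map $z\mapsto [\omega_z]$. For the universal deformation this derivative is the identity, so the Kodaira--Spencer map of $\widetilde{X}\to\grt^*$ is identified with the canonical isomorphism $\grt^*\cong H^2(X,\CC)$ of \eqref{Lam}. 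Pulling back along $\phi_0$, the composition \eqref{embedding} for $\mathcal{V}_0$ becomes
\[
T_{\mathbb{A}^1,0}\xrightarrow{d\phi_0}\grt^*\xrightarrow{\sim}H^2(X,\CC)\xrightarrow{\,\cap\,\alpha^\vee}\CC,
\]
which sends $v\mapsto (v,\alpha^\vee)$. Transversality of $\phi_0$ to the hyperplane $\{\alpha=0\}$ is exactly the non-vanishing of this pairing, which yields the required isomorphism.

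The main technical obstacle is identifying the Kodaira--Spencer map of the Grothendieck family with the canonical isomorphism $\grt^*\cong H^2(X,\CC)$; this is where universality of the deformation and the symplectic period map enter essentially. Once this identification is in hand, each remaining item reduces to the transversality of the chosen line $\phi_0$.
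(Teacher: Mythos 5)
Your proof is correct and follows essentially the same line as the paper's (very terse) argument: the set-theoretic uniqueness of the effective fiber follows from the choice of $\phi_0$, and the infinitesimal condition \eqref{embedding} reduces, via the identification of the Kodaira--Spencer map of the Grothendieck family with the tautological isomorphism $\grt^*\cong H^2(X,\CC)$, to the transversality of $\phi_0$ to the root hyperplane. You spell out the period-map justification and the topological triviality over $\mathbb{A}^1$, both of which the paper leaves implicit, but the substance is the same.
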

\begin{proof}
On the set-theoretic level, this is obvious.  The infinitesimal statement follows from the transverse assumption, since the cup product pairing in equation \ref{embedding} can be identified with the pairing of the root lattice with $\grt$.
\end{proof}

Proposition \ref{reduced-versus-virtual} implies that, for $\gamma_{1}, \gamma_{2} \in H_{T}^{*}(X, \CC)$, we have

$$\langle \gamma_{1}, \gamma_{2} \rangle^{X, \mathrm{red}}_{0,2,\beta}
= \langle \gamma_{1}, \gamma_{2}\rangle^{\calV_0}_{0,2,\beta},$$
where the right-hand side again denotes invariants defined via the standard virtual class.

We now deform the variety $\calV_{0}$ and apply deformation invariance, following the approach in \cite{bryan-katz-leung, gwan}.
Let $\calD = \mathbb{A}^{1}$ and consider a family of maps $\phi_{t}:\mathbb{A}^{1} \rightarrow \grt$, $t \in \calD$, so that for $t=0$ we recover $\phi_{0}$ and for any $t \ne 0$ sufficiently near $0$, the image of $\phi_{t}$ intersects each hyperplane $H_{\alpha}$ transversely at distinct points.  We fix $t \ne 0$ with this property, and let $z_{\alpha}$ be the distinct intersection points of $\phi_t$ with $H_{\alpha}$.  The associated smooth family $\mathcal{V}_{t}$ will again satisfy the conditions of section \ref{codimensionone} at each of these intersection points.

\begin{lemma}\label{hyperplanes}
For $t \ne 0$, we have the equality of $G$-equivariant reduced Gromov-Witten invariants
$$\langle \gamma_{1}, \gamma_{2} \rangle^{X, \mathrm{red}}_{0,2,\beta}
= \sum_{\alpha \in \Delta_{+}} \langle \gamma_{1}, \gamma_{2} \rangle^{X_{z_{\alpha}}, \mathrm{red}}_{0,2,\beta}$$
\end{lemma}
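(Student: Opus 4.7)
The plan is to apply Proposition \ref{reduced-versus-virtual} twice, with deformation invariance of the \emph{standard} (non-reduced) equivariant virtual class sandwiched in the middle.

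First, by the preceding lemma, the family $\calV_0 \to \AA^1$ satisfies the hypotheses of Proposition \ref{reduced-versus-virtual} at the origin, so that
$$\langle \gamma_1, \gamma_2 \rangle^{X, \mathrm{red}}_{0,2,\beta} = \langle \gamma_1, \gamma_2 \rangle^{\calV_0}_{0,2,\beta},$$
where the right-hand side is the ordinary $G$-equivariant Gromov--Witten invariant of $\calV_0$ defined via the standard virtual class.

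Second, I would regard $\{\calV_t\}_{t \in \calD}$ as a smooth one-parameter family of smooth varieties, all $G$-equivariantly diffeomorphic to $\calV_0$, so that the classes $\gamma_1,\gamma_2$ extend naturally through the family. Deformation invariance of the standard equivariant virtual class then yields
$$\langle \gamma_1, \gamma_2 \rangle^{\calV_0}_{0,2,\beta} = \langle \gamma_1, \gamma_2 \rangle^{\calV_t}_{0,2,\beta}$$
for any $t \ne 0$ sufficiently close to $0$.

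Third, for such $t$ the transverse hypothesis on $\phi_t$ constrains any effective curve of class $\beta$ in $\calV_t$ to lie in a single fiber $X_{z_\alpha}$ with $\beta \in \ZZ_{>0}\alpha^\vee$. Hence $\Mbar_{0,2}(\calV_t, \beta)$ decomposes as a disjoint union of $\Mbar_{0,2}(X_{z_\alpha}, \beta)$ over the finitely many positive roots $\alpha$ for which $\beta$ is a positive multiple of $\alpha^\vee$. The generalized form of Proposition \ref{reduced-versus-virtual} recorded at the end of Section \ref{codimensionone} --- applied locally near each $z_\alpha$, where transversality of $\phi_t$ to $H_\alpha$ again supplies the Kodaira--Spencer isomorphism --- then gives
$$\langle \gamma_1, \gamma_2 \rangle^{\calV_t}_{0,2,\beta} = \sum_{\alpha \in \Delta_+} \langle \gamma_1, \gamma_2 \rangle^{X_{z_\alpha}, \mathrm{red}}_{0,2,\beta},$$
where terms with $\beta$ not a positive multiple of $\alpha^\vee$ contribute zero because $\beta$ is then not effective in $X_{z_\alpha}$. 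Concatenating the three equalities proves the lemma.

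The main obstacle I anticipate is rigorously justifying $G$-equivariant deformation invariance in the middle step, since the moduli spaces $\Mbar_{0,2}(\calV_t, \beta)$ are not themselves proper. This amounts to checking that the $T$-fixed locus of the universal moduli space over $\calD$ is proper, so that equivariant integration is both well defined and constant in $t$. This reduces to properness of $G$-fixed loci inherited from the Springer resolution, combined with the observation that the transverse hypothesis on $\phi_t$ keeps effective representatives of class $\beta$ confined to an isolated, infinitesimally rigid set of fibers, preventing any curve class from escaping or emerging as $t$ varies.
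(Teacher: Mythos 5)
Your argument is essentially identical to the paper's: both identify the left side with $\langle\gamma_1,\gamma_2\rangle^{\calV_0}_{0,2,\beta}$ and the right side with $\langle\gamma_1,\gamma_2\rangle^{\calV_t}_{0,2,\beta}$ via Proposition \ref{reduced-versus-virtual} (and its multi-point generalization), then apply deformation invariance of the standard virtual class to the family $\calV\to\calD$, handling non-properness by observing that the $T$-fixed locus of $\Mbar_{0,2}(\calV/\calD,\beta)$ is proper over $\calD$. The paper compresses this into one paragraph but the structure and the key technical point (properness of the equivariant fixed locus, with reference to Behrend--Fantechi) are exactly as you describe.
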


\begin{proof}
The left and right sides of the statement can be identified with $\langle \gamma_{1}, \gamma_{2} \rangle ^{\calV_{s}}_{0,2,\beta}$, for $s=0$ and $s=t$ respectively.  These are equal by deformation invariance of the standard virtual class construction applied to the family $\calV \rightarrow \calD$.
Although the associated moduli space of maps relative to $\calD$
$$\Mbar_{0,2}(\calV/\calD, \beta) \rightarrow \calD$$
is not proper over $\calD$, it admits a fiberwise $T$-action whose fixed loci are proper over $\calD$, and the results from \cite{behrend-fantechi} can be applied.
\end{proof}

The arguments in these two sections apply without change to the general setting of the introduction, as discussed in section \ref{rootsubalgebra}.  In particular, we see that the nonclassical terms of divisor operators lie in the subalgebras associated to primitive coroots.

\subsection{Actual calculation}\label{actual}

Lemma \ref{hyperplanes} reduces the argument to understanding the codimension one fibers of the simultaneous resolution.  These have the following description.  Consider a general point $z \in H_{\alpha}$ for
$\alpha \in \Delta_{+}$ and let $M_{z}$ denote its centralizer in $G$; it is easy to see that $M_{z}$ fits into an exact sequence with its center $Z(M_{z})$:

$$1 \rightarrow Z(M_{z}) \rightarrow M_{z} \rightarrow \mathrm{PGL}(2) \rightarrow 1.$$

This sequence induces an action of $M_{z}$ on $T^{*}\PP^1$; the fiber $X_{z}$ is

$$ X_{z} = G \times_{M_{z}} T^{*}\PP^{1},$$

and this identification is compatible with the $G$-action.  This description shows that $X_{z}$ is a fiber bundle over the affine variety  $A_{z} = G/M_{z}$ with fiber $T^{*}\PP^1$; in particular, we see the effective curves are multiples of the zero-section of the fiber, in class $\alpha^{\vee}$.   Let $Y_{z}$ denote the affinization of $X_{z}$, which is just the contraction of this class.

As in the proof of Lemma \ref{correspondences}, evaluation on the moduli space of maps factors through the Steinberg construction:

$$\phi: \Mbar_{0,2}(X_{z},\beta) \rightarrow X_{z} \times_{Y_{z}} X_{z} = X_{z} \cup W_{z}$$
where $W_{z}$ is a $\PP^{1}\times\PP^{1}$-bundle over $A_{z}$.

\begin{lemma}

$$\phi_{*}[\Mbar_{0,2}(X_{z}, d\alpha^{\vee})]^{\mathrm{red}} = \frac{1}{d}[W_{z}]$$

\end{lemma}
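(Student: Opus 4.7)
The plan is to first use the $G$-equivariant fiber bundle structure to reduce the statement to a single fiber $T^*\PP^1$, and then carry out a local multi-cover computation on that fiber.

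First, since $A_z = G/M_z$ is affine and $X_z \to A_z$ is a locally trivial $T^*\PP^1$-bundle, every proper curve in $X_z$ lies in a single fiber. Consequently
\begin{equation*}
\Mbar_{0,2}(X_z, d\alpha^{\vee}) \;\cong\; G \times_{M_z} \Mbar_{0,2}(T^*\PP^1, d),
\end{equation*}
and similarly $W_z = G \times_{M_z} (\PP^1 \times \PP^1)$, where $\PP^1 \times \PP^1$ is the non-diagonal component of the Steinberg variety $T^*\PP^1 \times_{\CC^2/\ZZ_2} T^*\PP^1$. The evaluation map $\phi$ and the reduced obstruction theory are $G$-equivariant and compatible with this fiber structure, so the claim reduces to the single-fiber identity
\begin{equation*}
\phi_* [\Mbar_{0,2}(T^*\PP^1, d)]^{\mathrm{red}} = \tfrac{1}{d}\, [\PP^1 \times \PP^1]
\end{equation*}
inside $T^*\PP^1 \times_{\CC^2/\ZZ_2} T^*\PP^1$.

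For this local statement, note that every stable map of class $d\alpha^{\vee}$ factors through the zero section $\PP^1 \subset T^*\PP^1$, so $\Mbar_{0,2}(T^*\PP^1, d) = \Mbar_{0,2}(\PP^1, d)$ as stacks, smooth of dimension $2d$. Starting from the obstruction bundle $R^1\pi_* f^*\mathcal{O}(-2)$ of rank $2d-1$ coming from the normal direction $N_{\PP^1/T^*\PP^1} \cong \mathcal{O}(-2)$, one removes the trivial summand dual to $\CC\omega$ to obtain the reduced obstruction bundle of rank $2d-2$. Capping its Euler class with the fundamental class yields a cycle of dimension $2$; pushing forward to the $2$-dimensional $\PP^1 \times \PP^1$ must therefore produce a scalar multiple of $[\PP^1 \times \PP^1]$.

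To pin down the scalar I would pair the reduced class with a point class at each marked point and compute $\langle\mathrm{pt},\mathrm{pt}\rangle^{T^*\PP^1,\mathrm{red}}_{0,2,d}$. The cleanest route is equivariant localization with respect to the $\CC^*$-action scaling the base $\PP^1$: the only contributing fixed component consists of totally ramified cyclic covers with the two marked points carrying the two ramification branches, contributing an automorphism factor of $1/d$ from the $\ZZ/d$ deck group, while the equivariant Euler classes of the reduced obstruction bundle and of the virtual normal bundle exactly cancel. The main obstacle is verifying this last cancellation of torus weights cleanly; alternatively one can invoke the well-known surface analogue of the Aspinwall--Morrison multi-cover formula for $\PP^1 \subset T^*\PP^1$, e.g.\ as treated in the Gromov--Witten theory of resolved ADE surfaces, where the $1/d$ factor is standard.
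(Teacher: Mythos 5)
Your proposal follows essentially the same strategy as the paper: reduce to the fiber $T^*\PP^1$ over a point of $A_z$, observe from the dimension count that the pushforward must be a multiple of $[W_z]$, and then invoke the known $1/d$ multicover result for $T^*\PP^1$. The one place the paper does visibly more work is in justifying the reduction step: it introduces the reduced obstruction theory \emph{relative} to $A_z$ and proves that this yields the same class as the absolute one, with the key observation that for a rational curve $C$ the comparison map $H^1(C, f^*T_{X_z/A_z}) \to H^1(C, f^*T_{X_z})$ is an isomorphism and that the trivial quotient used in the reduction is compatible because the fiberwise symplectic form is the restriction of the global one; your phrase ``the reduced obstruction theory is $G$-equivariant and compatible with this fiber structure'' packages exactly this, but it is asserted rather than argued. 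For pinning down the scalar, both you and the paper ultimately fall back on the established $T^*\PP^1$ (equivalently $\mathrm{SL}_2$ / ADE-resolution) computation, which is the honest route given that, as you note yourself, the direct localization sketch is not carried out.
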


\begin{proof}

Let $\Mbar_{0,2}(X_{z}/A_{z}, \beta)$ denote the moduli space of stable maps to $X_{z}$ relative to $A_{z}$, parametrizing a point $a \in A_{z}$ and a stable map to the fiber over  $a$.  Since the map $X_z \rightarrow A_z$ is smooth, this moduli space admits a perfect obstruction theory relative to the map to $A_{z}$.  Furthermore, since we have a fiberwise holomorphic symplectic form on $X_z$, we can construct the reduced obstruction theory relative to $A_z$ and the associated reduced virtual class
$[\Mbar_{0,2}(X_{z}/A_{z}, \beta)]^{\mathrm{red}}.$

We first claim that

$$[\Mbar_{0,2}(X_{z},\beta)]^{red} = [\Mbar_{0,2}(X_{z}/A_{z}, \beta)]^{red}.$$

This follows from a comparison of obstruction theories.  First, for standard virtual classes, there is a map of obstruction spaces associated to maps from a fixed curve $C$
$$H^{1}(C, f^{*}T_{X_z/A_z}) \rightarrow H^{1}(C, f^{*}T_{X_z}).$$ When $C$ is a rational curve, this is an isomorphism.  For reduced classes, we need this map to be compatible with the one-dimensional quotient arising from the holomorphic symplectic form (fiberwise, in the case of $X_{z}/A_{z}$).  This follows from the fact that the holomorphic symplectic form on fibers of
$X_z/A_z$ are the restrictions of the form on $X_z$.

As in Lemma \ref{correspondences}, dimension constraints imply that

$$\phi_{*}[\Mbar_{0,2}(X_{z}, d\alpha^{\vee})]^{\mathrm{red}} = c_{1}[X_{z}] + c_{2}[W_{z}],$$
for $c_{1}, c_{2} \in \QQ$.

We can calculate the coefficients $c_{1},c_{2}$ after restricting to a fiber of $X_z\rightarrow A_z$.  More precisely,

if we pick a point $\iota: a \rightarrow A_{z}$,

$$
\begin{aligned}
\phi_{*}[\Mbar_{0,2}(T^{*}\PP^{1}, d)]^{red} =
\phi_{*}\iota^{!}[\Mbar_{0,2}(X_{z}, d\alpha^{\vee})]^{\mathrm{red}}= \\
\iota^{!}\phi_{*}[\Mbar_{0,2}(X_{z}, d\alpha^{\vee})]^{\mathrm{red}}
= c_{1}[T^{*}\PP^{1}] + c_{2}[\PP^{1}\times\PP^{1}].
\end{aligned}
$$

These coefficients are thus determined from the case $G = \mathrm{SL}_2$, where Theorem \ref{formula} has already been established in this case (\cite{bryan},\cite{gwan}).  The action of the cycle $[\PP^{1}\times\PP^{1}]$ is the operator $s-1$ on $H^{*}(T^{*}\PP^{1},\CC)$, so we see that $c_{1} = 0$ and $c_{2} = 1/d$.

\end{proof}

The proof of Theorem \ref{formula} now follows from the following result at the end of section \ref{lusztig-explicit}.

\begin{lemma}

$$
[W_{z}] = s_{\alpha}-1
$$

\end{lemma}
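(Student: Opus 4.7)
The plan is to reduce the identity $[W_z] = s_\alpha - 1$ to Lusztig's rank-one calculation recalled at the end of Section \ref{lusztig-explicit}, by combining the fiber bundle structure of $X_z$ over $A_z = G/M_z$ with the Weyl-group symmetry of the Grothendieck simultaneous resolution $\tX \to \grt^*$.

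First I would reduce to the case where $\alpha$ is simple. Given an arbitrary positive root, write $\alpha = w(\alpha_i)$ with $w \in W$ and $\alpha_i \in \Pi$. The Weyl group acts on $\tX$ covering its standard action on $\grt^*$, so a lift $\dot w \in N_G(T)$ provides a $G$-equivariant isomorphism $X_z \cong X_{w^{-1}(z)}$ which carries the correspondence $W_z$ onto $W_{w^{-1}(z)}$. Under the induced identification of $H^*_G(X_z)$ with $\Sym(\grt^*)$, Weyl-group elements conjugate: since $s_\alpha = w s_i w^{-1}$, the claim for $\alpha$ reduces to $[W_{z'}] = s_i - 1$ for a generic $z' \in H_{\alpha_i}$ and simple $\alpha_i$.

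For simple $\alpha_i$ the centralizer $M_{z'} = L_i$ is the Levi of the minimal parabolic $P_i$, so the fiber bundle description gives $X_{z'} = G \times_{L_i} T^*\PP^1$ and
\[
W_{z'} = G \times_{L_i}\bigl(\PP^1 \times \PP^1\bigr) \;\subset\; X_{z'} \times_{Y_{z'}} X_{z'} = G \times_{L_i}\bigl(T^*\PP^1 \times_{\calN'} T^*\PP^1\bigr),
\]
where $\calN'$ is the nilpotent cone of $\mathfrak{sl}_2$. The affine fibration $G/L_i \to G/P_i = \calP_i$ induces an isomorphism on $G$-equivariant cohomology, and under the resulting identification $W_{z'}$ corresponds to the conormal bundle of $\calB \times_{\calP_i} \calB$ in $T^*\calB \times T^*\calB$, that is, to Lusztig's class $W_i$ recalled in Section \ref{lusztig-explicit}. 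The operator on $H^*_G(T^*\calB) = \Sym(\grt^*)$ induced by $W_i$ is then $s_i - 1$ by the construction of Lusztig's action, completing the proof.

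The main subtlety is justifying the last identification of $W_{z'}$ with $W_i$ at the level of $G$-equivariant Borel-Moore homology classes, since $X_{z'}$ and $T^*\calB = X_0$ are distinct fibers of $\tX \to \grt^*$. This is handled by a specialization argument inside the codimension-one family of Steinberg-type correspondences living over $H_{\alpha_i} \subset \grt^*$: the family specializes to $W_i$ at the origin and to $W_{z'}$ at generic points of $H_{\alpha_i}$, and deformation invariance of Borel-Moore classes forces the equality. An alternative route is to compute both operators directly on $H^*_G(T^*\calB) = H^*_T(pt) = \Sym(\grt^*)$ using the $L_i$-equivariant rank-one computation on $T^*\PP^1$, where $L_i$ acts through its semisimple quotient $L_i/Z(L_i) \cong PGL_2$ and the class $[\PP^1\times\PP^1]$ visibly induces the operator $s_i - 1$.
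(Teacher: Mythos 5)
Your first step invokes a Weyl group action on $\tX$ ``covering its standard action on $\grt^*$,'' but no such action exists: the $W$-action on the Grothendieck simultaneous resolution is only defined birationally, over the regular semisimple locus $\grt^*_{\mathrm{rs}}$, and does not extend to all of $\tX$. (Indeed, this failure to extend is precisely the source of the non-geometric Springer $W$-action on cohomology of Springer fibers.) In particular it is not defined over the subregular stratum where $z\in H_\alpha$ lies, so the conjugation step $X_z\cong X_{w^{-1}(z)}$ as you set it up is not available, and the reduction to simple $\alpha$ fails. Fortunately this reduction is unnecessary: for $z$ generic in $H_\alpha$ with $\alpha$ \emph{any} positive root, $M_z$ is already the rank-one Levi $L_\alpha$ with root system $\{\pm\alpha\}$, containing $T$ and with Weyl group $\{1,s_\alpha\}\subset W$, so the rank-one calculation applies directly, no passage to simple roots required.

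Your second step also has a problem. The claim that ``$W_{z'}$ corresponds to the conormal bundle of $\calB\times_{\calP_i}\calB$'' cannot hold at the level of subvarieties: $X_{z'}$ and $T^*\calB$ are distinct fibers of $\tX\to\grt^*$, and their fiber products over their affinizations are very different ($X_{z'}\times_{Y_{z'}}X_{z'}$ has two components while $\St$ has $|W|$). What is needed is an equality of Borel--Moore classes (equivalently, operators on $\Sym(\grt^*)$) under the topological trivialization of the family. Your proposed specialization argument would moreover need to rule out extra components appearing in the flat limit of $W_{z'}$ as $z'\to 0$, which you do not address. The short correct argument is essentially your ``alternative route'': identify $H^*_G(X_z)=H^*_{M_z}(T^*\PP^1)=\Sym(\grt^*)$ and observe that $[W_z]=[G\times_{M_z}(\PP^1\times\PP^1)]$ acts as the $M_z$-equivariant Lusztig--Steinberg correspondence $[\PP^1\times\PP^1]$ on $T^*\PP^1$, which by the result recalled at the end of Section~\ref{lusztig-explicit}, applied to the rank-one group $M_z$, is $s_\alpha-1$; this is compatible with the identification $H^*_G(X_z)\cong H^*_G(T^*\calB)$ (which preserves divisor classes and hence the $\Sym(\grt^*)$-module structure, and therefore the reflection $s_\alpha$). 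This direct use of the rank-one Lusztig computation is what the paper intends; your main route through the $W$-action on $\tX$ and the cycle-level identification with $W_i$ does not work.
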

%----------------------------------------------------------------------------------------------------------
\section{Shift operators}\label{shift}

We now explain the geometric construction of the shift operators, discussed in section \ref{shift-intro}, that intertwine the monodromy representation of the quantum connection for different values of the equivariant parameters.

Let us choose an integral parameter $s \in \Lambda^\vee\oplus\ZZ$.  We can associate to $s$
a principal $\bT$-bundle over $\PP^1$
$$P(s) = (\bT\times (\CC^2\backslash \{0\})/\CC^*,$$
where $\CC^*$ acts on $\bT$ via the 1-parameter subgroup defined by $s$.
Let
$$\calX(s) = P(s)\times_{\bT} X \rightarrow \PP^1$$ be the associated fibration over $\PP^1$ with fiber $X$.
There exists a unique lifting of the $\CC^*$ action on $\PP^1$ to an action on $\calX(s)$ so that the fiber $\calX(s)_0$ of $0 \in \PP^1$ is fixed pointwise by the action, giving an action of
$\widetilde{\bT}= \bT\times\CC^*$ on $\calX(s)$.  Let $z$ denote the equivariant parameter associated to the extra torus factor.

Given a weight $\lambda \in \Lambda$, there is again an associated equivariant line bundle $D_{\lambda,s}$ on $\calX(s)$, compatible with restriction to fibers.  Given $\beta \in H_{2}(\calX(s),\ZZ)$, we can use these line bundles to assign a natural monomial function $q^{\beta}$ on the dual torus $T^{\vee}$.

We will only consider curve classes $\beta \in H_{2}(\calX(s),\ZZ)$ such that
$$\pi_*\beta = [\PP^1],$$ and the corresponding moduli spaces of stable maps
$$\Mbar_{0,2}(\calX(s),\beta).$$  Let
$$\iota_{0}, \iota_{\infty}: X \hookrightarrow \calX(s)$$
denote the inclusions of the fiber over $0$ and $\infty$ respectively.
Given $\gamma_{1}, \gamma_{2} \in H_{\bT}^{*}(X),$
the two-pointed Gromov-Witten invariant
$$\langle \gamma_{1}, \gamma_{2}\rangle^{\calX(s)}_{0,2,\beta} =
\int_{[\Mbar_{0,2}(\calX(s),\beta)]^{\mathrm{vir}}} \ev_1^*(\iota_{0,*}\gamma_{1})\cup
\ev_2^*(\iota_{\infty,*}\gamma_{2}) \in \mathrm{Frac} H_{\widetilde{\bT}}^*(X).$$
In order to make sense of the above expression, for each fiber over $0$ and $\infty$, we use the identification $H_{\widetilde{\bT}}^*(X) = H_{\bT}^{*}(X) \otimes \CC[z]$
to lift $\gamma_1$ and $\gamma_2$ to $\widetilde{\bT}$-equivariant cohomology.

The corresponding shift operator
$$\bS(s) \in \End(H_{\bT}^{*}(X)) \otimes \CC[[\Lambda]]$$
is then defined via the pairing
$$\langle \gamma_{1}| \bS(s)| \gamma_{2}\rangle
= \sum_{\substack{\beta \in H_{2}(\calX(s),\ZZ)\\ \pi_*\beta = [\PP^1]}}
q^{\beta}\left( \langle \gamma_{1}, \gamma_{2}\rangle^{\calX(s)}_{0,2,\beta} \right)\mid_{z=1}.
$$

The intertwiner property of $\bS(s)$ can be seen as follows.  We first describe the contribution of the residues associated to classical multiplication.
Given $x \in X^{T}$ associated to the Weyl group element $w_x \in W$, let
$$\phi_x= (w_x, 0): \mathfrak{t}^* \rightarrow \mathfrak{t}^{*}\oplus \CC = (\mathrm{Lie}(\bT))^{*}$$ denote the associated linear map, which we can view as a multi-valued
function on $T^{\vee}$, depending on a parameter $a \in \mathrm{Lie}(\bT)$.
We define the operator $q^{M}$ on $H_{\bT}^{*}(X)$ which is diagonal in the fixed-point basis, with diagonal entries given by the function $\phi_x$.

In the neighborhood of the base point
$$q^{\alpha_{1}^{\vee}}= q^{\alpha_{2}^{\vee}} = \cdots = 0,$$
there exists a unique fundamental solution of the quantum differential equation, normalized to have the form
$$\Psi(q,a)= q^{M}(\mathrm{Id} + \mathrm{O}(q^{\alpha_{i}^{\vee}})).$$
Here, we have made explicit the dependence of $\Psi$ on the equivariant parameters
$a \in \mathrm{Lie}(\bT)$.
There exists a geometric expression for $\Psi(q,a)$ in terms of descendent invariants (see, for example, \cite{CoxKatz}).

We also define the operator $\Delta(s)$ on $H_{\bT}^{*}(X)$, so that it is diagonal in the fixed-point basis, with diagonal entries given by the ratio
$$\Delta(s)_{p,p} = \prod_{\substack{\text{tangent weights } \rho\\ \text{at }x \in X}} \frac{\Gamma(\rho + s(\rho))}{\Gamma(\rho+1)}.$$

It follows from equivariant localization with respect to $\widetilde{\bT}$
that $\bS(s)$ can be factored in terms of the fundamental solution and $\Delta$.
\begin{equation}
\bS(s) = \Psi(q,a) \circ \Delta(s)\circ \Psi^{-1}(q, a+s).
\end{equation}

From here, we see immediately that the operators $\bS(s)$ satisfy the following intertwiner and composition identities:
\begin{align}
\nabla(a) \, \bS(s) &= \bS(s) \, \nabla(a+s) \\
\bS(s_1+s_2) &= \bS(s_1)\circ\bS(s_2)|_{a=a+s_1}
\end{align}

In order to compare the monodromy representation after integral parameter shifts
$$a \mapsto a+s,$$
we use the following proposition.
\begin{prop}\label{intertwiner-rational}
The matrix entries of $\bS(s)$ are rational functions on $\CC(T^{\vee})$:
$$\bS(s) \in \End(H_{\bT}^{*}(X)) \otimes \CC(T^{\vee}).$$
\end{prop}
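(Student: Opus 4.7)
The key input is the factorization
$$\bS(s) = \Psi(q,a) \circ \Delta(s) \circ \Psi^{-1}(q,a+s)$$
established above via $\widetilde{\bT}$-equivariant localization. This reduces rationality to showing that the multivalued factors of $\Psi$ and $\Psi^{-1}$ combine into a single-valued rational operator on $T^\vee$. My plan is first to exhibit single-valuedness in $q$ using the explicit leading behavior $\Psi = q^M(\mathrm{Id} + O(q^{\alp^\vee_i}))$, and then to promote the resulting meromorphic structure to rationality.

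For single-valuedness, work in the fixed-point basis, where $q^M$ is diagonal. Because $s \in \Lam^\vee \oplus \ZZ$ is integral, the product of leading factors $q^{M(a)} \cdot q^{-M(a+s)}$ has diagonal entries that are Laurent monomials on $T^\vee$: the $a$-dependence in the exponent cancels by linearity, leaving an integer character coming from $\phi_x(s) \in \Lam^\vee = H_2(X,\ZZ)$. The remaining correction factor $(\mathrm{Id}+O(q^{\alp^\vee_i}))$ in each $\Psi$ has coefficients rational in the equivariant parameters, and $\Delta(s)$ is diagonal and independent of $q$. Therefore the matrix entries of $\bS(s)$ are convergent power series in the $q^{\alp^\vee_i}$ near the large-radius point, single-valued in $q$.

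For rationality in $q$, the cleanest route is via the Cherednik--Matsuo identification (Proposition~\ref{cher-matsuo}) of the quantum $D$-module with the trigonometric Calogero--Moser $D$-module $\CM_t$. Under this identification $\bS(s)$ matches the Opdam shift operators (\cite{opdam,heckman}), which by construction are elements of $\End(H^*_\bT(X)) \otimes \CC(T^\vee)$. A self-contained alternative starts from the intertwining relation $\nabla(a)\bS(s) = \bS(s)\nabla(a+s)$: since $\nabla$ is rational on $T^\vee$ with first-order poles along the root hyperplanes $\alp^\vee = 1$ by Theorem~\ref{formula}, $\bS(s)$ extends meromorphically to all of $T^\vee$ with poles only along these hyperplanes. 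The composition law $\bS(s_1+s_2) = \bS(s_1)\circ \bS(s_2)|_{a=a+s_1}$ together with $W$-equivariance of $\nabla$ then reduces the bound on pole orders to a local calculation near a single large-radius point of $\Kb$.

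The main technical obstacle in the self-contained route is bounding pole orders uniformly along the hyperplanes $\alp^\vee = 1$. A clean way to do this is to expand $\bS(s)$ around each $T^\vee$-fixed point of $\Kb$ using $W$-equivariance, observe that each such expansion has matrix entries that are power series in local coordinates on $\Kb$, and conclude by a Hartogs-type extension that $\bS(s)$ is rational. Via the Opdam route this obstacle is bypassed entirely, at the cost of invoking the full Cherednik--Matsuo correspondence.
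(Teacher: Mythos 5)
Your approach is genuinely different from the paper's. The paper's proof is geometric throughout: via the composition law it reduces to the two generators $s = (\delta,0)$ (with $\delta$ a fundamental weight of $T^\vee$) and $s = (0,1)$; for $(\delta,0)$ it is a virtual dimension count on $\Mbar_{0,0}(\calX(s),\beta)$ combined with properness of the space of maps meeting Schubert conormal cycles, showing the matrix entries are degree-zero equivariant polynomials, hence constants, which are then computed by specializing to $t=0$ and deforming $\calX(s)$ to have affine fibers; for $(0,1)$ the paper introduces the subspace $V$ of $v \in H_{\bT}(X)\otimes\CC(T^\vee)$ for which $\bS(s)v$ is rational, proves $1\in V$ by properness plus the specialization $t\in\{0,1\}$ (at which the Calogero--Moser connection trivializes), shows $V$ is $\nabla_\lambda$-stable using the intertwiner relation, and concludes $V=H_{\bT}(X)\otimes\CC(T^\vee)$ because divisors generate. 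Your route instead leans on the $\Psi\,\Delta\,\Psi^{-1}$-factorization and on imported Calogero--Moser theory, whereas the paper invokes the intertwiner relation only once (as a closure property of $V$) and stays geometric. This matters for the overall architecture: the paper establishes rationality independently of Cherednik--Matsuo, and then derives the identification of $\bS(s)$ with Opdam's shift operators \emph{as a consequence}, via irreducibility of the monodromy; in your Opdam route this identification is used as an input.

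Two concrete problems. First, the ``single-valuedness'' step is not a separate verification: by definition $\bS(s)$ is a generating series of Gromov--Witten invariants, hence a power series in the $q^{\alpha_i^\vee}$ times a fixed monomial; your observation that $q^{M(a)}q^{-M(a+s)}$ is a Laurent monomial is a consistency check on the factorization, not new information. Second, and more seriously, your self-contained route has a genuine gap. The intertwiner relation $\nabla(a)\,\bS(s) = \bS(s)\,\nabla(a+s)$ gives analytic continuation of $\bS(s)$ along paths in $T^\vee_{\reg}$, but it does \emph{not} give meromorphic extension across the root hyperplanes: the relation is equally satisfied by $\Psi(q,a)\,\Delta(s)\,\Psi^{-1}(q,a+s)$ for any fundamental solution $\Psi$, and such $\Psi$ have genuine monodromy, so without an a priori bound on growth near the singular locus no meromorphy follows. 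The composition law and $W$-equivariance relate expansions at different large-radius points of $\Kb$, but they do not supply pole-order bounds; the proposed Hartogs-type extension needs a boundedness hypothesis you have not provided. Finally, if you do take the Opdam route, you must make explicit the step that identifies the geometric $\bS(s)$ with Opdam's shift operator up to a $q$-independent scalar — this requires irreducibility of the monodromy for generic equivariant parameters (so that intertwiners are unique up to scalar), which should be stated rather than left implicit.
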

\begin{proof}

We will only sketch the proof here.
In the case of $\mathrm{Hilb}(\CC^{2})$ and higher-rank framed sheaves, a version of this argument can be found in
\cite{moop} and \cite{MO}.
We refer the reader to these papers for a more detailed exposition.

Using the composition identity for $\bS(s)$, it suffices to prove the proposition for
$$s = (\delta,0), (0,1) \in \Lambda^{\vee}\oplus \ZZ,$$
where $\delta$ is a fundamental weight of $T^{\vee}$.
The virtual dimension of the moduli spaces $\Mbar_{0,0}(\calX(s),\beta)$ are, respectively,
$2\dim \BB$ and $\dim \BB$.

In the first case, if we choose insertions over $0$ and $\infty$ from a basis of Schubert conormal Lagrangians in $X$, the relevant two-point invariants will be degree $0$.  Moreover, there exists a choice of basis for which the space of stable maps meeting these cycles is proper.  As a consequence, the associated matrix entries are degree $0$ equivariant polynomials, thus constant.  If we specialize to $t=0$, we can deform $\calX(s)$ so that it has affine fibers, and only sections of $\pi$ contribute.  In particular, the answer can be calculated directly (e.g. using the case of $\BB = \PP^1$).

The second case is more involved.  We consider the subspace $V$ of vectors
$v\in H_{\bT}(X)\otimes \CC(T^{\vee})$ for which
$\bS(s)v$ has rational function entries.  We require two claims about $V$:
\begin{itemize}
\item $V$ contains the identity element
$1 \in H_{\bT}^*(X)$
\item $V$ is closed under application of the differential operators
$\nabla_{\lambda}.$
\end{itemize}
The first claim follows again via equivariant specialization.  By construction, $\Mbar_{0,2}(\calX(s),\beta)$ is proper - any section of $\calX(s) \rightarrow\PP^1$ is forced to lie inside $\BB\times\PP^1$.  Therefore, when expressed in a basis of Schubert conormals, every entry in $\bS(s)\circ 1$ is a nonequivariant constant, so the claim can be checked after setting $t=0$.  If we apply virtual localization, we require the specialization of the fundamental solution of the quantum connection when $t=0,1$.  However, it follows from the form of the Calogero-Moser system given in section \ref{calogero} that the connection becomes essentially trivial at these values, yielding the result.
The second claim follows immediately from the intertwiner property.  Since classical cohomology is generated by divisors, we then conclude from these two claims that $V = H_{\bT}(X)\otimes \CC(T^{\vee}).$
\end{proof}

\begin{cor}
For $a \in \mathrm{Lie}(\bT)$ away from a discriminant locus (depending on $s$), the monodromy representation of the quantum differential equation $\nabla(a)$ is isomorphic to the monodromy representation
for $\nabla(a+s)$
\end{cor}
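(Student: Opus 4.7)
The plan is to deduce the corollary directly from the intertwiner identity $\nabla(a)\bS(s) = \bS(s)\nabla(a+s)$ together with Proposition \ref{intertwiner-rational}. Once $\bS(s)$ is known to be a rational gauge transformation between the two connections, the standard principle that gauge-equivalent flat connections have conjugate monodromy will do the rest; the bulk of the argument is simply organising where $\bS(s)$ is actually invertible.

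First I would invoke Proposition \ref{intertwiner-rational} to view the entries of $\bS(s)$ as elements of $\CC(T^{\vee})$, so that $\det \bS(s)$ is a rational function on $T^{\vee}$ whose coefficients depend meromorphically on $a \in \Lie(\bT)$. The discriminant locus $\calD(s) \subset \Lie(\bT)$ will then be defined as the set of $a$ for which $\det \bS(s)$ vanishes identically in $q$, together with the poles of the Gamma factors entering $\Delta(s)$. This is a proper analytic subset: from the factorization $\bS(s) = \Psi(q,a)\Delta(s)\Psi^{-1}(q,a+s)$ and the invertibility of $\Psi(q,a)$ as a formal series near the large-radius point, $\det \bS(s)$ is a nontrivial rational function for generic $a$. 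For $a \notin \calD(s)$, I would let $\Sigma(s,a) \subset T^{\vee}$ be the union of $T^{\vee} \setminus T^{\vee}_{\reg}$ with the zero and pole divisors of $\det \bS(s)$. On the open set $U := T^{\vee} \setminus \Sigma(s,a)$, the operator $\bS(s)$ is an honest automorphism of the trivial bundle intertwining the two flat connections.

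Next, I would fix a basepoint $q_0 \in U$. Analytic continuation of flat sections then yields monodromy representations $\rho_a, \rho_{a+s} : \pi_1(U, q_0) \to GL(H_{\bT}^*(X))$, and the intertwiner identity gives the pointwise relation
\[
\rho_a(\gamma) \;=\; \bS(s)(q_0)\, \rho_{a+s}(\gamma)\, \bS(s)(q_0)^{-1}, \qquad \gamma \in \pi_1(U, q_0).
\]
To upgrade this into a conjugacy of representations of $\pi_1(T^{\vee}_{\reg}, q_0)$, I would use that $\Sigma(s,a) \cap T^{\vee}_{\reg}$ is of complex codimension at least one, so the inclusion $U \hookrightarrow T^{\vee}_{\reg}$ induces a surjection on fundamental groups. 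Because the monodromy representations of $\nabla(a)$ and of $\nabla(a+s)$ on $\pi_1(T^{\vee}_{\reg},q_0)$ are determined by their restrictions along this surjection, conjugacy on $\pi_1(U, q_0)$ descends to conjugacy on $\pi_1(T^{\vee}_{\reg}, q_0)$, which is the assertion of the corollary.

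The only serious obstacle has already been overcome in Proposition \ref{intertwiner-rational}, where rationality of $\bS(s)$ is established; the remaining steps are formal gauge-theoretic bookkeeping, and the only care needed is to exclude the codimension one locus of $a$ at which the gauge transformation collapses.
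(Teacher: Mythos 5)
Your proposal is correct and takes essentially the approach the paper leaves implicit: the paper states the corollary without a proof, relying on the reader to observe that a rational intertwiner (Proposition 5.4) which is invertible away from a discriminant locus in $a$ implements a gauge equivalence, hence a conjugacy of monodromies. Your writeup carefully supplies the missing bookkeeping — the definition of the discriminant via $\det\bS(s)$ and the Gamma-factor poles, and the surjectivity of $\pi_1(U)\to\pi_1(T^\vee_\reg)$ after removing the complex-codimension-one bad locus in $q$ — which is exactly what is needed to make the formal argument precise.
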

Notice that the intertwiners $\bS(s)$ may have acquire singularities as a function of $a$, accounting for the discriminant locus in the statement of the corollary.  It follows from irreducibility of the monodromy representation that the geometric intertwiners agree with the Opdam shift operators, up to a global equivariant constant.

If we consider a general equivariant symplectic resolution as in section \ref{geom_sympl}, we expect analogous statements to hold; however the arguments given here only partially extend.  While the definition of the geometric intertwiner is completely formal, one needs to check that the proof in Proposition \ref{intertwiner-rational} holds in any given geometry.  In particular, the simplification of the quantum connection for $t=1$ and the generation of the quantum ring by divisors are the key statements that need to be established.

%------------------------------------------------------------------------------------------------------------
\section{The limit $t\to \infty$: algebra}\label{limit-algebra}
The purpose of the next two sections is to explain how Theorem \ref{formula} allows us
to compute the quantum cohomology of $\BB$.

\medskip
\noindent
\subsection{The connection $\onabla$}
Similarly to the affine KZ connection $\nabla$ we can define the Toda connection $\onabla$, which is a connection
on $T^{\vee}$
with values in $\calH_{nil}$.  More precisely, given a module $\oM$ over $\calH_{nil}$
we have a connection on the trivial bundle over $T^{\vee}$ with fiber $M$ over $T^{\vee}$
defined as follows. First, let us recall the subset  $R_+'$  of $R_+$ defined in section \ref{int-limit}.
Then we define
\begin{equation}\label{onabla}
\onabla_\lam=d_\lam-\sum\limits_{\alp\in R_+'} (\lam,\alp^{\vee})q^{\alp^{\vee}}\os_{\alp}-x_\lam.
\end{equation}
We are going to show that $\onabla$ is integrable a little later. We shall refer to $\onabla$ as
{\em the Toda connection}; the reason for this name is explained in section \ref{toda}.

%$$
%\begin{aligned}
%\onabla_{\lam}\onabla_{\mu}=d_{\lam}d_{\mu}-\sum\limits_{\alp>0}(\lam,\alp^{\vee})(\mu,\alp^{\vee})q^{\alp^{\vee}}
%\os_{\alp}d_{\lam}-\sum\limits_{\alp>0}(\lam,\alp^{\vee})q^{\alp^{\vee}}\os_{\alp}d_{\mu}+\\
%\sum\limits_{\alp,\beta>0} (\lam,\alp^{\vee})(\mu,\beta^{\vee})q^{\alp^{\vee}+\beta^{\vee}}\os_{\alp}\os_{\beta}+\\
%\sum\limits_{\beta>0}(\mu,\beta^{\vee})q^{\beta^{\vee}}x_{\lam}s_{\beta}+
%\sum\limits_{\alp>0}(\lam,\alp^{\vee}}q^{\alp^{\vee}}s_{\alp} x_{\mu}+x_[\lam}x_{\mu}
%\end{aligned}
%$$

We now claim that under the above circumstances the connection
$\onabla$ is "the limit of $\nabla$ as $t\to\infty$" in the appropriate sense.
More precisely, we have the following
\begin{proposition}\label{prop-limit}
The limit
$$
\lim\limits_{t\to\infty}\,  t^{2\rho}\nabla\,  t^{-2\rho}
$$
exists and is equal to $\onabla$ (which is a connection on $T^{\vee}$ with values in the trivial bundle with fiber $\oM$).
In other words, $\onabla$ is the $t\to\infty$-limit of $\nabla$ if we make the change of variables
$q\to t^{-2\rho}q$.
\end{proposition}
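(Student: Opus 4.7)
The plan is a direct computation. Apply the substitution $q \mapsto t^{-2\rho} q$ (equivalently, the conjugation by translation encoded in the notation $t^{2\rho}\nabla t^{-2\rho}$) to formula \eqref{connection} and track the surviving powers of $t$ term by term. Since $d_\lam$ is translation-invariant on $T^\vee$ and $x_\lam$ involves neither $q$ nor $t$, both pass through the limit unchanged and produce the $d_\lam - x_\lam$ portion of $\onabla_\lam$. Under the substitution we have $q^{\alp^\vee} \mapsto t^{-2h(\alp)} q^{\alp^\vee}$, where $h(\alp) := (\rho,\alp^\vee)$; if in addition we rewrite $s_\alp = t^{\ell(s_\alp)}\tils_\alp$ via the $\PP^1$-family $\calH_{\PP^1}$ (so that $\tils_\alp$ tends to $\os_\alp \in \calH_{nil}$ as $t\to\infty$), then the $\alp$-summand of the root sum becomes
\[
-(\lam,\alp^\vee)\,\frac{t^{1+\ell(s_\alp)-2h(\alp)}\,q^{\alp^\vee}}{1 - t^{-2h(\alp)}q^{\alp^\vee}}\,\tils_\alp
\;+\;(\lam,\alp^\vee)\,\frac{t^{1-2h(\alp)}\,q^{\alp^\vee}}{1 - t^{-2h(\alp)}q^{\alp^\vee}}.
\]
Because $h(\alp)\ge 1$ for every positive root, the second piece carries $t^{1-2h(\alp)}\le t^{-1}$ and vanishes in the limit; the limiting behavior of the first piece is controlled by the sign of the exponent $1+\ell(s_\alp)-2h(\alp)$.

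The main obstacle is then a purely combinatorial lemma about root systems: for every $\alp\in R_+$,
\[
\ell(s_\alp)\;\le\;2h(\alp)-1,\qquad\text{with equality if and only if }\alp\in R_+'.
\]
My approach would be to prove the explicit identity $\ell(s_\alp) = 2\min(\operatorname{ht}(\alp),h(\alp))-1$ by induction on height, tracking the positive roots sent to negative by $s_\alp$. The comparison between the two heights is then a direct consequence of the normalization relating simple roots and simple coroots: writing $\alp = \sum a_\beta\beta$, one checks that if $\alp$ is short then $h(\alp)-\operatorname{ht}(\alp)=\sum_{\beta\text{ long simple}}a_\beta\ge 0$ with equality iff $\alp$ involves no long simple root, whereas if $\alp$ is long then $\operatorname{ht}(\alp)-h(\alp)=\tfrac{1}{2}\sum_{\beta\text{ short simple}}a_\beta\ge 0$. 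Thus $\min(\operatorname{ht},h)=h$ precisely when $\alp$ is long or is short with no long simple root in its expansion---which is exactly the definition of $R_+'$.

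Given the lemma, the conclusion is immediate. For $\alp\in R_+'$ the exponent $1+\ell(s_\alp)-2h(\alp)$ vanishes, so the $\alp$-summand tends to $-(\lam,\alp^\vee)\,q^{\alp^\vee}\,\os_\alp$; for $\alp\notin R_+'$ the exponent is strictly negative and the summand goes to $0$. Collecting the surviving terms reproduces exactly the root sum in \eqref{onabla}, finishing the proof.
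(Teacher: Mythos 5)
Your reduction is exactly the one the paper uses: substitute $q \mapsto t^{-2\rho}q$, rewrite $s_\alp = t^{\ell(s_\alp)}\tils_\alp$, observe that the $d_\lam$ and $x_\lam$ terms pass through unchanged, and reduce the convergence of the root sum to the combinatorial inequality $\ell(s_\alp) \le 2(\rho,\alp^\vee)-1$ with equality precisely on $R_+'$. Where you differ is in how that lemma is established: the paper cites Juteau's \cite{jut} (Lemma 1.3) for $\ell(s_\alp)=2(\rho,\alp^\vee)-1$ ($\alp$ long) and $\ell(s_\alp)=2(\rho^\vee,\alp)-1$ ($\alp$ short), and then compares $(\rho^\vee,\alp)$ with $(\rho,\alp^\vee)$. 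You instead propose the unified identity $\ell(s_\alp)=2\min(\operatorname{ht}(\alp),h(\alp))-1$, which is correct and equivalent to Juteau's statement given your comparison of the two heights, though the induction-on-height proof of this identity is only sketched, so in effect you are invoking the same nontrivial fact rather than sidestepping it.

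Two small inaccuracies in your height comparison: writing $\alp=\sum a_\beta\beta$ and letting $r$ denote the ratio of long to short squared lengths, one actually gets $h(\alp)-\operatorname{ht}(\alp)=(r-1)\sum_{\beta\text{ long simple}}a_\beta$ when $\alp$ is short, and $\operatorname{ht}(\alp)-h(\alp)=\tfrac{r-1}{r}\sum_{\beta\text{ short simple}}a_\beta$ when $\alp$ is long. Your formulas give the correct constant only for $r=2$ (types $B$, $C$, $F_4$) and are off in $G_2$ (where $r=3$): e.g.\ for $\alp=\alp_1+\alp_2$ short in $G_2$, $h-\operatorname{ht}=4-2=2$ while $\sum_{\beta\text{ long simple}}a_\beta=1$. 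The sign and the vanishing criterion are unaffected, so the overall argument still goes through, but the stated identities need the factor of $r-1$.
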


\noindent
{\bf Remark.}
Note that the singularities of the connection $\nabla$ on $T^{\vee}$ disappear when $t\to\infty$; however one can check that
in general the connection $\onabla$ will have irregular singularities at $\infty$. Also, the $W$-equivariance of
$\nabla$ is lost under the above limit procedure (this has to do with the fact that our change of variables
$q\to t^{-2\rho}q$ is not $W$-equivariant).
%----------------------------------------------------------------------------------------
\begin{proof}
>From equation \ref{connection} and from the definition of $\tilw$ we get
$$
t^{2\rho}\, \nabla_\lam\,  t^{-2\rho} =d_\lam+\sum\limits_{\alp>0}
\frac{t^{-(2\rho,\alp^{\vee})+1}q^{\alp^{\vee}}}{1-t^{-(2\rho,\alp^{\vee})}q^{\alp^{\vee}}}
(1-t^{\ell(s_{\alp})}
\tils_{\alp})+x_{\lam}.
$$
Thus, in order to prove Proposition \ref{prop-limit}, it is enough to check the validity of the following
\begin{lemma}
\begin{enumerate}
\item
For any $\alp\in R_+$ we have
$$
\ell(s_{\alp})\leq (2\rho,\alp^{\vee})-1.
$$
\item
The above inequality is an equality if and only if $\alp\in R_+'$.
\end{enumerate}
\end{lemma}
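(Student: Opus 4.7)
\medskip
\noindent\emph{Proof plan.} The starting point is the identity $(2\rho,\alpha^\vee)=\sum_{\beta\in R_+}(\beta,\alpha^\vee)$, which I will analyze via the involution $s_\alpha$ acting on $R_+$. The positive roots split into four types: the singleton $\{\alpha\}$; fixed points $\{\beta>0:(\beta,\alpha^\vee)=0\}$; two-cycles $\{\beta,s_\alpha\beta\}$ with both roots in $R_+$; and inversion pairs $\{\beta,-s_\alpha\beta\}$ with $\beta>0$ and $s_\alpha\beta<0$, $\beta\ne\alpha$. The first piece contributes $(\alpha,\alpha^\vee)=2$, the fixed points contribute $0$, each two-cycle contributes $(\beta,\alpha^\vee)+(s_\alpha\beta,\alpha^\vee)=0$, and each inversion pair contributes $2(\beta,\alpha^\vee)$, a positive even integer. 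Since $|\mathrm{Inv}(s_\alpha)|=1+2m$ where $m$ is the number of inversion pairs, one has $\ell(s_\alpha)=2m+1$ and
$$(2\rho,\alpha^\vee)=2+2\sum_{\text{pairs}}(\beta,\alpha^\vee)\;\ge\;2+2m\;=\;\ell(s_\alpha)+1,$$
which proves (1). Equality holds iff $(\beta,\alpha^\vee)=1$ for every inversion pair.

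To identify the equality case, split on the length of $\alpha$. If $\alpha$ is long, the Cartan integer bound $|(\beta,\alpha^\vee)|\cdot|(\alpha,\beta^\vee)|\le 3$ together with $|\alpha|\ge|\beta|$ forces $|(\beta,\alpha^\vee)|\le 1$ for every $\beta\ne\pm\alpha$, so equality is automatic; this agrees with the fact that all long roots are declared to lie in $R_+'$. If $\alpha$ is short, values $(\beta,\alpha^\vee)\ge 2$ are only possible when $\beta$ is long, so equality is equivalent to $s_\alpha$ preserving the set $R_L^+$ of positive long roots.

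The forward implication of the equality criterion is clean. When $\alpha\in R_+'$ is short, it lies in $V_s:=\mathrm{span}(\Pi\cap R_S)$ (the short simple roots), hence is a root of the parabolic subsystem $R\cap V_s$, so $s_\alpha$ belongs to $W_S:=\langle s_{\alpha_i}:\alpha_i\text{ short simple}\rangle$. Each short simple reflection $s_{\alpha_i}$ permutes $R_+\setminus\{\alpha_i\}$ and preserves root length; since $\alpha_i$ is short, $R_L^+\subset R_+\setminus\{\alpha_i\}$, hence $s_{\alpha_i}(R_L^+)\subset R_L^+$, and therefore $W_S$ stabilizes $R_L^+$.

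The reverse implication is where I expect the main difficulty. Given $\alpha$ short with some long simple root $\alpha_j$ in its support, I must exhibit a positive long root $\beta$ with $s_\alpha(\beta)<0$. I do not see a uniform argument, and I would resort to a case-by-case check of the doubly-laced types: in $B_n$, for $\alpha=e_k$ with $k<n$ the witness is $\beta=e_k+e_n$; in $C_n$, for $\alpha=e_i+e_j$ with $i<j$ the witness is $\beta=2e_i$; the small exceptional types $F_4$ and $G_2$ can be checked by direct inspection. This classification step is the only non-formal ingredient of the proof.
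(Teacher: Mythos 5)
Your argument is correct in outline but takes a genuinely different route from the paper. The paper does not prove the length estimate from scratch: it quotes Juteau's formulas ($\ell(s_\alpha)=(2\rho,\alpha^\vee)-1$ for $\alpha$ long, $\ell(s_\alpha)=(2\rho^\vee,\alpha)-1$ for $\alpha$ short) and then reduces the lemma to the inequality $(\rho^\vee,\alpha)\le(\rho,\alpha^\vee)$, which it verifies by expanding $\alpha^\vee$ in simple coroots and comparing coefficients of long and short simple coroots. Your orbit decomposition of $(2\rho,\alpha^\vee)=\sum_{\beta\in R_+}(\beta,\alpha^\vee)$ under $s_\alpha$ proves part (1) in a self-contained way and yields the clean criterion ``equality iff every inverted $\beta\ne\alpha$ has $(\beta,\alpha^\vee)=1$,'' and your treatment of long $\alpha$ and of the forward implication for short $\alpha\in R_+'$ (via $s_\alpha\in W_S$ and the fact that short simple reflections permute the positive long roots $R_L^+$) is complete. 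One small unstated point: for the equivalence ``equality $\Leftrightarrow s_\alpha(R_L^+)=R_L^+$'' you also need that a \emph{long} positive $\beta$ inverted by a short $s_\alpha$ automatically has $(\beta,\alpha^\vee)\ge 2$; this is immediate from $(\beta,\alpha^\vee)=\frac{(\beta,\beta)}{(\alpha,\alpha)}(\alpha,\beta^\vee)$, but it is the direction of the equivalence your one-line justification does not cover.

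The genuine gap is the reverse implication for short $\alpha\notin R_+'$: as written it rests on an unexecuted type-by-type check (you give witnesses for $B_n$ and $C_n$ but defer $F_4$ and $G_2$ to ``inspection''). This can be closed uniformly inside your own framework, with no classification. Let $2\rho_L=\sum_{\beta\in R_L^+}\beta$. Since $s_\alpha$ preserves root lengths and $\pm\alpha\notin R_L$, your orbit decomposition applied to $R_L^+$ gives $(2\rho_L,\alpha^\vee)=2\sum(\beta,\alpha^\vee)\ge 0$, the sum running over inverted long pairs; hence equality in the lemma (for short $\alpha$) holds iff $(2\rho_L,\alpha^\vee)=0$. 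On the other hand $(\rho_L,\alpha_i^\vee)=0$ for every short simple $\alpha_i$ (because $s_{\alpha_i}$ permutes $R_L^+$, so it fixes $\rho_L$) and $(\rho_L,\alpha_i^\vee)=1$ for every long simple $\alpha_i$; writing $\alpha^\vee=\sum_i c_i\alpha_i^\vee$ with $c_i\ge 0$ and $c_i>0$ exactly on the support of $\alpha$, we get $(\rho_L,\alpha^\vee)=\sum_{\alpha_i\ \mathrm{long}}c_i$, which vanishes iff no long simple root lies in the support of $\alpha$, i.e.\ iff $\alpha\in R_+'$. This settles both implications of part (2) for short roots at once and makes your proof entirely case-free, in contrast with the paper's coefficient computation, which achieves the same uniformity but at the cost of importing Juteau's length formulas.
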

\begin{proof}
This lemma is essentially proved in Section 1 of \cite{jut}. More precisely, it is shown in \cite{jut}(Lemma 1.3)
that $\ell(s_{\alp})\leq (2\rho,\alp^{\vee})-1$ when $\alp$ is long and that
$\ell(s_{\alp})=(2\rho^{\vee},\alp)-1$ when $\alp$ is short. Thus it is enough to show that for
any short positive root $\alp$ we have
\begin{equation}\label{inequality}
(\rho^{\vee},\alp)\leq (\rho,\alp^{\vee}),
\end{equation}
where the equality holds if and only if $\alp\in R_+'$. Note that if $\alp$ is short, then $\alp^{\vee}$ is long.
Let $\Pi^{\vee}_{\lg}$ denote the set of simple long coroots and let $\Pi^{\vee}_{\sh}$ denote the set of simple short
coroots. Let
$$
\alp^{\vee}=\sum\limits_{\beta^{\vee}\in\Pi^{\vee}_{\lg}} a_{\beta}\beta^{\vee}+
\sum\limits_{\gamma^{\vee}\in\Pi^{\vee}_{\sh}} a_{\gam}\gam^{\vee}.
$$
Let us choose a $W$-invariant inner product on $\grt^*$ satisfying $(\alp,\alp)=1$ for every short
root $\alp$ and set
$$
r=\max\limits_{\alp\in R_+} (\alp,\alp).
$$
Then we have $(\rho,\alp^{\vee})=\sum a_{\beta}+\sum a_{\gam}$ and according to Section 1.2 of \cite{jut}
we have
$$
(\rho^{\vee},\alp)=\sum\limits_{\beta^{\vee}\in\Pi^{\vee}_{\lg}} a_{\beta}+
\frac{1}{r}\sum\limits_{\gamma^{\vee}\in\Pi^{\vee}_{\sh}} a_{\gam},
$$
which implies (\ref{inequality}).
\end{proof}
\end{proof}
\begin{cor}\label{cor-int}
The connection $\onabla$ is integrable.
\end{cor}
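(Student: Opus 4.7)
The strategy is to deduce integrability of $\onabla$ from that of $\nabla$ by passing to the limit in the identity supplied by Proposition \ref{prop-limit}. Recall from Section \ref{affine KZ} that the affine KZ connection $\nabla$ is integrable for every $t \in \CC$: this is Cherednik's theorem \cite{Cher-book}, transferred to our normalization via the scalar gauge $\del^t$, which plainly preserves flatness. So for each fixed finite $t$, $\nabla$ is a flat $\calH_t$-valued connection on $T^{\vee}_{\reg}$.

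Next, I would observe that for any $t \in \CC^*$, the map $q \mapsto t^{-2\rho} q$ is an automorphism of the algebraic torus $T^\vee$, so its pullback $\nabla^{(t)}$ of $\nabla$ remains an integrable $\calH_t$-valued connection on the preimage of $T^\vee_\reg$. By the computation carried out in the proof of Proposition \ref{prop-limit}, after rewriting the Weyl group elements via $\tilw = t^{-\ell(w)} w$ so as to work inside the $\PP^1$-family $\calH_{\PP^1}$ of Section \ref{hnil}, the coefficients of $\nabla^{(t)}$ are analytic in $t^{-1}$ near $t = \infty$, their singularities along the hyperplanes $q^{\alp^\vee} = 1$ drift away for large $t$, and their $t \to \infty$ limit recovers the coefficients of $\onabla$ on every compact subset of $T^\vee$.

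The final step is to push the integrability identity through this limit. For any $\lam,\mu \in \grt^*$, the curvature $[\nabla^{(t)}_\lam, \nabla^{(t)}_\mu]$ is a universal polynomial expression in the connection coefficients and their first derivatives, and it vanishes identically for every finite $t$ by the flatness established above. Since the coefficients converge uniformly together with their derivatives on every compact subset of $T^\vee_\reg$ as $t \to \infty$, the curvature passes to the limit, yielding $[\onabla_\lam, \onabla_\mu] = 0$ on $T^\vee_\reg$, and hence on all of $T^\vee$ by continuity of the (now regular) coefficients.

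I do not expect a serious obstacle: the only technical point is justifying the uniform convergence of first derivatives on compacta, and this is immediate from the explicit formula in the proof of Proposition \ref{prop-limit}, where the coefficients are rational in $t^{-1}$ and $q^{\alp^\vee}$ with denominators uniformly bounded away from zero for large $t$. (One could alternatively bypass the limit entirely and verify $[\onabla_\lam, \onabla_\mu]=0$ by direct computation using the nil-Hecke relations (\ref{nil-Hecke}), but the limit argument is conceptually cleaner and reuses work already done.)
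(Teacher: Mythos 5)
Your proposal is correct and follows essentially the same route as the paper: the paper's proof of this corollary is exactly "it follows from Proposition \ref{prop-limit} and the integrability of $\nabla$," i.e.\ flatness of the gauge-transformed, rescaled connection passes to the $t\to\infty$ limit. You merely spell out the (routine) analytic details — translation by $t^{-2\rho}$ preserving flatness, singularities leaving compacta, and uniform convergence of coefficients and derivatives — which the paper leaves implicit.
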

\begin{proof}
This follows immediately from Proposition \ref{prop-limit} and from the fact that $\nabla$ is integrable.
\end{proof}

%----------------------------------------------------------------------------------
\subsection{The Toda system}\label{toda}
Similarly to section \ref{calogero} one can define the quantum Toda integrable system, which is now a map
$\eta_T:\Sym(\grt^*)^W\to \calD(T^{\vee})$. The map $\eta_T$ is characterized by the following
properties:

\medskip
T1) For any $f\in\Sym(\grt^*)^W$ the highest symbol of $\eta_{T}(f)$ is equal to $f$;

T2) For any non-degenerate $W$-invariant quadratic form on $\grt^*$ as above one has
$$
\eta_T(C)=\Del-\sum\limits_{\alp\in \Pi}{(\alp^{\vee},\alp^{\vee})}q^{\alp^{\vee}}.
$$

\medskip
\noindent

It is known (cf. Section 7 of \cite{etingof}) that for any $f\in \Sym(\grt^*)^W$ one has
\begin{equation}\label{etingof}
\lim\limits_{t\to\infty}t^{2\rho}\eta_{CM,t}(f)t^{-2\rho}=\eta_T(f).
\end{equation}

Using the map $\eta_T$ we may view $\calD(T^{\vee})$ as a
$\calD(T^{\vee})\otimes \Sym(\grt^*)^W$-module, which we shall denote by $\calT$.
We shall call $\calT$ {\em the Toda $\calD$-module}.

We now want to present an analog of Theorem \ref{cher-matsuo} in the case
when the Calogero-Moser system is replaced by the Toda system. Namely,
by using the connection $\onabla$ we may view $\calO(T^{\vee})\otimes \ocM$ as a
$\calD(T^{\vee})\otimes \Sym(\grt^*)^W$-module.
We now claim the following analog of the Cherednik-Matsuo theorem for the
Toda lattice:
\begin{thm}\label{cher-matsuo-toda}
There exists an isomorphism
$$
\calO(T^{\vee})\otimes \ocM\simeq \calT
$$
of $\calD(T^{\vee})\otimes \Sym(\grt^*)^W$-modules.
\end{thm}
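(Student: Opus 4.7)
The natural strategy is to obtain the desired isomorphism as the $t \to \infty$ limit of the Cherednik-Matsuo isomorphism of Proposition \ref{cher-matsuo}, with the role of the limit clarified by first reducing to the identification of cyclic vectors.

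First, I would exhibit compatible cyclic vectors on the two sides. Set $v_0 := 1 \otimes 1 \in \calO(T^\vee) \otimes \ocM$, where the second $1$ denotes the constant polynomial in $\ocM = \Sym(\grt^*)$. Since $\os_\alpha(1) = 0$ (immediate from (\ref{action-formula-bar}) and the factorization of $\os_\alpha$ through simple reflections using (\ref{nil-Hecke})), the formula (\ref{onabla}) simplifies on $v_0$ to $\onabla_\lambda(v_0) = -1 \otimes \lambda$. By induction on degree, using $\os_i(\mu) = (\alpha_i^\vee,\mu)$ for $\mu \in \grt^*$, every $1 \otimes f$ with $f \in \Sym(\grt^*) = \ocM$ lies in the $\calD(T^\vee)$-span of $v_0$, so $v_0$ is cyclic. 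On the Toda side, $1 \in \calT = \calD(T^\vee)$ is tautologically a cyclic vector.

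The heart of the proof is then the identity
\begin{equation}\label{key-identity}
\eta_T(f) \cdot v_0 \,=\, 1 \otimes f \qquad \text{for every } f \in \Sym(\grt^*)^W,
\end{equation}
where the left-hand side interprets $\eta_T(f) \in \calD(T^\vee)$ as an operator on $\calO(T^\vee) \otimes \ocM$ via $\onabla$ (i.e.\ substituting $\onabla_\lambda$ for $d_\lambda$), and the right-hand side uses the $\Sym(\grt^*)^W$-action on $\ocM$ inherited from $\calH_{nil}$. Granting (\ref{key-identity}), the map $v_0 \mapsto 1$ intertwines both the $\calD(T^\vee)$- and $\Sym(\grt^*)^W$-actions and extends by cyclicity to a surjection of $\calD(T^\vee) \otimes \Sym(\grt^*)^W$-modules. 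Injectivity is then checked by specializing at a generic central character $\xi \in \grt/W$: both $\calO(T^\vee) \otimes \ocM_\xi$ and $\calT_\xi$ are $\calD(T^\vee)$-modules of rank $|W|$, irreducible for generic $\xi$ by Proposition \ref{principal series} combined with Corollary \ref{cor-int} on the left and by standard Toda theory on the right, so the map is an isomorphism.

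The main obstacle is establishing (\ref{key-identity}), which I propose to do by transporting the analogous Cherednik-Matsuo identity $\eta_{CM,t}(f) \cdot (1 \otimes 1) = 1 \otimes f$ in $\calO(T^\vee_{\reg}) \otimes \calM_t$ through the $t \to \infty$ limit. Under the substitution $q \mapsto t^{-2\rho} q$ together with conjugation by $t^{2\rho}$, Proposition \ref{prop-limit} converts $\nabla$ into $\onabla$, equation (\ref{etingof}) converts $\eta_{CM,t}(f)$ into $\eta_T(f)$, the $\calH_t$-module $\calM_t$ degenerates to the $\calH_{nil}$-module $\ocM$ as in Section \ref{hnil}, and the polar divisors $\{q^{\alpha^\vee} = 1\}$ of $\nabla$ migrate to $\{q^{\alpha^\vee} = t^{(2\rho,\alpha^\vee)}\}$ and recede to the boundary of $T^\vee$. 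The remaining technical work is to check that both sides of the Cherednik-Matsuo identity admit a regular $t \to \infty$ limit; this is most cleanly done by expanding both sides as a formal power series in the coordinates $q^{\alpha_i^\vee}$ at the large radius point, with coefficients lying in $\End(\ocM)$-valued polynomials in the equivariant parameters, and matching orders in the expansion to produce (\ref{key-identity}).
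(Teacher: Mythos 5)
Your skeleton---take $v_0=1\otimes 1$, prove $\eta_T(f)\cdot v_0=1\otimes f$, conclude by cyclicity---is in essence the paper's own direct proof, but the way you propose to obtain the key identity is exactly where the genuine gap lies. You want to transport a ``Cherednik--Matsuo identity'' $\eta_{CM,t}(f)\cdot(1\otimes 1)=1\otimes f$ through the $t\to\infty$ limit, but no such identity is available: Proposition \ref{cher-matsuo} only asserts the \emph{existence} of an isomorphism $\calO(T^{\vee}_{\reg})\otimes\calM_t\simeq \CM_t$, with no claim that it matches $1\in\CM_t$ with the naive vector $1\otimes 1$, and for that vector the identity is in fact false at finite $t$. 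Already for a quadratic invariant $C$, computing $\sum_i\nabla_{\mu_i}\nabla_{\lambda_i}(1\otimes 1)$ from (\ref{connection}) and (\ref{action-formula}) produces, for \emph{every} positive root $\alpha$ (not only the simple ones), terms proportional to $t\,\frac{q^{\alpha^{\vee}}}{1-q^{\alpha^{\vee}}}\otimes(1-s_{\alpha})\lambda_i$, and $(1-s_{\alpha})\lambda_i$ is a nonzero element of $\calM_t$ with nonconstant polynomial part; these contributions have first-order poles and cannot be absorbed by the Calogero--Moser potential $-t(t-1)\sum_{\alpha}(\alpha^{\vee},\alpha^{\vee})\,(q^{\alpha^{\vee}/2}-q^{-\alpha^{\vee}/2})^{-2}$, so $\eta_{CM,t}(C)\cdot(1\otimes 1)\neq 1\otimes C$. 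What makes the Toda case work is precisely that $\os_{\alpha}$ kills constants and kills linear forms unless $\alpha$ is simple, in which case $\os_{\alpha}(\lambda)=(\lambda,\alpha^{\vee})$ is a constant reproducing the Toda potential---this is the direct computation the paper performs to establish (\ref{desire}). Moreover, the paper explicitly flags your route as the ``tempting'' one and rejects it: it is not a priori clear that the limiting procedure of Proposition \ref{prop-limit} is compatible with the limit (\ref{etingof}), and your closing paragraph (``check both sides admit a regular limit and match orders'') is exactly this unresolved compatibility, not a routine verification.

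Two further points. The map should be defined as $\calT\to\calO(T^{\vee})\otimes\ocM$, $d\mapsto d\cdot v_0$, which is automatically well defined since $\calT=\calD(T^{\vee})$ is free over itself; your map $v_0\mapsto 1$ in the opposite direction requires knowing beforehand that the annihilator of $v_0$ in $\calD(T^{\vee})$ vanishes, which your degree induction (cyclicity) does not provide, and your injectivity check at generic $\xi$ needs an extra torsion-freeness argument to come back from generic fibers to the family. The paper settles surjectivity and injectivity simultaneously by a filtration argument, comparing the order filtration on $\calD(T^{\vee})$ with the polynomial degree on $\Sym(\grt^*)$ via $\onabla_{\lambda}(1\otimes f)\equiv -1\otimes\lambda f$ modulo lower degree. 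Finally, note the reduction that makes the direct verification finite: transporting the $\Sym(\grt^*)^W$-action through this vector-space isomorphism gives a homomorphism $\eta'\colon\Sym(\grt^*)^W\to\calD(T^{\vee})$ that automatically has the correct symbols (property T1), so one only needs the key identity for a single quadratic element (property T2); if you repair your argument, you should exploit this rather than aim at all $f$ at once.
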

\noindent
{\bf Remark.} It is tempting to say that Theorem \ref{cher-matsuo-toda} follows from Theorem
\ref{cher-matsuo} by taking $t\to\infty$ limit and using Proposition \ref{prop-limit}
together with (\ref{etingof}). However, {\em a priori} it is not clear why the limiting procedure
in Proposition \ref{prop-limit} is the same as in (\ref{etingof}). Hence we are going to give
an independent proof of Theorem \ref{cher-matsuo-toda}.

\begin{proof}
Let us define a map $\iota:\calT=\calD(T^{\vee})\to \calO(T^{\vee})\otimes \ocM=\calO(T^{\vee})\otimes
\Sym(\grt^*)$ which send every $d\in\calD(T^{\vee})$ to $d(1\otimes 1)$ (let us recall that the action
of $\calD(T^{\vee})$ on $\calO(T^{\vee})\otimes \Sym(\grt^*)^W$ is given by $\onabla$).
\begin{lem}
The map $\iota$ is an isomorphism of vector spaces.
\end{lem}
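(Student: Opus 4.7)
The plan is to exhibit compatible PBW-type filtrations on source and target, verify that $\iota$ respects them, and show the induced map on associated gradeds is a scalar multiple of the identity --- from which $\iota$ itself is a bijection.

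On the source $\calD(T^\vee)$ I would use the standard filtration $F_n$ by order of differential operator, so that $\mathrm{gr}_n \calD(T^\vee) = \calO(T^\vee)\otimes \Sym^n(\grt^*)$ after identifying $\grt^*$ with constant vector fields on $T^\vee$. On the target $\calO(T^\vee)\otimes\ocM$, I would set $G_n := \calO(T^\vee)\otimes \Sym^{\leq n}(\grt^*)$, so that $\mathrm{gr}_n$ is again $\calO(T^\vee)\otimes\Sym^n(\grt^*)$. The essential input is the degree behaviour of the three pieces of
\[
\onabla_\lam \;=\; d_\lam \,-\, x_\lam \,-\, \sum_{\alp\in R_+'}(\lam,\alp^{\vee})\, q^{\alp^{\vee}}\,\os_\alp,
\]
namely: $d_\lam$ preserves $\Sym(\grt^*)$-degree, $x_\lam$ raises it by one, and each divided difference $\os_\alp(g)=(g-g^{s_\alp})/\alp$ strictly lowers it.

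Given this, $\iota(F_n)\subseteq G_n$ follows by induction on $n$: $\calD(T^\vee)$ is generated as an $\calO(T^\vee)$-algebra by the $d_\lam$'s, and $\iota$ is $\calO(T^\vee)$-linear on the left because $f\in\calO(T^\vee)$ acts on the target by multiplication in the first tensor factor. Iterating $\onabla_\lam$ on $1\otimes 1$ and retaining only top-degree terms, only the repeated application of $-x_{\lam_i}$ contributes at the highest order, yielding
\[
\iota(f\cdot d_{\lam_1}\cdots d_{\lam_n}) \;\equiv\; (-1)^n\, f\otimes \lam_1\cdots\lam_n \pmod{G_{n-1}}.
\]
Hence $\mathrm{gr}_n \iota$ is just multiplication by $(-1)^n$ on $\calO(T^\vee)\otimes\Sym^n(\grt^*)$, an isomorphism in each degree. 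The standard filtered-to-graded argument then promotes this to bijectivity of $\iota$.

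I do not expect a serious obstacle in carrying this out: the lemma is essentially a PBW-type statement, and the one genuinely substantive observation is that the nil-Hecke divided differences strictly decrease polynomial degree on $\ocM=\Sym(\grt^*)$. Note that the argument is intrinsic to $\onabla$ and does not invoke Theorem \ref{cher-matsuo} through the $t\to\infty$ limit --- in accordance with the remark preceding the lemma that an independent proof is needed.
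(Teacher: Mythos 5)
Your argument is correct and is essentially the paper's own proof, only written out in more detail: the paper likewise filters $\calD(T^{\vee})$ by order of differential operators and $\calO(T^{\vee})\otimes\Sym(\grt^*)$ by polynomial degree, observes that $\iota$ is filtration-compatible, and concludes from the induced isomorphism on associated graded pieces. One cosmetic slip: the formula $\os_\alp(g)=(g-g^{s_\alp})/\alp$ is only valid for simple $\alp$ (for general $\alp\in R_+'$ the operator $\os_\alp=\overline{s_\alp}$ is a composition of $\ell(s_\alp)$ divided differences), but the only property you use --- that it strictly lowers $\Sym(\grt^*)$-degree --- still holds, so the proof stands.
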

\begin{proof}
The module $\calT=\calD(T^{\vee})$ is endowed with a natural filtration by order of a differential operator.
Similarly, $\calO(T^{\vee})\otimes \Sym(\grt^*)$ is endowed with a filtration coming from
the natural filtration on $\Sym(\grt^*)$ (by degree of a polynomial). From the definition of $\iota$ it is clear,
that it is compatible with the above filtration it defines between the associated graded spaces.
This shows that $\iota$ is an isomorphism itself.

\end{proof}
On the other hand, it is obvious that $\iota$ is a morphism of $\calD(T^{\vee})$-modules. Hence,
in order to prove Theorem \ref{cher-matsuo-toda} it is enough to verify that it is a
morphism of $\Sym(\grt^*)^W$-modules. We claim that it is enough to check this for quadratic elements of
$\Sym(\grt^*)^W$. Indeed, applying $\iota^{-1}$ to the action of $\Sym(\grt^*)^W$ on
$\calO(T^{\vee})\otimes \Sym(\grt^*)$ we get an action of $\Sym(\grt^*)^W$ on $\calT=\calD(T^{\vee})$;
this action commutes with the left $\calD(T^{\vee})$-action and thus comes from a homomorphism
$\eta':\Sym(\grt^*)^W\to \calD(T^{\vee})$. We need to show that $\eta'=\eta_T$.
However, $\eta'$ obviously satisfies property T1.
Thus in order to show property the equality $\eta'=\eta_T$ we need to check that $\eta'$ satisfies T2, which
exactly means that $\iota$ commutes with quadratic elements in $\Sym(\grt^*)^W$.

Thus we are reduced to checking that for any $C$ as in T2 we have
\begin{equation}\label{desire}
\eta(C)(1\otimes 1)=1\otimes C.
\end{equation}
Let $\lam_1,\cdots,\lam_{\ell}$ be a basis of $\grt^*$ and let $\mu_1,\cdots,\mu_{\ell}$ be the dual
basis with respect to the quadratic form corresponding to $C$. Then
$\onabla_{\lam_i}(1\otimes 1)=-1\otimes \lam_i$. Thus
$$
\begin{aligned}
&\Del(1\otimes 1)=\sum\limits_{i=1}^{\ell}\onabla_{\mu_i}\onabla_{\lam_i}(1\otimes 1)=
-\sum\limits_{i=1}^{\ell} \onabla_{\mu_i}(1\otimes \lam_i)=
\sum\limits_{i=1}^{\ell}
(1\otimes \mu_i\lam_i+\\
&\sum\limits_{\alp\in R_+'}(\mu_i,\alp^{\vee})q^{\alp^{\vee}}(1\otimes \os_{\alp}(\lam_i)))=
1\otimes C+\sum\limits_{\alp\in R_+'}q^{\alp^{\vee}}
\sum\limits_{i=1}^{\ell}(\mu_i,\alp^{\vee})(1\otimes \os_{\alp}(\lam_i))
\end{aligned}
$$
Note that $\os_{\alp}(\lam_i)=0$ if $\alp$ is not a simple root; for $\alp\in\Pi$ we have
$\os_{\alp}(\lam_i)=(\lam_i,\alp^{\vee})$.
Thus
$$
\sum\limits_{\alp\in R_+'}\sum\limits_{i=1}^{\ell}(\mu_i,\alp^{\vee})
(q^{\alp^{\vee}}\otimes \os_{\alp}(\lam_i))=
\sum\limits_{\alp\in \Pi}q^{\alp^{\vee}}\sum\limits_{i=1}^{\ell}(\mu_i,\alp^{\vee})(\lam_i,\alp^{\vee})=
\sum\limits_{\alp\in\Pi}(\alp^{\vee},\alp^{\vee})q^{\alp^{\vee}}(1\otimes 1),
$$
which implies (\ref{desire}).
\end{proof}

\section{The limit $t\to \infty$: geometry}\label{limit-geometry}

In this section we explain how
we can recover the quantum cohomology of $\BB$ by studying the $t \rightarrow \infty$ limit of the quantum cohomology
of $X=T^*\BB$.
Using the projection $\pi:X \rightarrow \BB$, we can identify
$$
H_{\TT}^{*}(X, \CC) = H_{T}^{*}(\BB,\CC)[t],
$$
and view the divisor operators $D_{\lambda}^{X}$ as acting on the latter space:
$$
D_{\lambda}^{X}(t, q) \in \mathrm{End}(H_T^{*}(\BB,\CC))[t][[q^{\beta}]].
$$
Let
$D_{\lambda}^{X}(t, t^{-c_{1}}q)$ be the operators obtained by the substitution
$$q^{\beta} \mapsto t^{-c_{1}(\BB)\cdot\beta}q^{\beta}.$$
We want to compare these operators with the divisor operators $D_{\lambda}^{\BB}(q)$ arising from the quantum cohomology of $\BB$; since the $\TT$-action on $\BB$ factors through $T$, there is no $t$-dependence here.

\begin{prop}\label{prop-limit-geometry}
$$
D_{\lambda}^{\BB}(q) = \lim\limits_{t\rightarrow \infty} D_{\lambda}^{X}(t, t^{-c_{1}}q).
$$
\end{prop}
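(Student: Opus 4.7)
The plan is to combine the explicit formula of Theorem \ref{formula} with the algebraic limit procedure already worked out in Proposition \ref{prop-limit} and match the outcome with the formula for $D_\lam^\BB\circ$ recorded in Theorem \ref{formula-flags}. The only geometric input required is $c_1(\BB)=2\rho\in H^2(\BB,\ZZ)$, so the substitution $q^\beta\mapsto t^{-c_1\cdot\beta}q^\beta$ appearing in the proposition is literally the rescaling $q\mapsto t^{-2\rho}q$ used in the algebraic limit. Under the identification $H^*_\bG(T^*\BB)\cong\calM_t$ of Proposition \ref{geometry-algebra}, the $t\to\infty$ limit of the gauged $\calM_t$ is $\ocM$, which is exactly $H^*_G(\BB)=\Sym(\grt^*)$ with its natural $\calH_{nil}$-module structure, so the limit operator naturally lives on the right space.

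Explicitly, applying Theorem \ref{formula} and substituting $q^{\alp^\vee}\mapsto t^{-(2\rho,\alp^\vee)}q^{\alp^\vee}$ yields
\[
D_\lam^X(t, t^{-c_1}q)\circ = x_\lam + \sum_{\alp^\vee\in R_+^\vee}(\lam,\alp^\vee)\,\frac{t^{1-(2\rho,\alp^\vee)}q^{\alp^\vee}}{1-t^{-(2\rho,\alp^\vee)}q^{\alp^\vee}}(s_\alp-1).
\]
Writing $s_\alp=t^{\ell(s_\alp)}\tils_\alp$ with $\tils_\alp\to\os_\alp$ as $t\to\infty$ (section \ref{hnil}), the leading $t$-power of the $\alp$-term is $t^{1+\ell(s_\alp)-(2\rho,\alp^\vee)}$. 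The length inequality established in the proof of Proposition \ref{prop-limit}, namely $\ell(s_\alp)\le(2\rho,\alp^\vee)-1$ with equality iff $\alp\in R_+'$, forces the terms with $\alp\notin R_+'$ to vanish in the limit and makes the surviving terms (for $\alp\in R_+'$) equal to $(\lam,\alp^\vee)q^{\alp^\vee}\os_\alp$. Higher powers of $q^{\alp^\vee}$ in the geometric series for $1/(1-t^{-(2\rho,\alp^\vee)}q^{\alp^\vee})$ contribute at strictly lower order in $t$, as does the constant $-1$ in $(s_\alp-1)$, since both are multiplied by additional negative powers of $t$.

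Summing and comparing with Theorem \ref{formula-flags} gives $D_\lam^\BB\circ$ and completes the argument. The only delicate step is the $t$-order bookkeeping in the geometric series, but this is entirely controlled by the length lemma recalled above. A conceptually cleaner, purely geometric route --- one that does not invoke the externally-proved Theorem \ref{formula-flags} --- would apply virtual localization on $\Mbar_{0,n}(T^*\BB,\beta)$ with respect to the $\CC^*$-scaling of cotangent fibers: the fixed locus is $\Mbar_{0,n}(\BB,\pi_*\beta)$ inside the zero section, and the $t\to\infty$ limit of the $\CC^*$-equivariant reduced virtual contribution isolates this locus. In this picture the substitution $q^\beta\mapsto t^{-c_1(\BB)\cdot\beta}q^\beta$ is the combinatorial shadow of the weights in the Euler class of the virtual normal bundle to the zero section, which is where I would expect the real technical obstacle to lie were one to pursue this more intrinsic derivation.
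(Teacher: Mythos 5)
Your argument is correct but runs in the opposite direction from the paper. You take Theorem~\ref{formula}, perform the substitution, read off the $t\to\infty$ limit via the length inequality $\ell(s_\alpha)\le(2\rho,\alpha^\vee)-1$ (your $t$-power bookkeeping is exactly right), and match the result against Theorem~\ref{formula-flags}, which you import from \cite{mih}. The paper instead proves Proposition~\ref{prop-limit-geometry} by a self-contained geometric argument that does not know the target formula in advance: one inserts the zero-section class $[\BB]$ into the three-point invariants of $X$, observes (using nefness of $T\BB$) that the constrained moduli $\Mbar_{0,3}(X,p_1,\beta)$ coincides as a stack with $\Mbar_{0,3}(\BB,\beta)$, and compares their obstruction theories; these differ by a $\bT$-equivariant bundle $E=R^1\pi_*\calK$ of rank $c_1(\BB)\cdot\beta$, so $[\Mbar_{0,3}(X,p_1,\beta)]^{\mathrm{vir}}=c_t(E)\cap[\Mbar_{0,3}(\BB,\beta)]^{\mathrm{vir}}$, and weighting by $t^{-c_1(\BB)\cdot\beta}$ isolates the leading monomial $t^{\mathrm{rk}(E)}$ of the equivariant Euler class $c_t(E)$ in the limit. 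In the paper's logic this proposition, combined with Propositions~\ref{prop-limit} and Theorem~\ref{cher-matsuo-toda}, then \emph{re-proves} Theorem~\ref{usual flags}; your argument cannot serve that purpose since it assumes the answer. Your sketched geometric alternative (virtual localization for the scaling $\CC^*$) is morally right but not quite what the paper does: the paper carries out a Behrend--Fantechi comparison of perfect obstruction theories on a single moduli space, and the zero-section insertion $[\BB]$ is precisely what trades the localization picture for that comparison. One small point in your favor: you take $c_1(\BB)=2\rho$, which is what the argument needs both for the two $q$-substitutions to match and for $E$ to have nonnegative rank; the paper's parenthetical ``$c_1(\calB)=-2\rho$'' at the end of Section~\ref{limit-geometry} appears to be a sign slip.
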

\begin{proof}
Matrix elements of $D_{\lambda}^{X}$ are obtained from invariants of the form
$$\langle \gamma_{1}\cdot [\BB], D_{\lambda}, \gamma_{2}\rangle^{X}_{0,3,\beta},$$
where $\gamma_{1},\gamma_{2}\in H_{T}^{*}(\BB,\CC)$ and
$[\BB]$ denotes the class of the zero-section.
Define the moduli space of maps
$$\Mbar_{0,3}(X,p_1,\beta) = \{(C,f) \in \Mbar_{0,3}(X,\beta)| f(p_{1}) \in \BB\subset X\}.$$
It is a standard fact that $\Mbar_{0,3}(X,p,\beta)$ admits a $\bT$-equivariant virtual class for which
$$\iota_{*}[\Mbar_{0,3}(X,p_1,\beta)]^{\mathrm{vir}} = \mathrm{ev}_{1}^{*}[\BB] \cdot [\Mbar_{0,3}(X,\beta)]^{\mathrm{vir}}.$$

Moreover, since $T\BB$ is nef, $f(p_1) \in \BB$ implies that $f$ factors through $\BB$, and we have
$$\Mbar_{0,3}(X,p_1,\beta) = \Mbar_{0,3}(\BB,\beta)$$
as stacks, equipped with two different obstruction theories.

Consider the universal family
$$
\xymatrix{
\mathcal{C} \ar[r]^{f}\ar[d]^{\pi} & \BB \\
\Mbar_{0,3}(\BB,\beta) \ar@/^/[u]^{\sigma_{1}}
}
$$
where $\sigma_1$ is the section associated to the first marked point, with image $\Sigma_1 \subset \mathcal{C}$.
We can define a sheaf $\mathcal{K}$ on $\mathcal{C}$ by the short exact sequence
$$0 \rightarrow \mathcal{K} \rightarrow f^{*}T^{*}\BB \rightarrow f^{*}T^{*}\BB|_{\Sigma_1}\rightarrow 0,$$
where the second map is the restriction map.  It is easy to see that $R^{0}\pi_{*}\mathcal{K}=0$
and that
$$E = R^1\pi_{*}\mathcal{K}$$
is a $\bT$-equivariant vector bundle on
$\Mbar_{0,3}(\BB,\beta)$
with rank $c_{1}(\BB)\cdot\beta$.

The relative obstruction theories are then related by the triangle
$$E^{\vee}[1] \rightarrow E_{\Mbar(X,p_1,\beta)/\mathfrak{M}_{0,3}} \rightarrow E_{\Mbar(\BB,\beta)/\mathfrak{M}_{0,3}},$$
and we have
$$ [\Mbar_{0,3}(X,p_1,\beta)]^{\mathrm{vir}} = c_{t}(E)\cap [\Mbar_{0,3}(\BB,\beta)]^{\mathrm{vir}}$$
where
$$c_{t}(E) = t^{\mathrm{rk}(E)} + c_{1}(E)t^{\mathrm{rk}(E)-1} + \dots + c_{\mathrm{rk}(E)}(E)$$
is the $\bT$-equivariant Euler class.
This yields
$$\langle \gamma_{1}\cdot [\BB], D_{\lambda}, \gamma_{2}\rangle^{X}_{0,3,\beta}
= \langle c_{t}(E), \gamma_{1}, D_{\lambda}, \gamma_{2}\rangle^{\BB}_{0,3,\beta}.$$
If we want to recover the term corresponding to the operator $D_{\lambda}^{\BB}$, we can weight this invariant by $t^{-\mathrm{rk}(E)}= t^{-c_{1}(\BB)\cdot\beta}$ and take the limit as $t \rightarrow\infty$.
This same argument works for all multiplication operators.
\end{proof}
Let us now recall that $c_1(\calB)=-2\rho$. Combining this with Proposition \ref{prop-limit-geometry}, Proposition \ref{prop-limit},
and Theorem \ref{cher-matsuo-toda}
we obtain a new proof of the following result:
\begin{thm}\label{usual flags}
\begin{enumerate}
\item
The operator of quantum multiplication by $D_{\lam}$ in $H^*_G(\calB)$ is equal to
$$
x_{\lam}+\sum\limits_{\alp\in R_+'}(\lam,\alp^{\vee})q^{\alp^{\vee}}s_{\alp}.
$$
Here $x_{\lam}$ is the classical multiplication by $D_{\lam}$.
\item
The $G$-equivariant quantum connection of $\calB$ is given by Equation \ref{onabla}.
\item
The $G$-equivariant quantum $D$-module of $\calB$ is isomorphic to the Toda $\calD$-module
$\calT$ (as a $\calD(T^{\vee})\otimes\Sym(\grt^*)^W$-module).
\end{enumerate}
\end{thm}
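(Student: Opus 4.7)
The plan is to deduce Theorem \ref{usual flags} as a formal consequence of three results already established in the paper: the $G$-equivariant geometric limit theorem (Proposition \ref{prop-limit-geometry}), the algebraic limit of the affine KZ connection (Proposition \ref{prop-limit}), and the Cherednik--Matsuo theorem for the Toda system (Theorem \ref{cher-matsuo-toda}). No further Gromov--Witten computation on $\calB$ itself is needed; the theorem is obtained by transporting the main result on $T^*\calB$ through the $t\to\infty$ degeneration and then invoking the Toda analogue of Cherednik--Matsuo.

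First I would record the numerical identity $c_1(\calB)=-2\rho$ (in the paper's conventions), so that the substitution $q^\beta\mapsto t^{-c_1(\calB)\cdot\beta}q^\beta$ appearing in Proposition \ref{prop-limit-geometry} coincides precisely with the change of variables $q\mapsto t^{-2\rho}q$ used inside Proposition \ref{prop-limit}. Plugging Theorem \ref{formula} into the right-hand side of Proposition \ref{prop-limit-geometry} then matches, on the nose, the conjugated connection $t^{2\rho}\nabla_\lambda t^{-2\rho}$ whose limit is computed in Proposition \ref{prop-limit}. The latter proposition yields
$$\lim_{t\to\infty} D_\lambda^X(t,\,t^{-c_1}q) \;=\; x_\lambda + \sum_{\alpha\in R_+'}(\lambda,\alpha^\vee)\,q^{\alpha^\vee}\,\os_\alpha,$$
where the length inequality $\ell(s_\alpha)\le (2\rho,\alpha^\vee)-1$---which becomes an equality exactly for $\alpha\in R_+'$---selects the surviving summands, and the rescaled reflections $\tils_\alpha$ degenerate to the nil-Hecke elements $\os_\alpha$. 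This establishes part (1), and part (2) is just its immediate rewriting as a connection on $H^*_G(\calB)$ in terms of $\onabla$ acting on the module $\ocM\cong H^*_G(\calB)$ provided by Proposition \ref{geometry-algebra}.

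For part (3), once part (2) is in hand, the $G$-equivariant quantum connection of $\calB$ is the Toda connection $\onabla$, and Theorem \ref{cher-matsuo-toda} identifies the corresponding $\calD$-module with the Toda $\calD$-module $\calT$ as a $\calD(T^\vee)\otimes \Sym(\grt^*)^W$-module. The only real subtlety in the argument is bookkeeping: one must verify that the geometric rescaling of $q$ in Proposition \ref{prop-limit-geometry} and the algebraic rescaling in Proposition \ref{prop-limit} are indeed matched by the identity $c_1(\calB)=-2\rho$, including carefully tracking that the classical multiplication piece $x_\lambda$ is not rescaled while the quantum corrections are. Once this matching is checked, all three parts follow without any further geometric input, and no substantive obstacle remains beyond the deep results already in place.
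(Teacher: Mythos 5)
Your proposal is exactly the paper's argument: the paper derives Theorem \ref{usual flags} in one line, stating that it follows by combining $c_1(\calB)=-2\rho$ with Proposition \ref{prop-limit-geometry}, Proposition \ref{prop-limit}, and Theorem \ref{cher-matsuo-toda}, which is precisely the chain of reductions you spell out. No substantive difference.
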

These statements are already well-known in the literature
The first follows from Theorem 6.4 in \cite{mih}.
The second statement is of course just a reformulation of the first, and the third statement follows from the main result of \cite{kim}, although via a different argument.
%-------------------------------------------------------------------------------------------------

\section{Slodowy slices}\label{slodowy}

In this section, we prove Theorem \ref{formula-slodowy}.

Given $n \in \calN$, we can associate an $\mathfrak{sl}_2$-triple $(e=n, h, f)$ and consider the affine subspace $\mathfrak{v}_{n} = \{ n + \mathrm{Ker}\mathrm{ad} f \} \subset \grt$.  The Slodowy slice resolution $\tcS_n$ is a local complete intersection subvariety of $X$ defined by the Cartesian diagram

%first diagram here
\begin{equation}\label{slodowy1}
\xymatrix{
\tcS_n \ar@{^{(}->}[r]^{\iota}\ar[d]^{g'} & X \ar[d]^{g} \\
\mathfrak{v}_n  \ar@{^{(}->}[r]^{\iota}& \grt\\
}
\end{equation}

Let $\TT' \subset \TT$ be the subtorus which preserves $\tcS_{n}$.
Assume the natural map $H_2(\tcS_n,\ZZ)\rightarrow H_2(X,\ZZ)$ is an isomorphism.
Given $\beta \in H_2(\tcS_n,\ZZ)$, since $\mathfrak{v}_n$ and $\grt$ are affine, the moduli space of stable maps $\Mbar_{0,2}(\tcS_n,\beta)$ fits in the Cartesian diagram

% second diagram
\begin{equation*}
\xymatrix{
\Mbar_{0,2}(\tcS_n,\beta)\ar[r]^{\iota}\ar[d]^{h} & \Mbar_{0,2}(X,\beta) \ar[d]^{h} \\
\mathfrak{v}_n  \ar@{^{(}->}[r]^{\iota}& \grt\\
}
\end{equation*}

In order to prove Theorem \ref{formula-slodowy}, we will show the following.

\begin{prop}  There is an equality of $\TT'$-equivariant virtual classes
$$\iota^{!}[\Mbar_{0,2}(\tcS_n,\beta)]^{\mathrm{vir}} = [\Mbar_{0,2}(X,\beta)]^{\mathrm{vir}}$$
\end{prop}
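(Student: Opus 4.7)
The plan is to prove this as an instance of the compatibility of virtual fundamental classes under the Cartesian diagram displayed in the statement, whose lower edge $\iota:\mathfrak{v}_n\hookrightarrow\grt$ is a regular embedding. The comparison is implemented by the refined Gysin homomorphism $\iota^!$ via Manolache's virtual pullback formula (equivalently, Proposition~7.2 of Behrend--Fantechi), and the stated identity records its action on the two virtual classes across the Cartesian square.

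First I would lift the Cartesian square to the moduli of stable maps, checking that
\[
\xymatrix{
\Mbar_{0,2}(\tcS_n,\beta)\ar[r]^{\tilde\iota}\ar[d]_{h'} & \Mbar_{0,2}(X,\beta)\ar[d]^{h}\\
\mathfrak{v}_n\ar@{^{(}->}[r]^{\iota}&\grt
}
\]
is Cartesian on the nose. This holds because $\grt$ is affine, so any composition $g\circ f:C\to\grt$ is constant on a connected prestable curve; hence a stable map $f:C\to X$ factors through $\tcS_n$ precisely when its (constant) image under $h$ lies in $\mathfrak{v}_n$, giving $\Mbar_{0,2}(\tcS_n,\beta)=h^{-1}(\mathfrak{v}_n)$ scheme-theoretically. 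Next I would equip $h$ and $h'$ with the standard relative perfect obstruction theories $E^\bullet_h=(R\pi_\ast\ev^\ast T_{X/\grt})^\vee$ and $E^\bullet_{h'}=(R\pi_\ast\ev^\ast T_{\tcS_n/\mathfrak{v}_n})^\vee$; the regular embedding hypothesis yields $T_{\tcS_n/\mathfrak{v}_n}=\tilde\iota^\ast T_{X/\grt}$, so the two relative theories are Cartesian-compatible along $\iota$.

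With these pieces in hand I would invoke the virtual pullback theorem: compatible relative obstruction theories across a Cartesian square with a regular embedding on the base produce the identity intertwining the two relative virtual classes via $\iota^!$, which is precisely the content of the stated equality. The $\TT'$-equivariant refinement holds verbatim because each step respects the action of the subtorus preserving $\tcS_n$. The main obstacle is reconciling the \emph{absolute} virtual classes appearing in the statement with the \emph{relative} ones over the smooth bases $\grt$ and $\mathfrak{v}_n$: since both bases are smooth and $\iota$ is regular, the absolute classes differ from the relative ones by the fundamental classes of the bases, and this identification is compatible with $\iota^!$ via the functoriality of the conormal exact sequence attached to $\iota$. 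Once this bookkeeping is checked, no further geometric input is required — the argument uses only that $\tcS_n$ is the base change of $X$ along the regular embedding $\iota$.
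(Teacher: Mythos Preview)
Your strategy is essentially the paper's: both arguments verify the Behrend--Fantechi compatibility condition across the Cartesian square over the regular embedding $\iota$, so that $\iota^!$ transports one virtual class to the other. The paper works with the absolute obstruction theories $(R\pi_\ast\ev^\ast T_X)^\vee$ and $(R\pi_\ast\ev^\ast T_{\tcS_n})^\vee$ and builds the required morphism of triangles directly from the normal-bundle sequence $0\to T_{\tcS_n}\to\iota^\ast T_X\to g'^\ast N\to 0$, invoking their Theorem~5.10; you instead package the same data as obstruction theories relative to the affine bases and appeal to virtual pullback. These are equivalent formulations.

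There is, however, one substantive step you have suppressed. The complex $(R\pi_\ast\ev^\ast T_{X/\grt})^\vee$ is \emph{not} automatically perfect of amplitude $[-1,0]$: since $g$ is not smooth, $T_{X/\grt}$ is a two-term complex in degrees $[0,1]$, so $R\pi_\ast\ev^\ast T_{X/\grt}$ sits a priori in degrees $[0,2]$. Killing the degree-$2$ piece amounts to $R^1\pi_\ast(\ev^\ast g^\ast T_{\grt})=0$, which holds because the composite $C\to\grt$ is constant (the target is affine) and $H^1(C,\calO_C)=0$ for a genus-$0$ curve. The paper makes exactly this check explicit, writing $R\pi_\ast(\ev^\ast g^\ast N_{\mathfrak{v}_n/\grt})^\vee[1]=h^\ast L_{\mathfrak{v}_n/\grt}$ ``since the genus of $C$ is $0$.'' Without it your relative theory fails to be a perfect obstruction theory and the pullback machinery does not apply. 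A smaller point: the identification $T_{\tcS_n/\mathfrak{v}_n}=\tilde\iota^\ast T_{X/\grt}$ does not follow from $\iota$ being regular alone; you need the square to be Tor-independent, i.e.\ that $\tcS_n\hookrightarrow X$ is itself a regular embedding of the expected codimension. This is precisely the ``local complete intersection'' property of the resolved Slodowy slice that the paper records just before the proposition, and it is what makes the normal-bundle sequence exact on the right.
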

\begin{proof}
As in section \ref{reducedclass}, we will prove the analogous statement for the moduli space
$M_C(\tcS_n,\beta)$ of maps from a fixed genus $0$ nodal curve $C$; the full statement will then follow by applying this statement relative to the moduli stack of nodal curves.

In order to prove the pullback of virtual classes, it is sufficient to prove that the obstruction theories on
$M_C(\tcS_n,\beta)$ and $M_C(X,\beta)$ are compatible in the sense of Theorem $5.10$ of
\cite{behrend-fantechi}.  To show this, we need to construct a morphism of exact triangles
\begin{equation*}
\xymatrix{
\iota^*E_{M_C(X,\beta)} \ar[r]^{\phi}\ar[d] & E_{M_C(\tcS_{n},\beta)} \ar[r]^{\psi} \ar[d] & h^{*}L_{\mathfrak{v}_{n}/\grt}\ar[r]^{\chi}\ar[d] &\iota^*E_{M_C(X,\beta)}[1]\ar[d]
\\
L_{M_C(X,\beta)} \ar[r] & L_{M_C(\tcS_{n},\beta)} \ar[r] & L_{M_C(\tcS_{n},\beta)/M_C(X,\beta)}\ar[r]
&L_{M_C(X,\beta)}[1]\\
}
\end{equation*}
Here, the bottom row is the standard triangle associated to the cotangent complex of a morphism, and the vertical maps are the data of an obstruction theory.

The top row will be induced by the short exact sequence
\begin{equation}\label{normalsequence}
0 \rightarrow T_{\tcS_{n}} \rightarrow \iota^{*}T_{X} \rightarrow g^{*}N_{\mathfrak{v}_{n}/\grt}  \rightarrow 0.
\end{equation}
To see this, recall that, in the notation of section \ref{reducedclass}, we have
$$E_{M_C(V,\beta)} = R\pi_{\ast}(\mathrm{ev}^{\ast}T_{V})^{\vee}$$
for $V = X$ and  $\tcS_{n}$.  So if we apply
$R\pi_{\ast}(\mathrm{ev}^{\ast}( - )^{\vee}$ to the sequence (\ref{normalsequence})
we get the triangle
$$
\iota^*E_{M_C(X,\beta)} \rightarrow E_{M_C(\tcS_{n},\beta)} \rightarrow
R\pi_{\ast}(\mathrm{ev}^{\ast}g^*N_{\mathfrak{v}_{n}/\grt})^{\vee}[1]
\rightarrow \iota^*E_{M_C(X,\beta)}[1].$$
Since the genus of $C$ is $0$, we have
$R\pi_{\ast}(\mathrm{ev}^{\ast}g^*N_{\mathfrak{v}_{n}/\grt})^{\vee}[1] = h^{*}L_{\mathfrak{v}_{n}/\grt},$
and the maps obviously commute with the vertical maps associated to each obstruction theory.

Since the obstruction theories associated to $X$ and $\tcS_{n}$ are compatible, Theorem $5.10$ in \cite{behrend-fantechi} then immediately gives an isomorphism of virtual classes after pullback.
\end{proof}

We use this proposition to give a short proof of Theorem \ref{formula-slodowy}.
\begin{proof}
As in the proof of Theorem \ref{formula}, the identification of classical multiplication by $D_\lambda$ follows from the definition of the Hecke algebra action on $H_{\TT'}^*(\tcS_n)$.  For the nonclassical terms, it suffices to calculate the two-point Gromov-Witten invariants of $\tcS_n$.

Consider the Cartesian diagram induced by the evaluation maps to the Steinberg variety:
\begin{equation*}
\xymatrix{
\Mbar_{0,2}(\tcS_n,\beta)\ar[r]^{\iota}\ar[d]^{\phi} & \Mbar_{0,2}(X,\beta) \ar[d]^{\phi} \\
\tcS_n \times_{\calS_n}  \tcS_n\ar[r]^{\iota}&X\times_{\calN} X\\
}
\end{equation*}

 The action of the Hecke algebra $\calH_t$ on
$H_{\TT'}^*(\tcS_n)$ is induced by the pullback map
 $$\iota^{*}: H_{\TT'}^*(X \times_{\calN} X) \rightarrow H_{\TT'}^*(\tcS_n \times_{\calS_n} \tcS_n).$$  Since Theorem \ref{formula} identifies the action of the operator given by $\phi_* [\Mbar_{0,2}(X,\beta)]^{\mathrm{vir}}$,
 the corresponding result for $\tcS_n$ follows from the equality
$$\phi_{*} [\Mbar_{0,2}(\tcS_{n},\beta)]^{\mathrm{vir}}
= \phi_* \iota^{!} [\Mbar_{0,2}(X,\beta)]^{\mathrm{vir}}
 =\iota^{*}(\phi_{*} [\Mbar_{0,2}(X,\beta)]^{\mathrm{vir}}).$$
\end{proof}

\section{Appendix: second cohomology of Springer fibers}
In this appendix we would like to prove the following theorem:
\begin{thm}\label{simply-laced}
Let $G$ be a simply laced semi-simple group and let as before
$f:T^*\calB\to\calN$ be the Springer map. Then for any non-regular $n\in\calN$ the
restriction map $H^2(\calB,\ZZ)\to H^2(f^{-1}(n),\ZZ)$ is an isomorphism
(note that $f^{-1}(n)$ is embedded in $\calB$ by the definition of $f$).
\end{thm}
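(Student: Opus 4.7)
The plan is a two-step reduction: first from the fiber $\calB_n = f^{-1}(n)$ to the smooth slice resolution $\tcS_n$, and then from an arbitrary non-regular $n$ to the subregular case, where the Brieskorn--Slodowy theorem takes over.

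\emph{Step 1: Reduction to $\tcS_n$.} Pick a Jacobson--Morozov $\mathfrak{sl}_2$-triple through $n$. The associated contracting $\CC^*$-action on $\calS_n$ lifts to $\tcS_n$ with fixed locus $\calB_n$, and by Slodowy's theorem this realizes $\calB_n \hookrightarrow \tcS_n$ as a $\CC^*$-equivariant deformation retract. Hence $H^*(\calB_n,\ZZ) \cong H^*(\tcS_n,\ZZ)$ compatibly with restriction from $H^*(T^*\calB) = H^*(\calB)$, and the theorem reduces to showing that the restriction $H^2(T^*\calB,\ZZ) \to H^2(\tcS_n,\ZZ)$ is an isomorphism.

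\emph{Step 2: Brieskorn--Slodowy and the general case.} For $n$ subregular, the Brieskorn--Slodowy theorem identifies $\tcS_n$ with the minimal resolution of the Kleinian surface singularity $\calS_n \cap \calN$, which in the simply laced setting is precisely of the same ADE type as $G$. Thus $\calB_n$ is a tree of $\mathbb{P}^1$'s in Dynkin configuration indexed by the simple roots $\alp_i$, $H^2(\tcS_n,\ZZ) \cong \ZZ^{\rk(G)}$, and the restriction map becomes $c_1(\calO(\lam)) \mapsto (\langle \lam, \alp_i^\vee\rangle)_{i\in I}$---an isomorphism since $G$ is simply connected and the simple coroots form a $\ZZ$-basis of $\Lam^\vee$. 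For general non-regular $n$, every $\calO_n$ lies in $\overline{\calO_{\mathrm{sr}}}$, so using Kraft--Procesi's description of transverse slices between adjacent nilpotent orbits (which in simply laced type are Kleinian singularities or minimal orbit closures of smaller simply laced algebras), one embeds a subregular-type $\mathbb{P}^1$-configuration inside $\tcS_n$ whose classes in $H_2(\tcS_n,\ZZ)$ map to the simple coroots in $H_2(T^*\calB,\ZZ)$ and generate on both sides.

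The main obstacle will be the \emph{integral} statement in the general case. Rationally, Spaltenstein's affine paving of $\calB_n$, or the decomposition theorem applied to the Springer sheaf, readily gives $\dim_\QQ H^2(\calB_n,\QQ) = \rk(G)$ and supplies the generators; promoting this to an integral $\ZZ$-basis requires ruling out short/long-root twists that could introduce factors of $2$ or $3$, and checking that the weight-lattice vs.\ character-lattice distinction does not appear. The simply laced and simply connected hypotheses on $G$ are precisely what remove these obstructions, which is why the appendix imposes only these two conditions.
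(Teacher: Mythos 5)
Your overall plan matches the paper in shape (subregular case first, then extend to arbitrary non-regular $n$), and your Step~1 reduction to $\tcS_n$ via the $\CC^*$-contracting retract is a valid simplification that the paper does not bother with, since it argues about $\calB_n$ directly. The subregular case in Step~2 is essentially the paper's argument: a tree of $\PP^1$'s of degrees $\alpha_i^\vee$, so $H^2=\Lambda$.

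The gap is in your treatment of general non-regular $n$, which is the real content of the theorem. Two specific problems. First, the appeal to Kraft--Procesi is misplaced: Kraft--Procesi describes transverse slices between \emph{adjacent} nilpotent orbits inside the closure of the larger orbit (the so-called minimal degeneration singularities, which are indeed Kleinian surfaces or minimal orbit closures). But $\calS_n\cap\calN$ is the slice to $\calO_n$ in the \emph{whole} nilpotent cone, of dimension $\dim\calN-\dim\calO_n$, which is large in general; there is no straightforward way to ``embed a subregular-type $\PP^1$-configuration'' in $\tcS_n$ from that structure theory, and you would still have to show the resulting $\PP^1$'s land in the central fiber $\calB_n$ rather than in a nearby fiber. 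The paper produces these $\PP^1$'s by a concrete degeneration: take a curve $\eta:C\to\calN$ with $\eta(x)\in\calO_{sr}$ for $x\ne c$ and $\eta(c)=n$, use the unique $\PP^1$ of degree $\alpha^\vee$ in each $\calB_{\eta(x)}$ to get a map $C\setminus\{c\}\to\calP_\alpha$, and then extend across $c$ by properness of $\calP_\alpha$. This is the step your argument is missing.

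Second, you assert that the affine paving or the decomposition theorem ``readily gives'' $\dim_\QQ H^2(\calB_n,\QQ)=\rk(G)$. The paving alone does not: it tells you $H^*(\calB_n,\ZZ)$ is free and concentrated in even degree, but the number of $1$-cells is not obviously $\rk(G)$ (and indeed the corresponding statement can fail in non-simply-laced type, which is exactly why the theorem carries that hypothesis). The paper proves the rank statement via the Springer sheaf: only the $\IC$-summands supported on $\calO_r$ and $\calO_{sr}$ can contribute to $H^{2-\dim\calN}(\Spr_n)$ for dimension reasons, the $\calO_r$-summand is constant and contributes nothing, and in simply-laced type the only local system on $\calO_{sr}$ appearing in $\Spr$ is trivial with multiplicity space $\grt^*$. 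That identification of the subregular multiplicity space is where simply-lacedness enters, not in ``ruling out short/long-root twists'' as you suggest. With the rank computed over $\CC$, surjectivity of $H_2(\calB_n,\ZZ)\to H_2(\calB,\ZZ)$ from the degeneration argument, and torsion-freeness from De~Concini--Lusztig--Procesi, the integral statement follows; your outline names all three inputs but does not supply the first two.
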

Theorem \ref{simply-laced} implies that the restriction map
$H^2(T^*\calB,\ZZ)\to H^2(\tcS_n,\ZZ)$ is an isomorphism
as was announced in the remark after Theorem \ref{formula-slodowy}.
\begin{proof}
Without loss of generality we may assume that $G$ is almost simple. We shall also
write $\calB_n$ instead of $f^{-1}(n)$.
Let $\calO_r$ denote the regular orbit in $\calN$ (i.e. the unique open orbit) and let
$\calO_{sr}\subset \calN$ denote the subregular nilpotent orbit (i.e. the unique orbit which has
codimension 2 in $\calN$). Let us first assume that $n\in\calO_{sr}$. Then it is well known that
$f^{-1}(n)$ is a tree of $\PP^1$'s of degrees corresponding to all
$\alp^{\vee}\in\Pi^{\vee}\subset\Lam^{\vee}$;
this implies that $H_2(\calB_n,\ZZ)=\Lam^{\vee}$ and $H^2(\calB_n,\ZZ)=\Lam$.
Since $H^2(\calB,\ZZ)=\Lam$ the statement of the Theorem follows.

For general $n\in \calN$ our strategy will be as follows. First we are going to show that
the natural map $H_2(\calB_n,\ZZ)\to H_2(\calB,\ZZ)$ is surjective. Then we going
to show that the natural map $H^2(\calB_n,\CC)\to H^2(\calB,\CC)$ is an isomorphism.
On the other hand, it is shown in \cite{DCLP} that $H^*(\calB_n,\ZZ)$ has no torsion, which
implies the statement of Theorem \ref{simply-laced}.

To prove the surjectivity of the map $H_2(\calB_n,\ZZ)\to H_2(\calB,\ZZ)$ we are going to show that
for every simple coroot $\alp^{\vee}\in \Pi^{\vee}$ there exists an embedding
$\PP^1\to \calB_n$, which has degree $\alp^{\vee}$ in $\calB$. The required surjectivity
follows since $H_2(\calB,\ZZ)=\Lam^{\vee}$ is generated by simple coroots. To show the existence of such
a $\PP^1$ let us denote by $\calP_{\alp}$ the moduli space of subminimal parabolics corresponding to $\alp$.
Then all the $\PP^1$'s of
degree $\alp^{\vee}$ in $\calB$ are fibers of the projection $p_{\alp}:\calB\to\calP_{\alp}$.
Since $n$ is not regular, it lies in the closure of $\calO_{sr}$. Thus there exists a curve $C$ with a point
$c\in C$ and a map $\eta:C\to\calN$ such that $\eta(x)\in \calO_{sr}$ if $x\neq c$ and $\eta(c)=n$.
For any $x\neq c$ the Springer fiber $\calB_{\eta(x)}$ contains unique $\PP^1$ of degree $\alp^{\vee}$.
In other words, we get a map $\phi:C\backslash\{ c\}\to \calP_{\alp}$ such $p_{\alp}^{-1}(\phi(x))\subset
\calB_{\eta(x)}$. Since $\calP_{\alp}$ is proper, the map $\phi$ extends to the whole of $C$ and we are going
to have $p_{\alp}^{-1}(\phi(c))\subset \calB_{\phi(c)}$. Since $\phi(c)=n$, we are done.

To prove that the map $H^2(\calB_n,\CC)\to H^2(\calB,\CC)$ is an isomorphism,
let us consider the Springer sheaf $\Spr=f_*(\CC[\dim \calN])$ (here $f_*$ denotes the derived
direct image in the category of constructible sheaves). It is well-known (cf. \cite{chriss-ginz}) that
it is perverse and semi-simple. Moreover, its irreducible direct summands are all of the form
$\IC(\calE)$ where $\calE$ is a $G$-equivariant local system on a $G$-orbit $\calO\subset \calN$
and $\IC(\calE)$ stands for its Goresky-Macpherson extension to $\calN$. We shall write
$\CC_{\calO}$ for the constant local system on $\calO$ with fiber $\CC$. Also, for
any complex of sheaves $\calF$ on $\calN$ we shall denote by $\calF_n$ the fiber of
$\calF$ at $n$; this is a complex of vector spaces over $\CC$.

Let now $n\in \calN$ which is not regular and not subregular
and let $\calO$ be its $G$-orbit. Then $H^2(\calB_n,\CC)$ is the cohomology of the
fiber of $\Spr$ at $n$ of degree $2-\dim\calN$.
Let $\calO'\subset\calN$ be a $G$-orbit such that $\bar{\calO}_{sr}\supset\bar{\calO'}\supset \calO$ and let
$\calE$ be a local system on $\calO'$. Then we claim that
$H^{2-\dim\calN}(\IC(\calE)_n)$ is $0$ unless $\calO'=\calO_{sr}$. Indeed, this follows from the definition of
IC-sheaves and from the fact that if $\calO'\neq \calO_{sr}$ then $\dim\calO'<\dim\calN-2$.
Thus in order the only contribution to $H^{2-\dim\calN}(\IC(\calE)_n)$ may come from
$\IC(\calE)$ where $\calE$ is a local system either on $\calO_r$ or $\calO_{sr}$.
It is well-known that the only $\calE$ on $\calO_r$ such that $\IC(\calE)$ appears in $\Spr$ is the
constant local system and in this case $\IC(\calE)$ is constant on $\calN$ and therefore does not
contribute anything to $H^{2-\dim\calN}(\IC(\calE)_n)$. Also, if $\calE$ is an irreducible local system on
$\calO_{sr}$ such that $\IC(\calE)$ appears in $\Spr$, then $\calE$ is constant
 and
$\Hom(\IC(\CC_{\calO_{sr}}[\dim\calO_{sr}]),\Spr)=\grt^*$
\footnote{Here we again use the fact that $G$ is simply laced}. On the other hand,
$H^{2-\dim\calN}(\IC(\CC_{\calO_{sr}}))=\CC_{\bar{\calO}_{sr}}$.
\footnote{Here $H^{2-\dim\calN}$ stands for the corresponding cohomology sheaf with respect to the usual
(not perverse) t-structure}
Thus $H^{2-\dim\calN}(\IC(\CC_{\calO_{sr}})_n)=\CC$ and hence $H^{2-\dim\calN}(\Spr_n)=\grt^*=H^2(\calB,\CC)$.

\end{proof}

\end{document}